\newtheorem{defi}{Definition}[section]
\newtheorem{lemma}[defi]{Lemma}
\newtheorem{proposition}[defi]{Proposition}
\newtheorem{corollary}[defi]{Corollary}
\begin{document}


\title{Crossing numbers of composite knots and spatial graphs}
\author{Benjamin Bode}

\address{H H Wills Physics Laboratory, University of Bristol, Bristol BS8 1TL, UK,\newline
present address: Department of Mathematics, Osaka City University, Sugimoto, Sumiyoshi-ku, Osaka 558-8585, Japan}
\email{ben.bode.2013@my.bristol.ac.uk}

\maketitle
\begin{abstract}
We study the minimal crossing number $c(K_{1}\# K_{2})$ of composite knots $K_{1}\# K_{2}$, where $K_1$ and $K_2$ are prime, by relating it to the minimal crossing number of spatial graphs, in particular the $2n$-theta-curve $\theta_{K_{1},K_{2}}^n$ that results from tying $n$ of the edges of the planar embedding of the $2n$-theta-graph into $K_1$ and the remaining $n$ edges into $K_2$. We prove that for large enough $n$ we have $c(\theta_{K_1,K_2}^n)=n(c(K_1)+c(K_2))$. We also formulate additional relations between the crossing numbers of certain spatial graphs that, if satisfied, imply the additivity of the crossing number or at least give a lower bound for $c(K_1\# K_2)$.
\end{abstract}






\section{Introduction}\label{sec:intro}
It is one of the oldest open conjectures in knot theory that the minimal crossing number is additive under the connected sum operation. That is, given two knots $K_{1}$ and $K_{2}$ of minimal crossing numbers $c(K_{1})$ and $c(K_{2})$ respectively, is it true that $c(K_{1}\# K_{2})=c(K_{1}+K_{2})$?
A positive answer to this question would not only help the understanding of this most fundamental knot invariant, but also contradict other conjectures, for example that the percentage of hyperbolic knots among all prime knots of minimal crossing number at most $n$ approaches 100 as $n$ goes to infinity \cite{malyutin}.

By definition of the connected sum (cf. Figure \ref{fig:def}), we have $c(K_{1}\# K_{2})\leq c(K_{1})+c(K_{2})$. Equality is established if both knots are torus knots \cite{diao, torus} or if both are alternating \cite{murasugi, kaufalt, thistle} (or more general adequate \cite{lt88}), but in general it is not even known if $c(K_{1}\# K_{2})\geq c(K_{1})$. The best lower bound that we are aware of, $c(K_{1}\# K_{2})\geq \tfrac{1}{152} (c(K_{1})+c(K_{2}))$, was shown by Lackenby \cite{lackenby}. In fact, he showed the stronger result that $c(K_{1}\# K_{2}\#\ldots\# K_{n})\geq \tfrac{1}{152} \sum_{k=1}^{n}c(K_{i})$ for all knots $K_{i}$.

In this paper we prove relations between the minimal crossing numbers of composite knots and certain spatial graphs, in particular theta-curves. We also formulate additional relations that, if satisfied, imply the additivity of crossing numbers or at least give a lower bound. Checking these conditions is very challenging, but we hope that this work inspires a general method to make progress in the crossing number conjecture.

A theta-curve is an embedding of the theta-graph $\theta$ (cf. Figure \ref{fig:def1}a)) in $S^{3}$, the planar graph consisting of two vertices with three edges between them. Theta-curves are studied up to equivalence under ambient isotopy. Therefore a large number of tools from knot theory applies to the theory of theta-curves as well.
In particular, we can study theta-curves by considering their diagrams, projections in the plane with at most double points at which intersections are transverse.

Thus many diagrammatic invariants that were defined to distinguish knots and links, such as the minimal crossing number, extend to theta-curves. We label the edges of a theta-curve by $x$, $y$ and $z$ as in Figure \ref{fig:def1}a) and denote the numbers of crossings between two strands, by the concatenation of the two corresponding letters. Hence $xy$ denotes the number of crossings between the $x$-strand and the $y$-strand, $xx$ denotes the number of crossings of the $x$-strand with itself and so on.
Theta-curves and their connections to knot theory have been studied before and especially their connections to knotoids has been stressed \cite{kauffman, turaev, knotoids}.

\begin{figure}[tb]
\centering 
\labellist
\pinlabel \textbf{a)} at -10 350
\pinlabel \textbf{b)} at 600 350
\pinlabel \textbf{c)} at 280 -20
\Large
\pinlabel ${\color{blue}K_1}$ at 170 260
\pinlabel ${\color{red}K_2}$ at 400 310
\endlabellist

\begin{subfigure}[t]{0.45\textwidth}
\includegraphics[width=\linewidth]{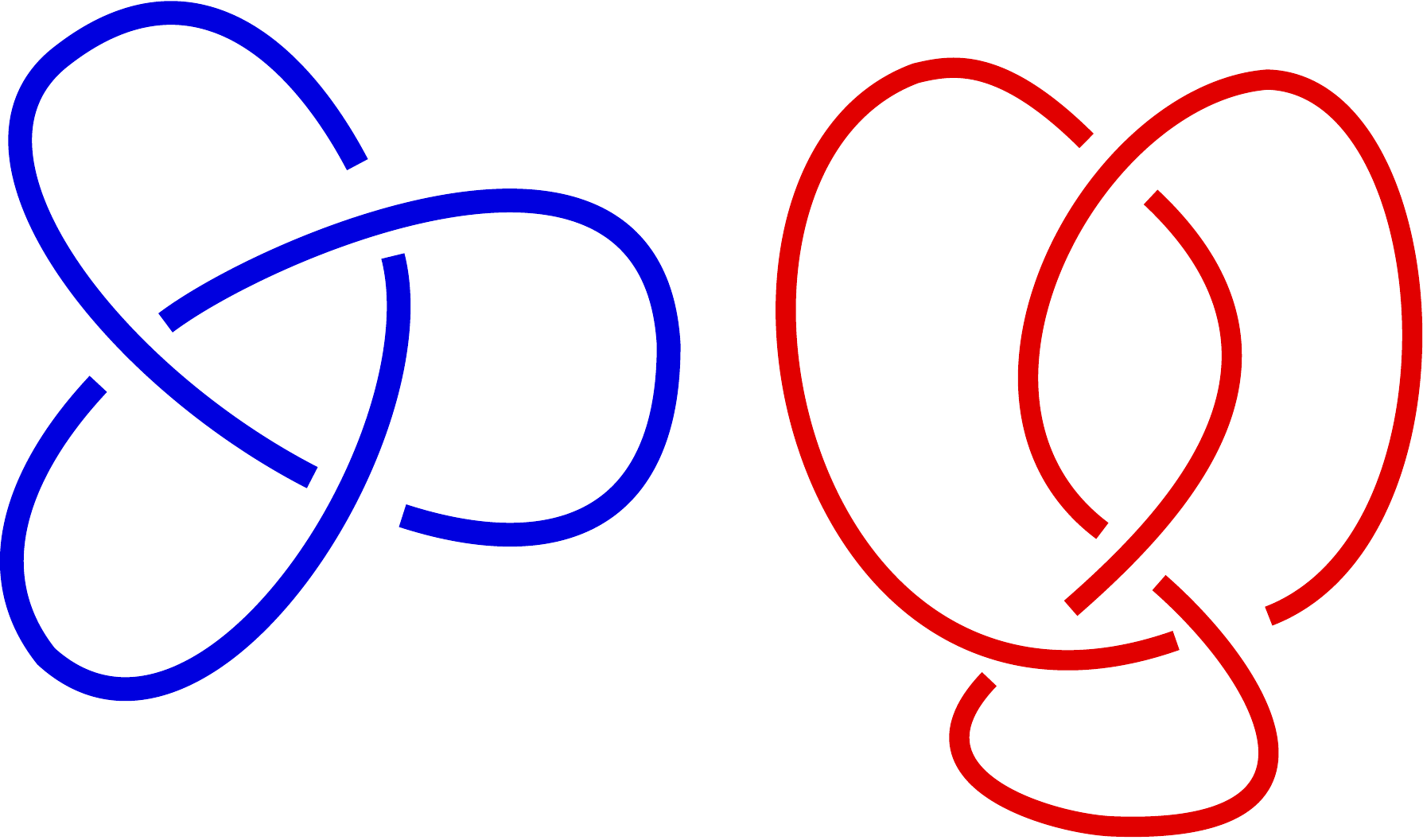}
\end{subfigure}\hfill
\begin{subfigure}[t]{0.45\textwidth}
\includegraphics[width=\linewidth, valign=b]{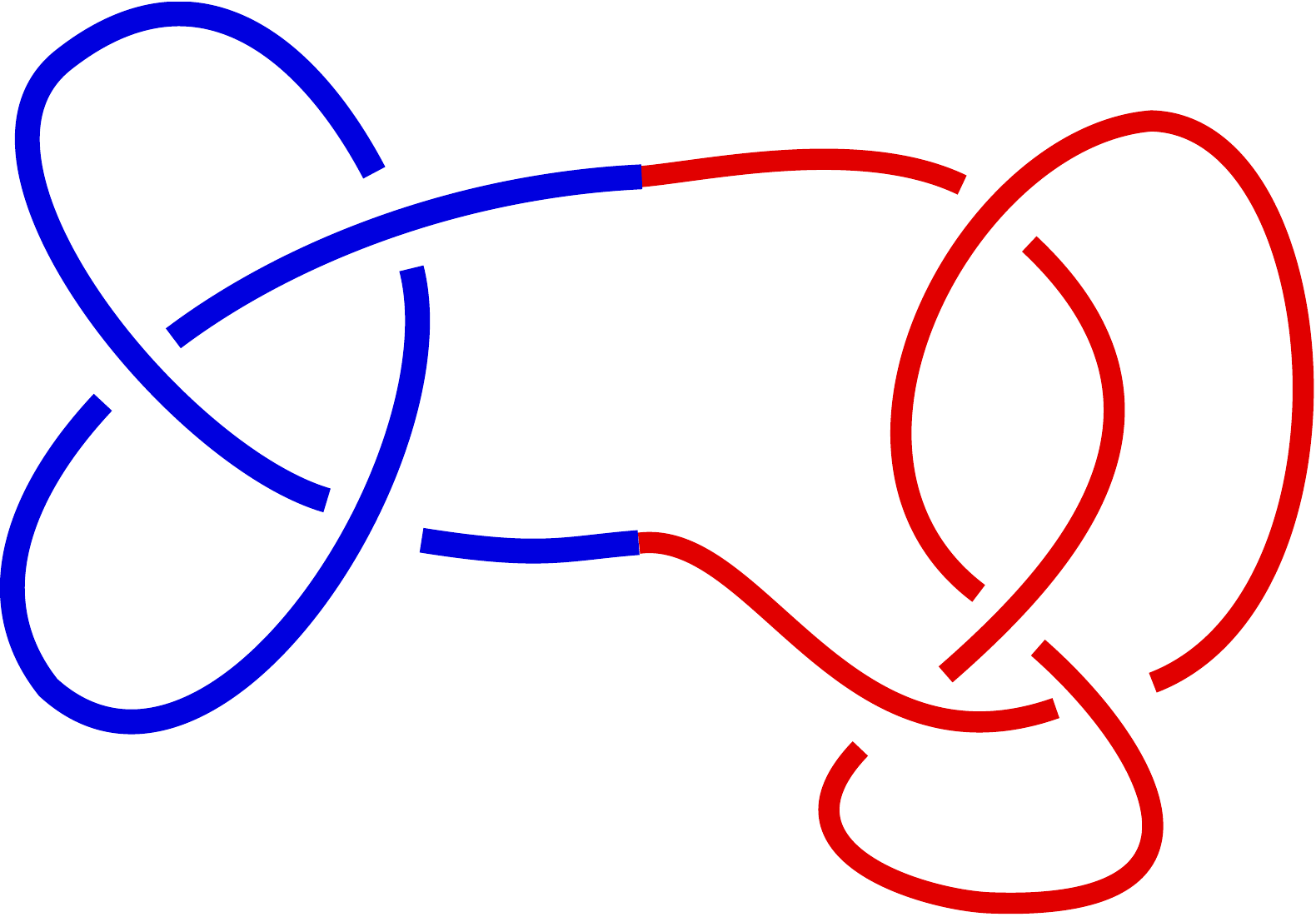}
\end{subfigure}
\begin{subfigure}[t]{0.45\textwidth}
\includegraphics[width=\linewidth]{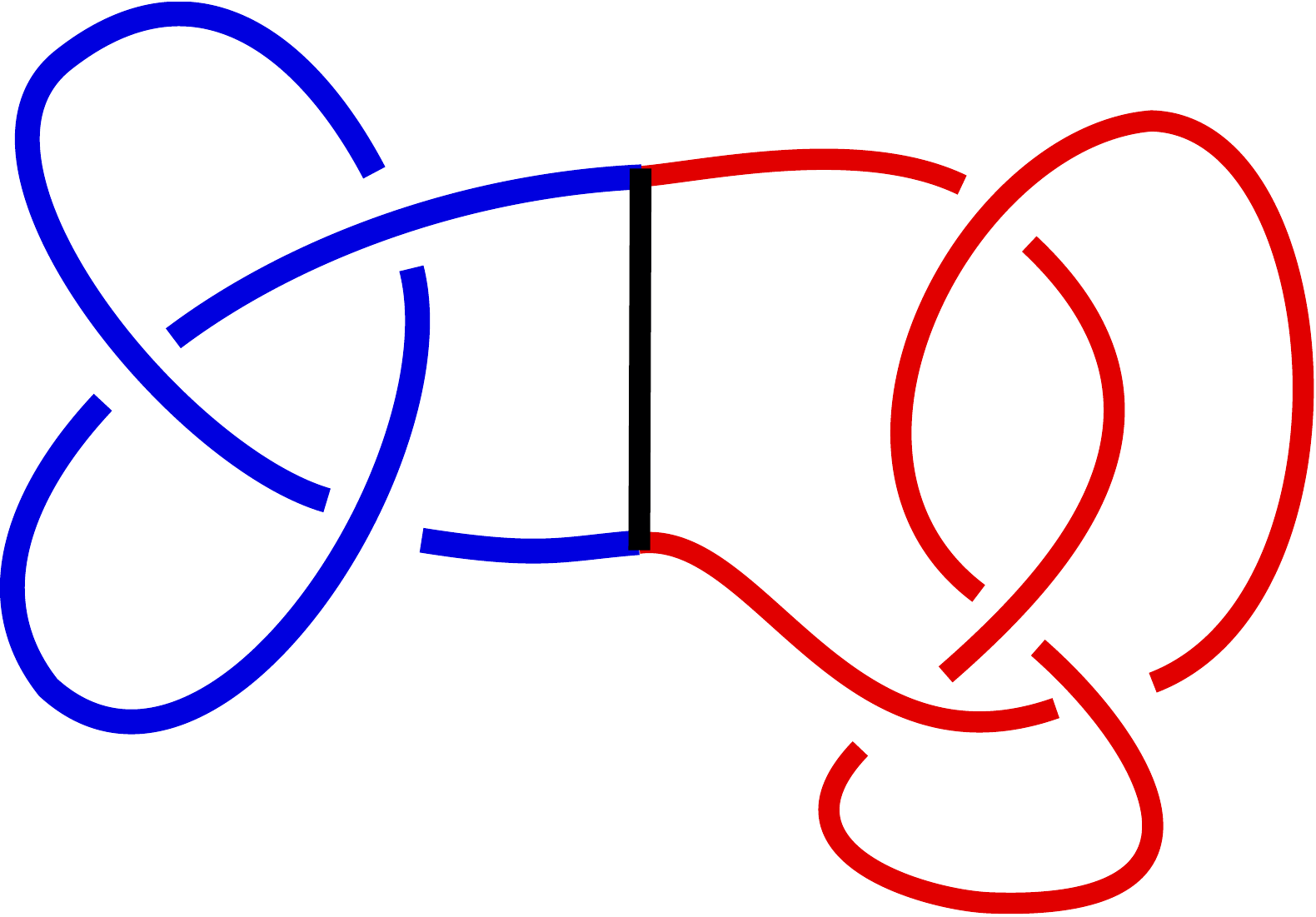}
\end{subfigure}
\caption{ a) Diagrams of a trefoil ($K_1$) and a figure eight knot $(K_2)$ b) A diagram of their connected sum $K_1\# K_2$. c) Adding an extra unknotted arc results in the theta-curve $\theta_{3_1,4_1}$.\label{fig:def}}
\end{figure}

There is a very natural way to associate a theta-curve to a pair of knots $K_{1}$, $K_{2}$, or more precisely to their connected sum $K_{1}\# K_{2}$. Consider the diagram of $K_{1}\# K_{2}$ in Figure \ref{fig:def}b) used to define the connected sum. Then adding an unknotted arc between the two points where $K_{1}$ and $K_{2}$ are glued together results in a theta-curve, denoted by $\theta_{K_1,K_2}$.
Among all theta-curves there is a unique planar embedding and we call the corresponding isotopy type the trivial theta-curve. Then $\theta_{K_{1},K_2}$ is the theta-curve that results from tying $K_{1}$ into the $x$-arc of the trivial theta-curve and $K_2$ in its $z$-arc.

Deleting any of the three edges of a theta-curve leaves a knot, in the case of $\theta_{K_1,K_2}$ we have $x\cup y=K_1$, $y\cup z=K_2$ and $x\cup z=K_1\# K_2$. Note that theta-curves are not uniquely characterised by the knot types of these three knots, their constituent knots. For example for Kinoshita's theta-curve in Figure \ref{fig:def1}b), all pairs of edges form the unknot, but it is not the planar theta-curve shown in Figure \ref{fig:def1}a). 

\begin{figure}\centering
\labellist
\pinlabel \textbf{a)} at 30 240
\pinlabel \textbf{b)} at 340 240
\endlabellist
\begin{subfigure}[t]{0.4\textwidth}
\includegraphics[width=\linewidth]{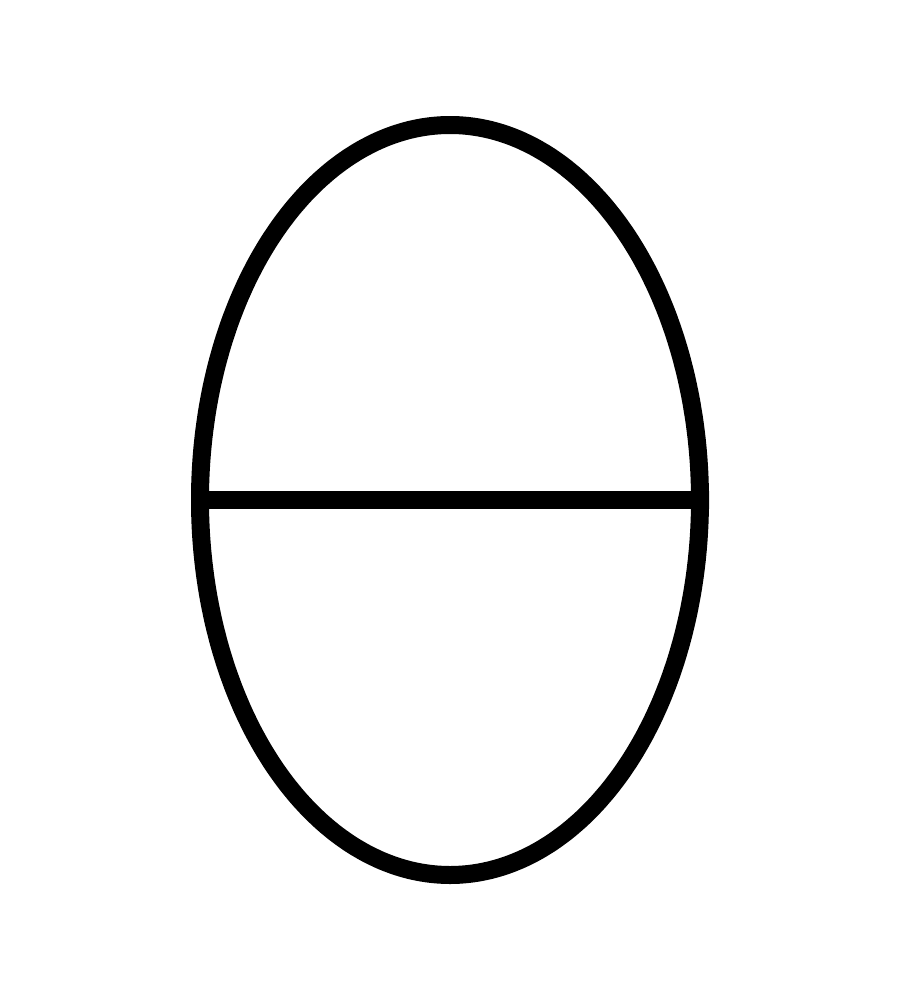}
\end{subfigure}\hfill
\begin{subfigure}[t]{0.5\textwidth}
\includegraphics[width=\linewidth, valign=b]{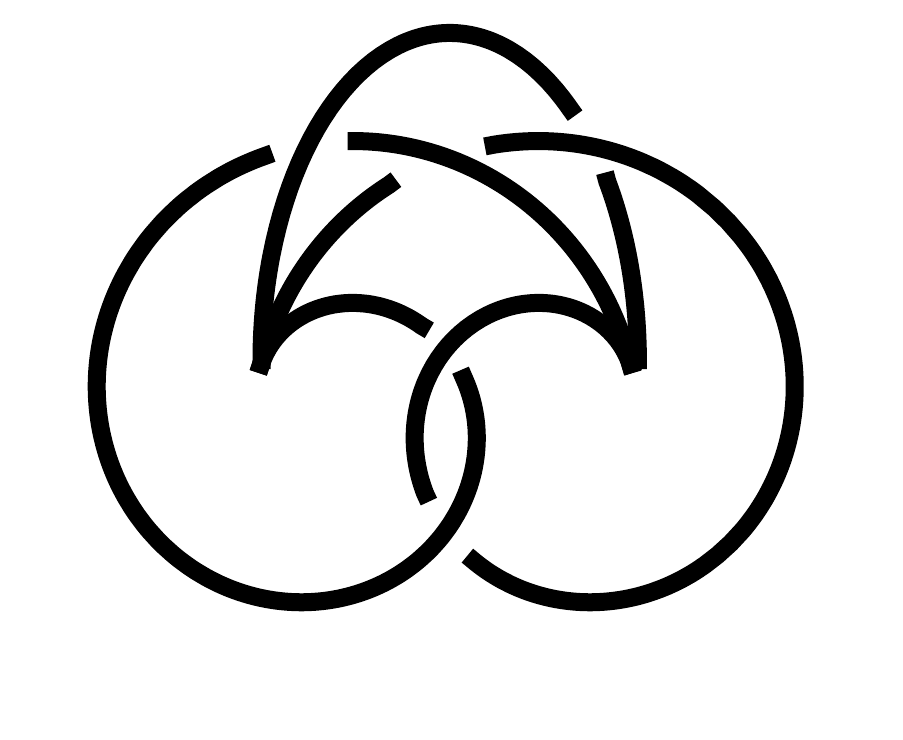}
\end{subfigure}
\caption{a) The theta-graph in its planar embedding. b) Kinoshita's theta-curve. Both theta-curves have the same constituent knots, but are not ambient isotopic.\label{fig:def1}}
\end{figure} 

Since for any diagram of $\theta_{K_1,K_2}$ we have $x\cup z=K_1\# K_2$, it is clear that $c(\theta_{K_1,K_2})\geq c(K_1\# K_2)$ and from its construction we know that $c(\theta_{K_1,K_2})\leq c(K_1)+c(K_2)$.

Although the definition of $\theta_{K_1,K_2}$ makes sense for all knots $K_1$ and $K_2$ and most statements remain true for composite knots, we require $K_1$ and $K_2$ to be prime in the following.

This paper proceeds as follows. In Section \ref{sec:proof} we relate $c(\theta_{K_1,K_2})$ to $c(K_1\# K_2)$.

In Section \ref{sec:higher} we consider theta-curves of higher degree, that is, embeddings of planar graphs with two vertices and $2n$ edges between them. We are particularly interested in embeddings, where $n$ of the edges are tied into $K_1$ and the remaining $n$ edges tied into $K_2$, similar to the case of $\theta_{K_1,K_2}$. Here we show that for large enough $n$ the minimal crossing number of these graphs is $n(c(K_1)+c(K_2))$.

Section \ref{sec:double} discusses a relation between $c(K_1\# K_2)$ and the minimal crossing numbers of the higher degree theta-curves $c(\Omega_{K_1,K_2}^n)$ that are discussed in Section \ref{sec:higher} resulting in the lower bound $c(K_1\# K_2)\geq \frac{1}{n^2}c(\Omega_{K_1,K_2}^n)$.
Thus finding values of $n$ for which $c(\Omega_{K_1,K_2}^n)=n(c(K_1)+c(K_2))$ results in a lower bound of the form $c(K_1\# K_2)\geq \frac{1}{n}(c(K_1)+c(K_2))$.

In Section \ref{sec:outlook} we discuss further spatial graphs whose crossing numbers relate to the crossing numbers of composite knots.

\section{The crossing numbers of theta-curves}
\label{sec:proof}

Consider the theta-curve $\theta_{K_1,K_2}$, which is shown in Figure \ref{fig:def}c).
Since deleting the $y$-arc in any diagram of $\theta_{K_1,K_2}$ results in a diagram of $K_{1}\# K_{2}$, we have the inequality
\begin{equation}
\label{eq:1}
xx+xz+zz\geq c(K_1\# K_2)
\end{equation}
for any diagram of $\theta_{K_1,K_2}$, where we use the notation of Section \ref{sec:intro}.

Similarly, $x\cup y=K_1$ and $y\cup z=K_2$ and we obtain
\begin{align}
\label{eq:ineq}
2c(\theta_{K_1,K_2})&=xx+xz+zz+xx+xy+yy+yy+yz+zz+xy+xz+yz\nonumber\\
&\geq c(K_1\# K_2)+c(K_1)+c(K_2)+xy+xz+yz.
\end{align}

Since $xy$, $yz$ and $xz$ are all non-negative, we obtain the inequality 
\begin{equation}
\label{eq:prop}
2c(\theta_{K_1,K_2})\geq c(K_1\# K_2)+c(K_1)+c(K_2).
\end{equation}

\begin{proposition}
\label{prop1}
The inequality in Equation (\ref{eq:prop}) is an equality if and only if $c(\theta_{K_1,K_2})=c(K_1\# K_2)=c(K_1)+c(K_2)$. 
\end{proposition}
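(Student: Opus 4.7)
The reverse direction follows immediately from substituting $c(\theta_{K_1,K_2})=c(K_1\#K_2)=c(K_1)+c(K_2)$ into (\ref{eq:prop}); the content lies in the forward direction. My plan is to fix a minimal diagram $D$ of $\theta_{K_1,K_2}$ and trace through the derivation of (\ref{eq:ineq}). Equality in (\ref{eq:prop}) forces every summand on the right-hand side of (\ref{eq:ineq}) to be tight, giving $xy=xz=yz=0$ together with
\[
xx+yy=c(K_1),\qquad yy+zz=c(K_2),\qquad xx+zz=c(K_1\#K_2);
\]
that is, no two distinct arcs cross in $D$ and each of the three sub-diagrams obtained by deleting one arc is itself a minimal diagram of the corresponding constituent knot.

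It remains to show $yy=0$, for then $c(\theta_{K_1,K_2})=xx+zz=c(K_1\#K_2)=c(K_1)+c(K_2)$ is immediate. To this end I would exploit primality of $K_1$ via a separating $2$-sphere. Since $xz=yz=0$, a small planar perturbation of the $z$-arc yields an embedded arc from $p$ to $q$ in the complement of $x\cup y$; combining it with a simple arc from $p$ to $q$ through the planar theta-face between $x$ and $y$ produces a simple closed planar curve $\gamma$ meeting $D$ only at the two vertices $p,q$, with $x$ and $y$ on opposite sides of $\gamma$. Vertically lifting $\gamma$ gives a $2$-sphere $S\subset S^3$ meeting $K_1=x\cup y$ transversely in $\{p,q\}$; primality of $K_1$ forces one of the two sub-arcs (in its corresponding ball) to be an unknotted $1$-tangle. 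Replacing that tangle by a straight arc whose projection is simple yields a valid diagram of $K_1$ with zero crossings on that side of $\gamma$, so minimality of the $K_1$-sub-diagram of $D$ forces the associated self-crossing count ($xx$ or $yy$) to have been zero already. The identical argument applied to $K_2=y\cup z$, using $x$ as the guide arc, gives $yy=0$ or $zz=0$.

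The proof now finishes by case analysis: if $yy>0$, both dichotomies force $xx=zz=0$, hence $c(K_1\#K_2)=xx+zz=0$, contradicting the non-triviality of the composite of two prime (hence non-trivial) knots. Therefore $yy=0$, as required. The hard part will be the separating-curve construction together with the tangle-replacement step: one must verify that the non-crossing conditions $xy=xz=yz=0$ can be promoted via the guide arc $z$ into a genuine planar separation of $x$ and $y$, and that replacing the trivial $1$-tangle inside its ball with a $0$-crossing representative genuinely produces a smaller diagram of $K_1$, so that minimality of the induced sub-diagram can be exploited.
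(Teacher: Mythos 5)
Your overall strategy is sound and, at its core, the same as the paper's: pass to a minimal diagram, observe that equality in (\ref{eq:prop}) forces $xy=xz=yz=0$ together with tightness of the three constituent-knot inequalities in (\ref{eq:ineq}), and then combine a planar separation argument with primality of $K_1$ and $K_2$. Your endgame (replace the trivial tangle by a crossingless arc, use minimality of the $x\cup y$ and $y\cup z$ sub-diagrams to force $xx=0$ or $yy=0$ and $yy=0$ or $zz=0$, then rule out $yy>0$ because it would give $c(K_1\# K_2)=xx+zz=0$) is correct and is a reasonable alternative to the paper's, which instead uses Lemma \ref{jordan} to obtain knots $K_1'$, $K_2'$, $K_3'$ with $xx\geq c(K_1')$, $yy\geq c(K_2')$, $zz\geq c(K_3')$, identifies the summands via unique prime factorization, and concludes $c(\theta_{K_1,K_2})\geq c(K_1)+c(K_2)$ directly.

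The genuine gap is exactly the step you flag as ``the hard part'': the existence of the separating curve. Your construction --- a pushoff of $z$ together with ``a simple arc from $p$ to $q$ through the planar theta-face between $x$ and $y$'' --- does not work as stated. First, if $zz>0$ the parallel pushoff of the projection of $z$ is not embedded; one has to extract an embedded sub-arc from a neighbourhood of that projection. Second, and more seriously, an arbitrary minimal diagram has no distinguished ``face between $x$ and $y$'': the existence of any arc from $p$ to $q$ whose interior misses the whole diagram is already a nontrivial claim, and even granted two such disjoint embedded arcs, the simple closed curve they form need not separate the projection of $x$ from that of $y$ --- both could lie on the same side (for instance if the second arc is chosen close to the $z$-pushoff, so that the bounded region between them is an empty sliver). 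You therefore still have to prove that the arcs can be chosen so that the separation actually holds, and that the resulting vertical sphere realizes $K_1$ as the connected sum of the two tangles. This is precisely the content of the paper's Lemma \ref{jordan}, which shows by a Jordan-curve/path-component argument that the two nodes lie in the same component of the complement of an $\epsilon$-neighbourhood of the diagram and deduces the connected-sum decomposition of all three constituent knots. If you import that lemma (or prove the separation statement with comparable care), the rest of your argument goes through.
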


In order to prove Proposition \ref{prop1}, we need the following lemma.

\begin{lemma}
\label{jordan}
Let $\kappa_1$, $\kappa_2$ and $\kappa_3$ be knots and let $D$ be a diagram of a theta-curve $\theta$ where $x\cup z=\kappa_1$, $y\cup z=\kappa_2$ and $x\cup y=\kappa_3$ and no pair of arcs cross each other, i.e. $xy+yz+xz=0$. Then there are knots $K_{1}'$, $K_{2}'$ and $K_{3}'$ such that $\kappa_{1}=K_{1}'\#K_{3}'$, $\kappa_{2}=K_{2}'\#K_{3}'$ and $\kappa_{3}=K_{1}'\#K_{2}'$. Furthermore, $xx\geq c(K_1')$, $yy\geq c(K_2')$, $zz\geq c(K_3')$ and thus $c(D)\geq c(K_{1}')+c(K_{2}')+c(K_{3}')$. 
\end{lemma}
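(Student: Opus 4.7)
The plan is to define each $K_i'$ as the closure, by a simple arc in $S^2$ disjoint from $D$, of the individual arc $i \in \{x, y, z\}$, and then to verify the three connected-sum identities by exhibiting explicit decomposing 2-spheres in $S^3$. First I would set up the closing arcs. Because $xy+yz+xz=0$, the shadow of $D$ in $S^2$ is, topologically, a planar embedding of the theta graph with a self-crossing ``decoration'' contained inside each of the three edges. Its complementary regions form three bigons, each bounded by two of the arcs and incident to both vertices $v_1$ and $v_2$. By the Jordan curve theorem applied inside an appropriate bigon, one obtains for each arc $i$ a simple arc $\alpha_i \subset S^2$ from $v_2$ to $v_1$ disjoint from $D$ except at the endpoints. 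Define $K_i' := (\text{arc } i) \cup \alpha_i$; since the complement of any properly embedded arc in $S^3$ is simply connected, this knot depends only on the arc $i$, not on the specific choice of $\alpha_i$.

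Next I would establish the connected-sum identity $\kappa_3 = x \cup y = K_1' \# K_2'$. Take a thin tubular neighborhood $B_x \subset S^3$ of $x$. Because $y$ and $z$ meet $x$ only at $v_1, v_2$, shrinking $B_x$ makes the 2-sphere $\partial B_x$ meet $\kappa_3$ transversely in exactly two points (one near $v_1$, one near $v_2$, both on $y$), with the whole of $x$ inside $B_x$ and the middle of $y$ outside. Hence $\partial B_x$ decomposes $(S^3, \kappa_3)$ as the union of two 1-tangles whose closures—by trivial arcs in the respective balls—are $K_1'$ and $K_2'$, which is precisely the data of a connected-sum decomposition. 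The identities $\kappa_1 = K_1' \# K_3'$ and $\kappa_2 = K_2' \# K_3'$ follow identically using $\partial B_z$ (or $\partial B_y$) instead.

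The crossing-number bounds are then immediate: the diagram of $K_i'$ obtained by combining arc $i$ of $D$ with $\alpha_i$ has exactly the self-crossings of arc $i$ as its crossings, since $\alpha_i$ is disjoint from $D$; hence $c(K_1') \le xx$, $c(K_2') \le yy$, $c(K_3') \le zz$, and summing yields $c(D) = xx+yy+zz \geq c(K_1') + c(K_2') + c(K_3')$. The main obstacle will be the verification in the second step: one has to check carefully that $\partial B_x$ can be chosen so that the two resulting tangles actually have closures $K_1'$ and $K_2'$ (rather than some other knots depending on how the trivial closing arcs route around $v_1, v_2$), and handle the fact that the sphere meets $\kappa_3$ at the non-smooth corner points where the arcs join. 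This is exactly where the Jordan curve theorem—applied in the planar picture guaranteed by $xy+yz+xz=0$—plays its essential role, letting us isolate each arc in its own ball consistently.
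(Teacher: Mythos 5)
Your overall architecture --- close each arc by a planar arc avoiding $D$, verify the three connected sums with explicit decomposing spheres, and read the crossing bounds off the resulting diagrams --- is the same as the paper's, and your last step (the crossing-number estimates) is fine. The fatal problem is your choice of sphere. If $B_x$ is a thin tubular neighbourhood of the arc $x$ in $S^3$, then the inside tangle $(B_x,\kappa_3\cap B_x)$ is the \emph{trivial} one-string tangle: its string is $x$ together with two radial stubs of $y$, and $x$ is a core of its own tube, hence boundary-parallel in $B_x$. Its closure by any arc on $\partial B_x$ is therefore the unknot, and the decomposition exhibited by $\partial B_x$ is just $\kappa_3=U\#\kappa_3$; no re-routing of the closing arcs can repair this, since all closures of a fixed one-string tangle by boundary arcs are isotopic. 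Indeed, your argument only uses that $x$ and $y$ are disjoint in $S^3$ away from $v_1,v_2$, which holds for an arbitrary splitting of an arbitrary knot into two arcs, so it cannot possibly detect $K_1'$ and $K_2'$. What makes the lemma true is the planarity hypothesis $xy=0$: the correct sphere is ``vertical over'' a planar disk $R$ whose boundary is a boundary circle of a regular neighbourhood of $\mathrm{shadow}(x)\cup\gamma$ in $S^2$, chosen so that $R$ contains the interior of $\mathrm{shadow}(y)$ and no point of $\mathrm{shadow}(x)$; only such a sphere leaves a tangle on the $x$-side that is $K_1'$ minus a boundary-parallel arc. (The paper is itself terse at this point --- it produces the closing arc $\gamma$ and then simply asserts the connected-sum identities --- but its construction keeps everything in the projection plane, which is exactly what your $S^3$-tube throws away.)

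Two smaller gaps. First, once the arcs self-cross, the complementary regions of the shadow of $D$ are not ``three bigons'' (the shadow of a single arc can already cut the plane into many regions), so ``apply the Jordan curve theorem inside a bigon'' is not yet a construction of $\alpha_i$. The existence of a planar arc from $n_1$ to $n_2$ missing $D$ is precisely the nontrivial claim, which the paper proves by showing the two nodes lie in one path component of the complement of the boundary loops of an $\epsilon$-neighbourhood of $D$: a separating loop would be a boundary component of the neighbourhood of a single arc, and all three arcs, each running from $n_1$ to $n_2$, would have to cross it, contradicting $xy=yz=xz=0$. Second, the independence of $K_i'$ from the choice of $\alpha_i$ is true but not because ``the complement of a properly embedded arc is simply connected'' --- closing an arc by different arcs in its complement gives different knots in general. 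The correct reason is planar: the connected set $\mathrm{shadow}(i)\setminus\{n_1,n_2\}$ misses $\alpha_i\cup\alpha_i'$ and hence lies on one side of it, so $\alpha_i$ and $\alpha_i'$ cobound a disk in $S^2$ disjoint from the diagram of arc $i$ and are isotopic in its complement.
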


\begin{proof}
Consider the diagram $D$ as a subset of the Euclidean plane with crossings as double points. Around each of the two nodes $n_{1}$, $n_{2}$ there is a neighbourhood $U(n_{i})$ such that $(U(n_{i})\backslash D)\cup\{n_{i}\}$ is path-connected. For small enough $\epsilon>0$ the boundary of the $\epsilon$-neighbourhood $U_{\epsilon}(D)=\{a\in\mathbb{R}^2\backslash (U(n_{1})\cup U(n_{2})): \min_{b\in D} |a-b|<\epsilon\}$ of $D$ is a collection of loops and  divides $\mathbb{R}^2\backslash (U(n_{1})\cup U(n_{2}))$ into a number of path-connected components.

\begin{figure}[tb]\centering
\labellist
\pinlabel \textbf{a)} at -10 280
\pinlabel \textbf{b)} at 550 280
\pinlabel \textbf{c)} at -10 -20
\pinlabel \textbf{d)} at 550 -20
\endlabellist
\begin{subfigure}[t]{0.2\textwidth}
\includegraphics[width=\linewidth]{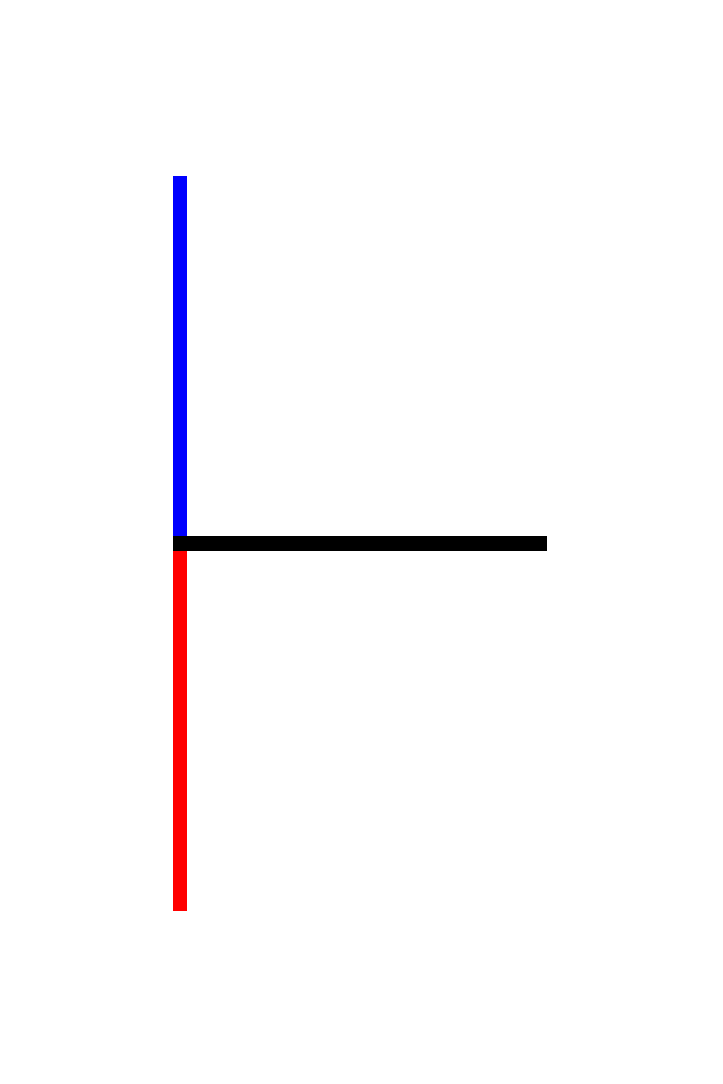}
\end{subfigure}\hfill
\begin{subfigure}[t]{0.45\textwidth}
\includegraphics[width=\linewidth, valign=b]{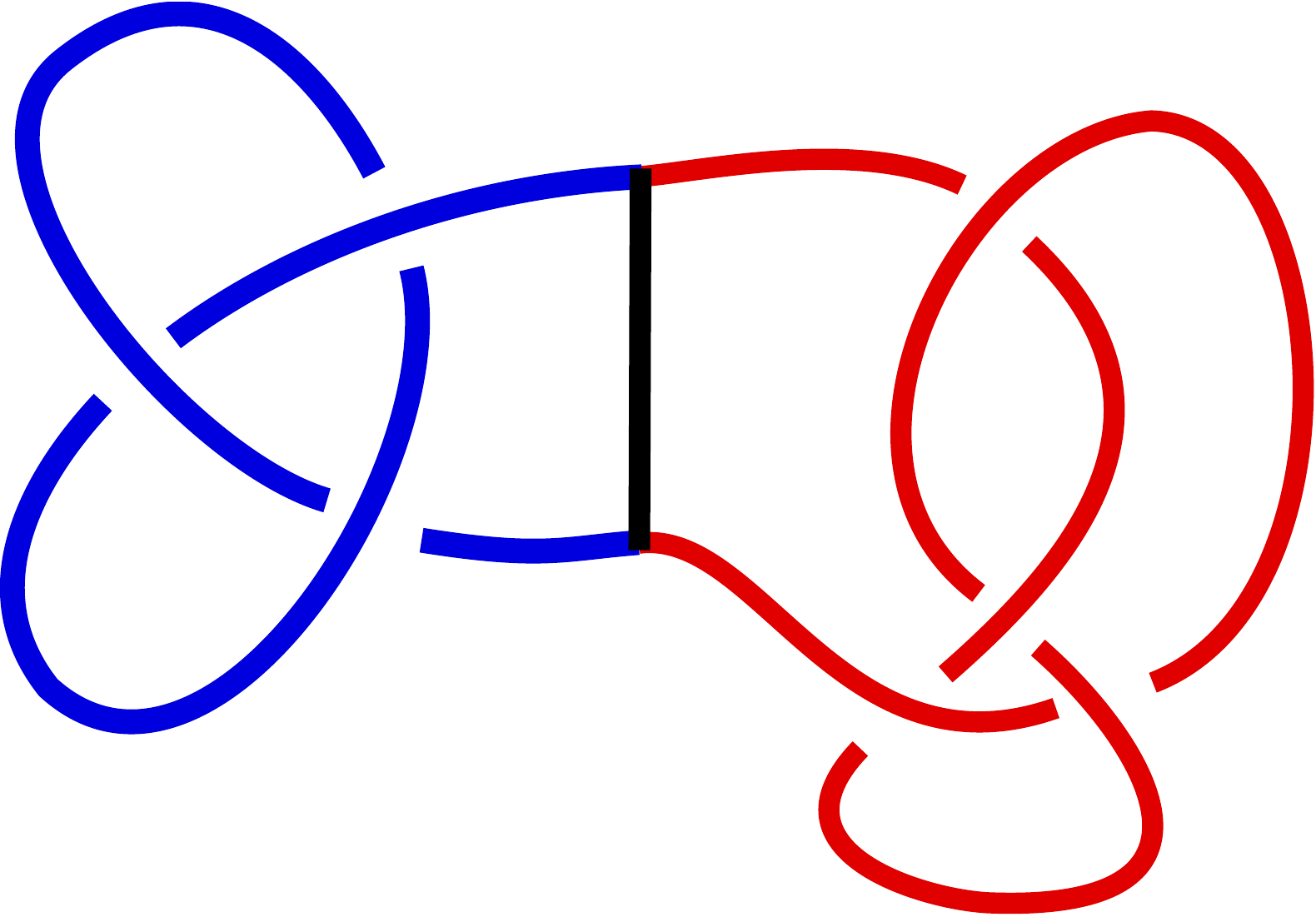}
\end{subfigure}\hfill
\begin{subfigure}[t]{0.45\textwidth}
\includegraphics[width=\linewidth, valign=b]{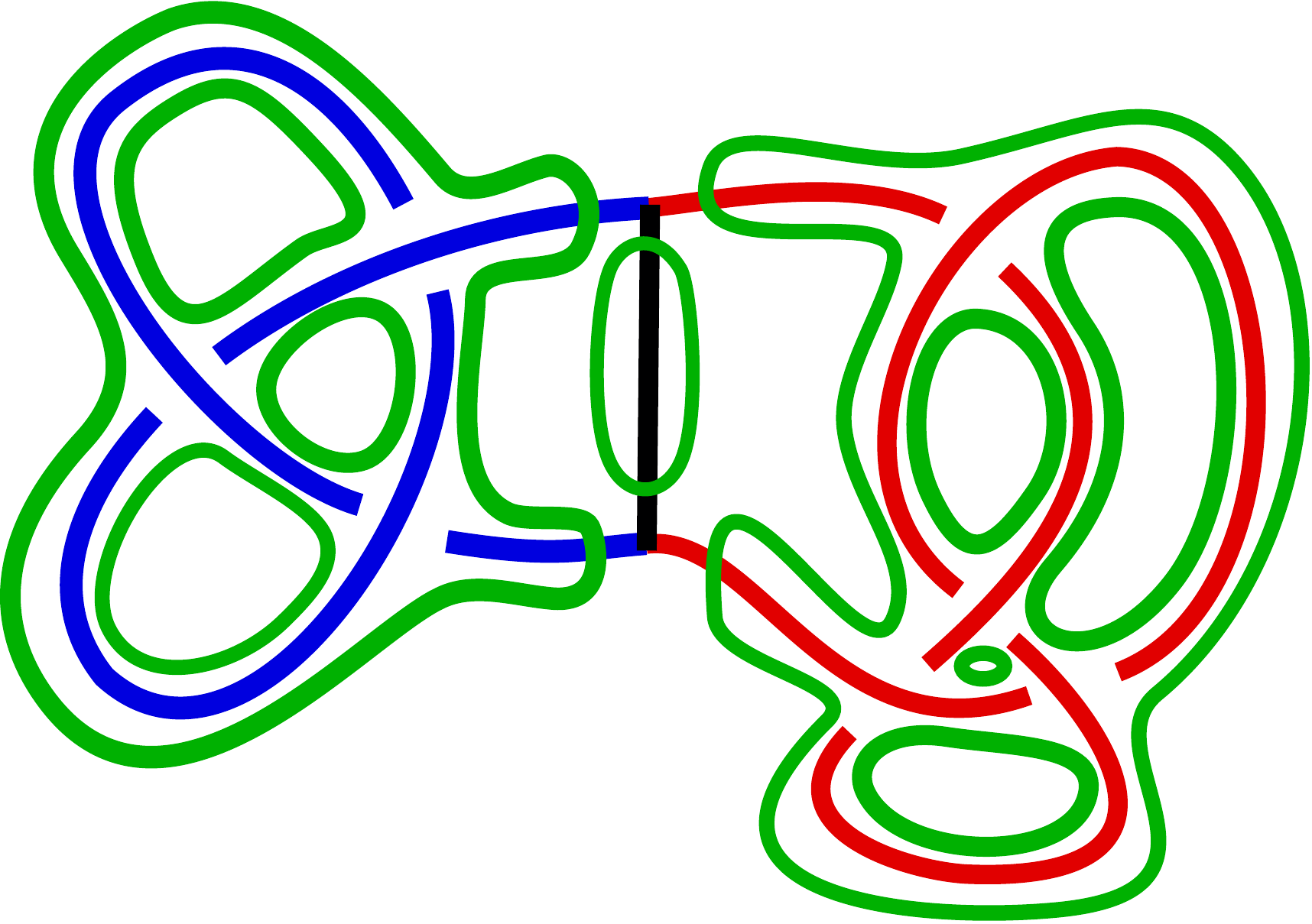}
\end{subfigure}\hfill
\begin{subfigure}[t]{0.45\textwidth}
\includegraphics[width=\linewidth, valign=b]{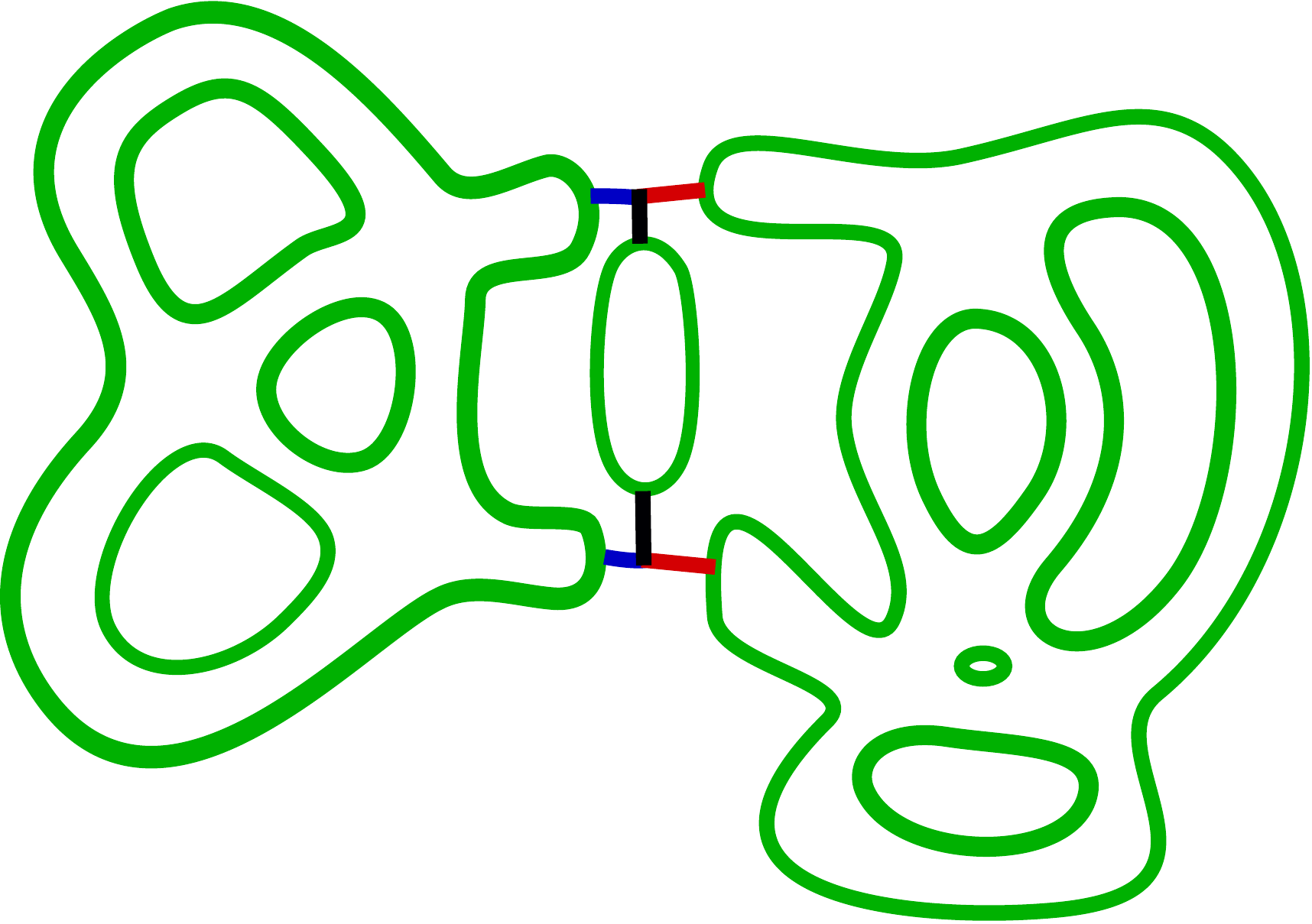}
\end{subfigure}
\caption{a) The diagram $D$ around a node $n_{i}$. b) A diagram of $\theta_{K_1,K_2}$. c) The boundaries of the $\epsilon$-neighbourhood of the diagram divide the plane into path-connected components. d) The two nodes are in the same path-connected component of $P$.\label{fig:jordan}}
\end{figure}

We claim that the two nodes are in the same component of \begin{equation}
P=(\mathbb{R}^2\backslash (\partial U_{\epsilon}(D)\cup (D\cap U(n_{1})\cup(D\cap U(n_{2}))\cup\{n_1\}\cup\{n_2\}
\end{equation}
shown in Figure \ref{fig:jordan}d). Then there is a path $\gamma\subset P$ from $n_{1}$ to $n_{2}$. Since $\gamma$ does not cross $\partial U_{\epsilon}(D)$, $D\cap U(n_1)$ or $D\cap U(n_2)$, it does not have any crossings with $D$ and it can be be chosen to not cross itself.
Call $K_{1}':=x\cup\gamma$, $K_{2}':=y\cup\gamma$ and $K_{3}':=z\cup\gamma$. Since $\gamma$ does not have any crossings with $D$ or with itself,  we have $xx+x\gamma+\gamma\gamma=xx\geq c(K_{1}')$ and similarly $yy\geq K_{2}'$ and $zz\geq K_{3}'$. Note that it follows from the uniqueness of prime decomposition of knots that $xy=xz=yz=0$ implies that $x\cup y=K_{1}'\# K_{2}'$, $y\cup z=K_{2}'\# K_{3}'$ and $x\cup z=K_{1}'\# K_{3}'$.

What is left to show is the claim that the two nodes are in the same path component of $P$.
Assume they are not in the same path component. Then there is a loop $\ell\in U_{\epsilon}(D)$ such that one of the nodes is in the bounded component of $\mathbb{R}^2\backslash \ell$ and the other one is in the unbounded component. Since $xy=yz=xz=0$, the loop $\ell$ is a boundary component of exactly one of $U_{\epsilon}(x)=\{p\in\mathbb{R}^2\backslash (U(n_{1})\cup U(n_{2})): \min_{q\in x} |p-q|<\epsilon\}$, $U_{\epsilon}(y)$ or $U_{\epsilon}(z)$ (defined analogously). But since $x$, $y$ and $z$ are paths from $n_{1}$ to $n_{2}$, all of them must cross $\ell$. Then all of them must also cross the arc associated to $\ell$ (i.e. $x$ if $\ell$ is a boundary component of $U_{\epsilon}(x)$ and so on) contradicting $xy=yz=xz=0$. This proves the claim and finishes the proof of the lemma.
\end{proof}

\begin{proof}[Proof of Proposition \ref{prop1}]
Note that in the case of $\theta=\theta_{K_1,K_2}$, we have $\kappa_1=K_1$, $\kappa_2=K_2$ and $\kappa_3=K_1\# K_2$.
 
We assume that $2c(\theta_{K_1,K_2})=c(K_{1})+c(K_{2})+c(K_{1}\# K_{2})$. Then by Equation \ref{eq:ineq} we have $xy=yz=xz=0$. Now we apply Lemma \ref{jordan} to $\theta_{K_1,K_2}$. We thus have knots $K_1'$, $K_2'$ and $K_3'$ such that $K_1=K_1'\# K_3'$, $K_1\# K_2=K_1'\# K_2'$ and $K_2=K_1'\# K_3'$.
Note that this implies $K_3'=K_1$, $K_2'=O$ and $K_3'=K_2$ and thus $c(\theta_{K_1,K_2})\geq c(K_1)+c(K_2)$. Therefore $c(\theta_{K_1,K_2})=c(K_1)+c(K_2)$ and since we assumed $c(K_{1}\# K_{2})=2c(\theta_{K_1,K_2})-c(K_{1})-c(K_{2})$, we have $c(K_1\# K_2)=c(K_1)+c(K_2)$.

Now assume that $c(\theta_{K_1,K_2})=c(K_1\# K_2)=c(K_1)+c(K_2)$. Then the inequality Equation \ref{eq:ineq} is obviously an equality, which completes the proof of the proposition.\end{proof}

\section{Higher degree theta-curves}
\label{sec:higher}
In the previous section theta-curves are shown to be closely related to composite knots. A next plausible step is to add more arcs between the two nodes.
In this section we consider graphs that have two nodes and $2n$ arcs between them, i.e. $2n$-theta-curves or theta-curves of degree $2n$. We sometimes refer to theta-curves with 3 edges and 2 vertices as \textit{classical theta-curves} or \textit{theta-curves of degree 3}.

Again there is a unique planar embedding of this graph, the trivial theta-curve of degree $2n$ as in Figure \ref{fig:thetan}a). Tying knots into the different arcs is still a well-defined operation and we can thus study the minimal crossing number of the graph $\theta_{K_{1},K_{2}}^{n}$ which is obtained from the trivial theta-curve of order $2n$ by tying $K_{1}$ into $n$ arcs and $K_{2}$ into the remaining $n$ arcs (cf. Figure \ref{fig:thetan}b)). Note that $\theta_{K_1,K_2}^1$ is simply the connected sum $K_{1}\# K_{2}$. 

\begin{figure}[tb]\centering
\labellist
\pinlabel \textbf{a)} at 20 280
\pinlabel \textbf{b)} at 380 280
\Large
\pinlabel ${\color{blue}x_1}$ at 570 310
\pinlabel ${\color{blue}x_2}$ at 480 250
\pinlabel ${\color{red}z_1}$ at 500 170
\pinlabel ${\color{red}z_2}$ at 560 20
\endlabellist
\begin{subfigure}[t]{0.45\textwidth}
\includegraphics[width=\linewidth]{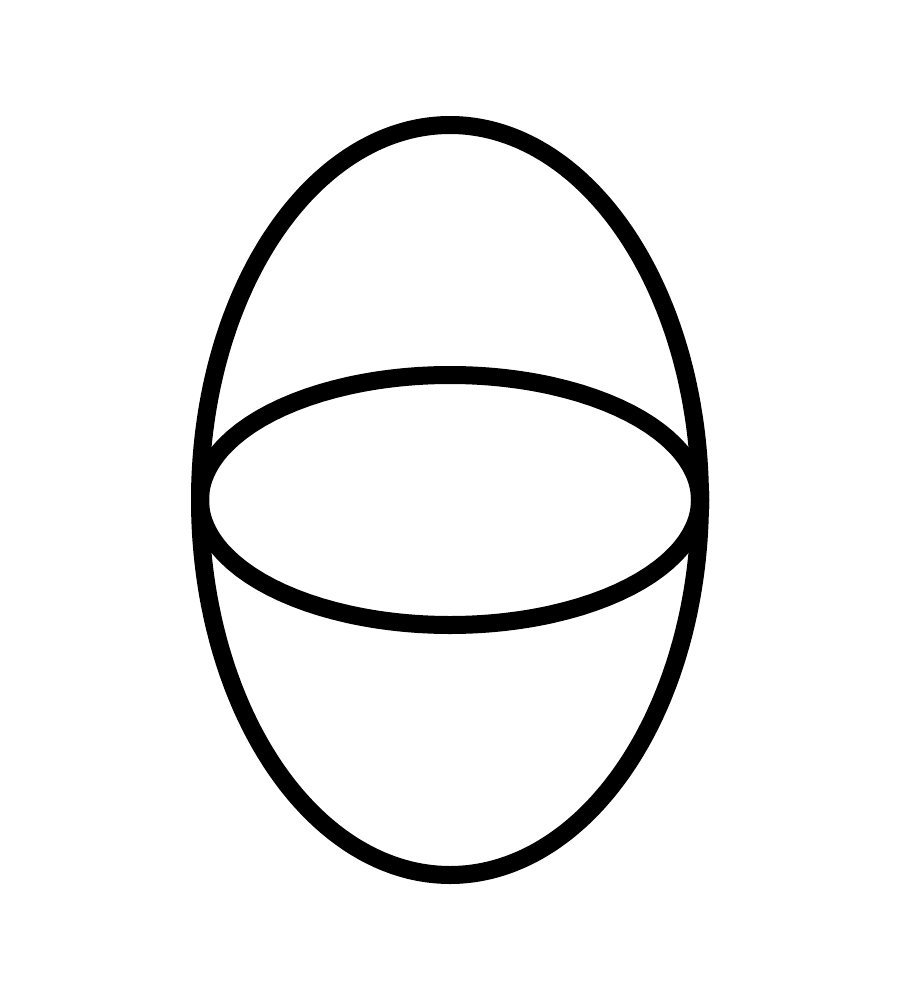}
\end{subfigure}\hfill
\begin{subfigure}[t]{0.3\textwidth}
\includegraphics[width=\linewidth, valign=b]{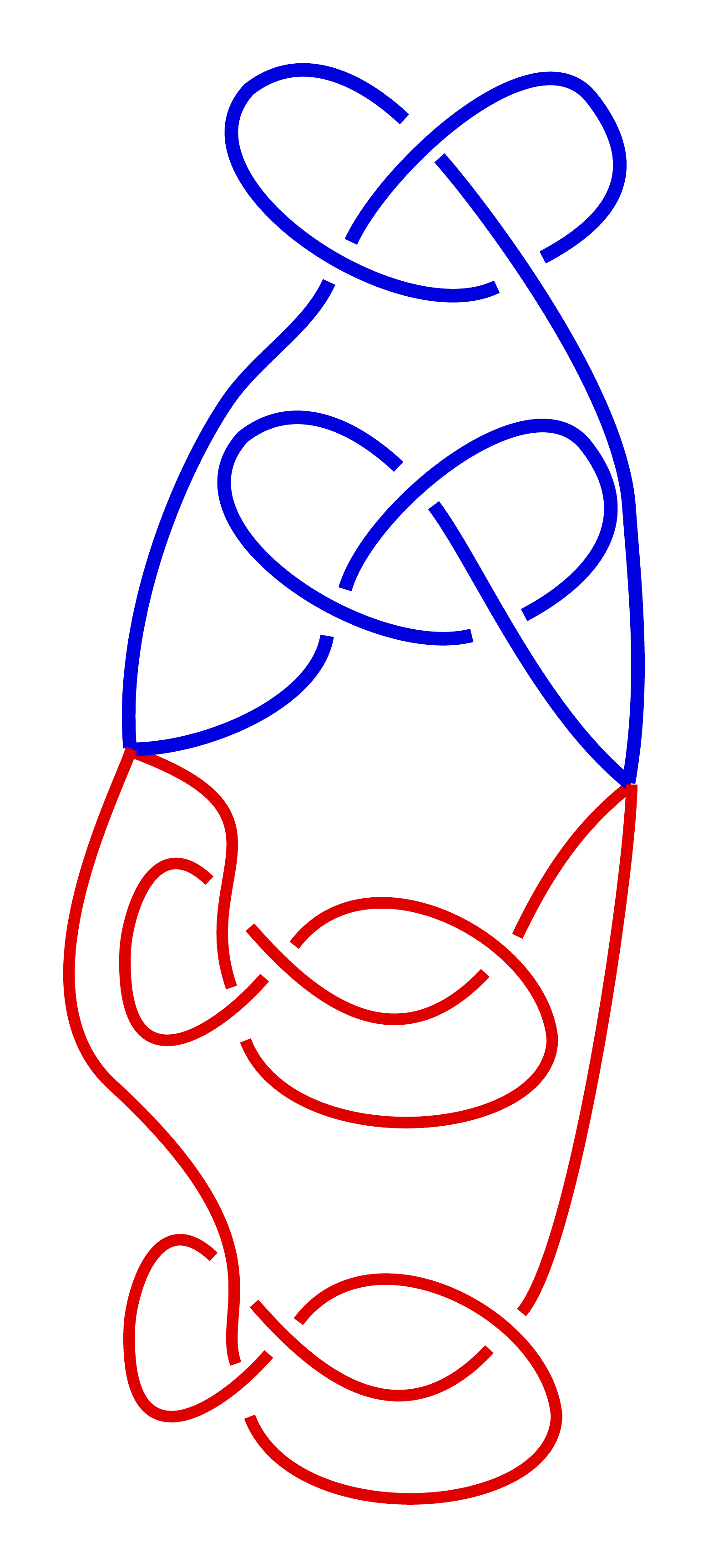}
\end{subfigure}
\caption{a) The planar embedding of the 4-theta-graph. b) A diagram for the ambient isotopy type
$\theta_{K_1=3_1,K_2=4_1}^2$.\label{fig:thetan}}
\end{figure}

We label the edges with a $K_1$ in it by $x_1, \ldots,x_n$ and the edges with a $K_2$ in it by $z_1, \ldots,z_n$. We thus obtain the following constituent knots: $x_i\cup z_j=K_1\# K_2$, $x_i\cup x_j=K_1\# K_1$ and $z_i\cup z_j=K_2\# K_2$ for all distinct $i,j\in\{1,\ldots,n\}$.

We adopt the notation from the previous sections, so $x_i x_j$ denotes the number of times the edge $x_i$ crosses the edge $x_j$. Analogous notations hold for the other edges.  

The first thing that we should note is a direct corollary from Lemma \ref{jordan}.

\begin{corollary}
\label{mixed}
For all knots $K_{1}$, $K_2$ and all $n\in\mathbb{N}$ we have that $c(\theta_{K_1,K_2}^n)\geq nc(K_1\# K_2)$. There is a $n>1$ for which equality holds if and only $c(K_1\# K_2)=c(K_1)+c(K_2)$. 
\end{corollary}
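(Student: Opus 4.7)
The approach is to derive the lower bound by summing the $n^2$ sub-knot inequalities coming from pairs of differently-coloured arcs, and then to analyse the equality case by applying Lemma \ref{jordan} to a sub-theta-curve chosen so that unique prime factorisation pins down its auxiliary knots.

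For the inequality, I would fix a diagram $D$ of $\theta_{K_1,K_2}^n$ and, for each pair $(i,j)\in\{1,\dots,n\}^2$, observe that $x_i\cup z_j$ is a diagram of $K_1\# K_2$, so
\[
x_ix_i + x_iz_j + z_jz_j \;\geq\; c(K_1\# K_2).
\]
Summing over all $n^2$ pairs gives
\[
n\sum_i x_ix_i + \sum_{i,j} x_iz_j + n\sum_j z_jz_j \;\geq\; n^2\, c(K_1\# K_2),
\]
and since $c(D)\geq \sum_i x_ix_i + \sum_j z_jz_j + \sum_{i,j} x_iz_j$ (the omitted terms being the $x_ix_j$ and $z_iz_j$ crossings with $i\neq j$), the left-hand side is at most $n\,c(D)$. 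Hence $c(D)\geq n\,c(K_1\# K_2)$.

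For the equality direction with $n>1$, tracing back through the chain forces $x_iz_j=0$, $x_ix_j=0$ for $i\neq j$, $z_iz_j=0$ for $i\neq j$, and $x_ix_i + z_jz_j = c(K_1\# K_2)$ for every $i,j$. The key step is then to apply Lemma \ref{jordan} to the sub-theta-curve on the arcs $x_1,x_2,z_1$ (which requires $n\geq 2$), whose constituent knots are $x_1\cup x_2 = K_1\# K_1$ and $x_1\cup z_1 = x_2\cup z_1 = K_1\# K_2$. Lemma \ref{jordan} produces knots $K_1',K_2',K_3'$ with
\[
K_1\# K_2 = K_1'\# K_3' = K_2'\# K_3', \qquad K_1\# K_1 = K_1'\# K_2'.
\]
Unique prime decomposition of knots forces $K_1'=K_2'$, and then $K_1'\# K_1'=K_1\# K_1$ with $K_1$ prime forces $K_1'=K_1$; hence also $K_3'=K_2$. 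Therefore $x_1x_1\geq c(K_1)$ and $z_1z_1\geq c(K_2)$, and combining with $x_1x_1 + z_1z_1 = c(K_1\# K_2)$ yields $c(K_1\# K_2)\geq c(K_1)+c(K_2)$; the reverse inequality is standard.

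For the converse, if $c(K_1\# K_2)=c(K_1)+c(K_2)$ then tying a minimal diagram of $K_1$ into each $x_i$-arc and a minimal diagram of $K_2$ into each $z_j$-arc of the trivial $2n$-theta-curve, with no inter-arc crossings, yields a diagram of $\theta_{K_1,K_2}^n$ with exactly $n(c(K_1)+c(K_2))=n\,c(K_1\# K_2)$ crossings, matching the lower bound. I expect the main obstacle to be the equality analysis: one must select a sub-theta-curve whose three constituent knots carry enough redundancy (here two copies of $K_1\# K_2$ together with one copy of $K_1\# K_1$) for unique factorisation to identify the auxiliary knots from Lemma \ref{jordan}, thereby upgrading the raw self-crossing numbers $x_ix_i$ and $z_jz_j$ into sharp bounds involving $c(K_1)$ and $c(K_2)$.
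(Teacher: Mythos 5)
Your proposal is correct and follows essentially the same route as the paper: the lower bound comes from summing the constituent-knot inequality $x_ix_i+x_iz_j+z_jz_j\geq c(K_1\# K_2)$ over all pairs (the paper organises this sum over $n$ cyclic matchings, which is the same computation), and the equality case forces all inter-arc crossings to vanish so that Lemma \ref{jordan} together with unique prime decomposition yields $x_ix_i\geq c(K_1)$ and $z_jz_j\geq c(K_2)$. Your explicit identification of $K_1'=K_1$ and $K_3'=K_2$ via the triple $(x_1,x_2,z_1)$ is just a spelled-out version of the step the paper delegates to the surrounding lemmas, so there is no substantive difference.
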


\begin{proof}
The inequality follows directly from the definition of $\theta_{K_1,K_2}$, in particular from the fact that $x_i\cup z_j=K_1\# K_2$ for all $i$ and $j$.
In other words, for all $k\in\{0,1,\ldots,n-1\}$ we have 
\begin{align}
c(\theta_{K_1,K_2}^n)\geq &\sum_{i=1}^n (x_i x_i+x_i z_{1+(i+k)\text{ mod }n}+z_{1+(i+k)\text{ mod }n} z_{1+(i+k) \text{ mod }n})\nonumber\\
& + \underset{i> j}{\sum_{i,j=1}^n}(x_i x_j+z_i z_j)+\underset{j\neq 1+(i+k)\text{ mod }n)}{\sum_{i,j=1}^n}x_i z_j. 
\end{align} 

Summing over all $k$ and using that $x_i\cup z_j=K_1\# K_2$ for all $i$, $j$, we get
\begin{equation}
nc(\theta_{K_1,K_2})\geq n^2 c(K_1\# K_2)+(n-1)\sum_{i,j=1}^n x_i z_j+n\underset{i> j}{\sum_{i,j=1}^n}(x_i x_j+z_i z_j).
\end{equation}

Thus $c(\theta_{K_1,K_2})\geq nc(K_1\# K_2)$ and if equality holds, then there are no crossings between different edges.

Hence in this case every edge is part of a theta-curve (as in Section \ref{sec:proof}), where none of the strands cross each other. It follows from Lemma \ref{jordan} that each edge crosses itself at least $c(K_{i})$, $i=1,2$ number of times, respectively, meaning $x_i x_i\geq c(K_1)$ and $z_i z_i\geq c(K_2)$ for all $i\in\{1,\ldots,n\}$. Thus $c(\theta_{K_1,K_2}^n)=n(c(K_1)+c(K_2))$ and since $c(\theta_{K_1,K_2}^n)=nc(K_1\# K_2)$ by assumption, we have $c(K_1\# K_2)=c(K_1)+c(K_2)$. 

If $c(K_1\# K_2)=c(K_1)+c(K_2)$, then $c(\theta_{K_1,K_2}^n)\geq n(c(K_1)+c(K_2))$ for all $n\in\mathbb{N}$. Since on the other hand $c(\theta_{K_1,K_2}^n)\leq n(c(K_1)+c(K_2))$ for all $n\in\mathbb{N}$, we obtain $c(\theta_{K_1,K_2}^n)=n(c(K_1)+c(K_2))=nc(K_1\# K_2)$ for all $n\in\mathbb{N}$, which proves the corollary.
\end{proof}

We can also relate the crossing numbers of $\theta_{K_1,K_1}^{n}$ and the connected sum of $n$ copies of $K_1\# K_2$, denoted by $K_1^n\# K_2^n$.

\begin{proposition}
\label{resolve}For all knots $K_1$ and $K_2$ and all $n\in\mathbb{N}$ we have $c(\theta_{K_1,K_2}^n)\geq c(K_1^n\# K_2^n)$. There is one $n$ for which equality holds if and only if $c(K_1\# K_2)=c(K_1)+c(K_2)$.
\end{proposition}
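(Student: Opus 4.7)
The plan is to derive the inequality via a combination of Corollary \ref{mixed} and the trivial upper bound on $c(K_1^n\# K_2^n)$, and then to analyse when the resulting chain of inequalities can collapse. The name of the proposition (``resolve'') points to a more constructive variant that runs through the same bound in a geometric way; I will outline both.

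For the inequality, the short route is as follows. Since connected sum is commutative, $K_1^n\# K_2^n=(K_1\# K_2)^{\# n}$, and concatenating $n$ minimal diagrams of $K_1\# K_2$ by connect-sum gives $c(K_1^n\# K_2^n)\le n\,c(K_1\# K_2)$. Combining this with Corollary \ref{mixed}'s lower bound $c(\theta_{K_1,K_2}^n)\ge n\,c(K_1\# K_2)$ yields the desired bound at once. The geometric alternative is to start from a minimal diagram $D$ of $\theta_{K_1,K_2}^n$ and to \emph{resolve} each of the two $2n$-valent vertices by choosing a planar matching of the incident arc-ends (non-crossing chords in the cyclic order at that vertex); pairing positions $(1,2),(3,4),\ldots,(2n-1,2n)$ at one vertex and $(2,3),(4,5),\ldots,(2n,1)$ at the other forces the combined multigraph to be a single $2n$-cycle. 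Since both matchings are planar, no new crossings are introduced, and ambient-isotoping $\theta_{K_1,K_2}^n$ to its standard planar form shows that this single cycle visits each arc exactly once, producing the connected sum $K_1^n\# K_2^n$ and hence a diagram with only $c(D)$ crossings.

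For the equality characterisation I would assume $c(\theta_{K_1,K_2}^n)=c(K_1^n\# K_2^n)$ for some $n>1$. Then the chain
\[n\,c(K_1\# K_2)\le c(\theta_{K_1,K_2}^n)=c(K_1^n\# K_2^n)\le n\,c(K_1\# K_2)\]
collapses, so in particular $c(\theta_{K_1,K_2}^n)=n\,c(K_1\# K_2)$, and the $n>1$ equality case of Corollary \ref{mixed} forces $c(K_1\# K_2)=c(K_1)+c(K_2)$. The converse is witnessed already at $n=1$, where the identifications $\theta_{K_1,K_2}^1=K_1\# K_2=K_1^1\# K_2^1$ make all three crossing numbers trivially coincide without any hypothesis.

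The main obstacle is essentially interpretive. Since $n=1$ always produces equality, one has to read the ``one $n$'' of the statement as ``some $n\ge 2$'' for the forward implication to carry real content; Corollary \ref{mixed}'s equality case then applies cleanly. If one pursues the resolving route rather than the Corollary-based shortcut, the small combinatorial point to check is that the shift-by-one matchings above remain planar for the cyclic orders of an arbitrary minimal diagram and that tracing the arcs in the standard form really does realise $K_1^n\# K_2^n$ as a connected sum; both are routine but are entirely bypassed by the short Corollary-based argument given first.
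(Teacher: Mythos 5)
Your proof is correct, but your primary route to the inequality differs from the paper's. The paper proves $c(\theta_{K_1,K_2}^n)\geq c(K_1^n\# K_2^n)$ geometrically: starting from an arbitrary diagram of $\theta_{K_1,K_2}^n$, it resolves the two $2n$-valent nodes by a \emph{greedy} rule (always connect the current arc to the nearest not-yet-used arc in the clockwise direction at the node just reached), which automatically yields a single component and no new crossings. You instead obtain the inequality by composing Corollary \ref{mixed} with the subadditivity bound $c(K_1^n\# K_2^n)\leq n\,c(K_1\# K_2)$; this is valid and shorter, and indeed the paper itself writes down exactly this chain when treating the equality case, so the geometric construction is logically redundant for the proposition as stated (its real value is that the node-resolution technique reappears later, in Lemma \ref{general}). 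Your treatment of the equality characterisation matches the paper's, and your observation that the converse is witnessed trivially at $n=1$ (forcing the forward direction to be read with $n>1$ so that Corollary \ref{mixed} applies) is a point the paper glosses over. One caution about your sketched geometric alternative: a \emph{fixed} shift-by-one matching at each node need not produce a single cycle, because the cyclic orders in which the $2n$ arcs meet the two nodes are unrelated permutations of one another; e.g.\ for $n=2$, if the arcs meet $n_1$ in order $1,2,3,4$ and $n_2$ in order $4,1,2,3$, your two matchings coincide and the resolution is a $2$-component link. This is precisely why the paper's rule is greedy (choose among \emph{unused} arcs), which forces connectivity. Since your main argument bypasses the construction entirely, this does not affect the correctness of your proof.
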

\begin{proof}
The key idea here is that we can take any diagram of $\theta_{K_1,K_2}^n$ and resolve the two nodes in a certain way (as in Figure \ref{fig:resolve}) such that we obtain a diagram of $K_1^n\# K_2^n$. We do this as follows. We start at one of the nodes, say $n_{1}$ and pick any arc $s_{1}$. We follow it along the diagram until it reaches the other node $n_{2}$. We then have to pick another arc $s_{2}$ to connect with $s_{1}$. We define $s_{2}$ to be the arc which enters $n_2$ next to $s_1$ in the clockwise direction.

We then follow $s_{2}$ along the diagram until it reaches $n_{1}$ and pick $s_{3}$ to be the arc which among all strands that we have not picked yet enters $n_{1}$ the closest to $s_{2}$ in the clockwise direction. In general, we connect the arc $s_{i}$ to the arc $s_{i+1}$, where $s_{i+1}$ is the arc that among all arcs that are not an element of $\{s_{1},s_{2},\ldots,s_{i}\}$ enters the node $n_{(i\text{ mod }2)+1}$ closest to $s_{i}$ in the clockwise direction.

With this rule, we obtain only one connected component, i.e. the diagram of a knot. It is clear, for example through induction on $n$, that the knot type of this diagram is $K_1^n\# K_2^n$.

Assume now that there is an $n$ such that $c(\theta_{K_1,K_2}^n)=c(K_1^n\# K_2^n)$. Note that we have $c(K_1^n\# K_2^n)\leq n c(K_1\# K_2)\leq c(\theta_{K_1,K_2}^n)$. It then follows from Corollary \ref{mixed} that $c(\theta_{K_1,K_2}^n)=c(K_1^n\# K_2^n)=n c(K_1\# K_2)$ implies $c(K_1\# K_2)=c(K_1\# K_2)$.

If $c(K_1\# K_2)=c(K_1)+c(K_2)$, then Equation \ref{eq:1} implies that $c(\theta_{K_1,K_2})\geq c(K_1)+c(K_2)$. However, we know from the definition of $\theta_{K_1,K_2}$ that $c(\theta_{K_1,K_2})\leq c(K_1)+c(K_2)$ and therefore $c(\theta_{K_1,K_2})=c(K_1)+c(K_2)=c(K_1\# K_2)$. Since $\theta_{K_1,K_2}=\theta_{K_1,K_2}^1$, this proves the proposition. 
\end{proof}

\begin{figure}[tb]\centering
\labellist
\pinlabel \textbf{a)} at 2 250
\pinlabel \textbf{b)} at 430 250
\endlabellist
\begin{subfigure}[t]{0.45\textwidth}
\includegraphics[width=0.45\linewidth]{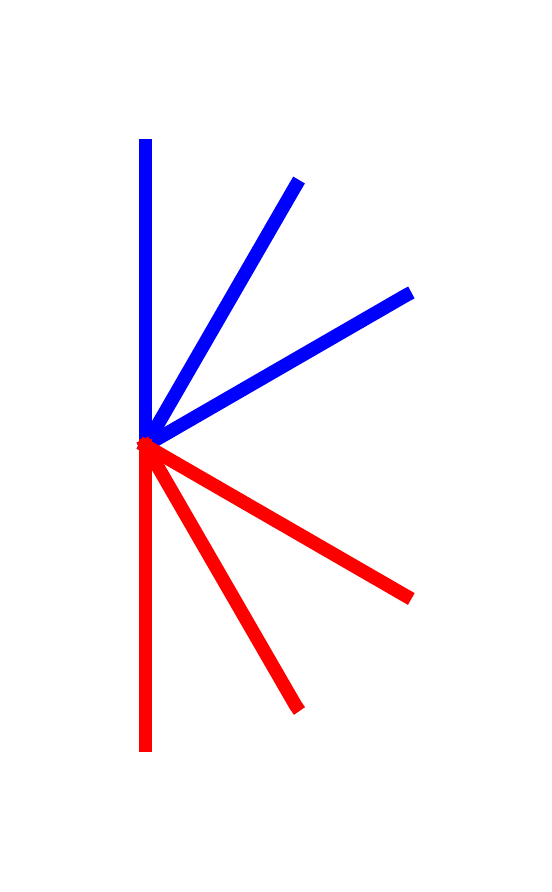}
\includegraphics[width=0.45\linewidth, valign=b]{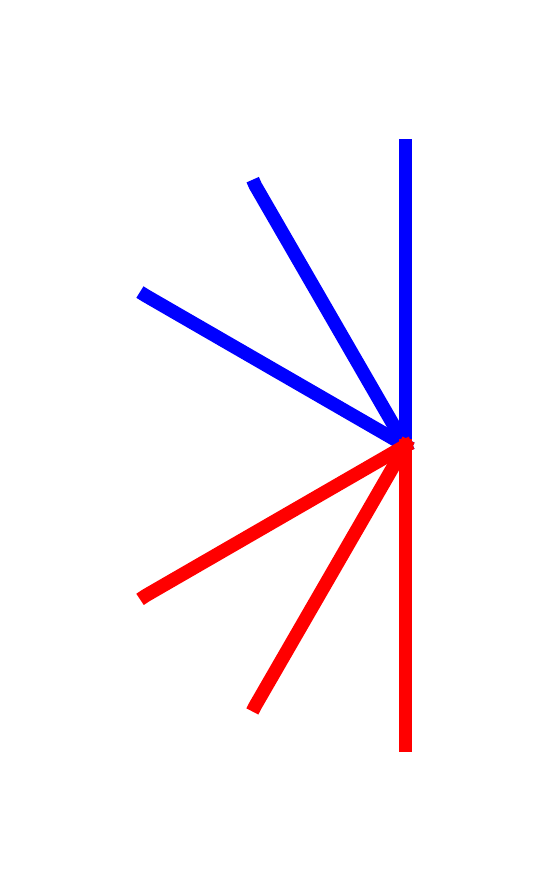}
\end{subfigure}\hfill
\begin{subfigure}[t]{0.45\textwidth}
\includegraphics[width=0.45\linewidth, valign=b]{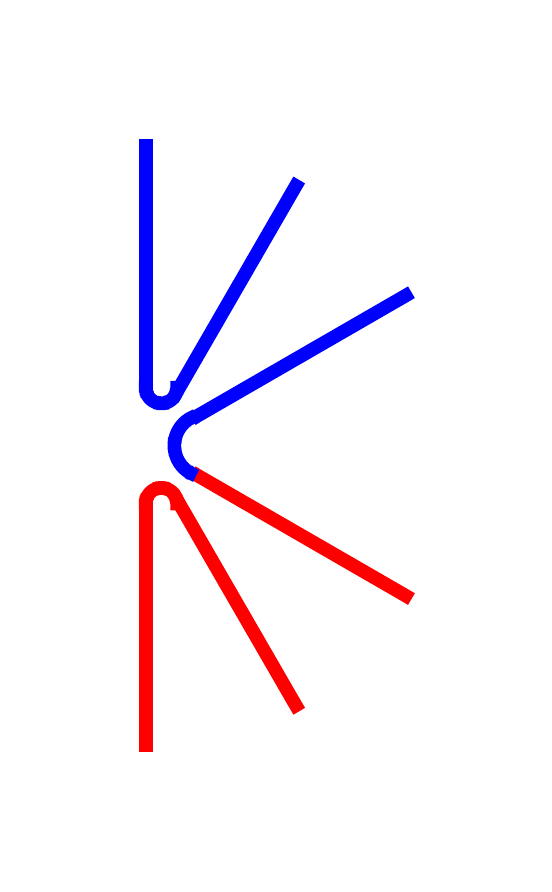}
\includegraphics[width=0.45\linewidth, valign=b]{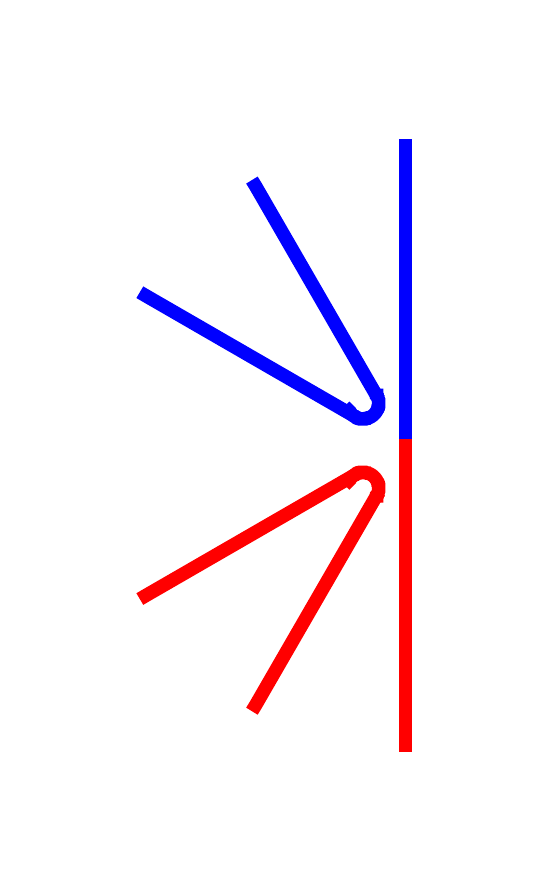}
\end{subfigure}
\caption{a) Neighbourhoods of the nodes in a diagram of $\theta_{K_1,K_2}^n$. b) The nodes can be resolved to result in a diagram of $K_1^n\# K_2^n$.\label{fig:resolve}}
\end{figure}

The graph $\theta_{K_{1},K_{2}}^{n}$ is an element of a special class of theta-curves of degree $2n$. We define $\Omega_{K_{1},K_{2}}^{n}$ to be the set of theta-curves of degree $2n$ where we can colour $n$ arcs blue and the remaining $n$ arcs red, such that the union of any blue arc with any red arc is $K_{1}\# K_{2}$ and the union of any two arcs of the same colour is neither the unknot nor $K_1\# K_2\# K_1\# K_2$. Obviously $\theta_{K_{1},K_{2}}\in\Omega_{K_{1},K_{2}}^{n}$.

In order to keep notation consistent with that of the discussion of $\theta_{K_1,K_2}$, we label the blue edges by $x_1,\ldots,x_n$ and the $n$ red edges by $z_1,\ldots,z_n$.

We are now interested in $c(\Omega_{K_{1},K_{2}}^{n})=\min \{c(\theta):\theta\in\Omega_{K_{1},K_{2}}^{n}\}$. By the above we have $c(\Omega_{K_{1},K_{2}}^{n})\leq c(\theta_{K_{1},K_{2}}^{n})\leq n(c(K_{1})+c(K_{2}))$. 
We want to show that for large enough $n$ this inequality is actually an equality. The idea here is that any three arcs of a theta-curve of order $2n$ form a `classical' theta-curve as in the previous section and we either have an intersection between a pair of arcs or the crossing number of the theta-curve is in some sense large. However, as $n$ grows, the number of pairs of arcs grows more quickly than $n(c(K_{1})+c(K_{2}))$.

We need several lemmas.

\begin{lemma}
\label{decomposition}
Let $\theta$ be a theta-curve with $x\cup z=K_{1}\# K_{2}$ and $y\cup z=K_{1}\# K_{2}$. If no pair of arcs cross each other and $x\cup y$ is neither the unknot nor $K_1\# K_2\# K_1\# K_2$, then $xx\geq c(K_{1})$, $zz\geq c(K_{2})$ and $yy\geq c(K_{1})$ or $xx\geq c(K_{2})$, $zz\geq c(K_{1})$ and $yy\geq c(K_{2})$.
\end{lemma}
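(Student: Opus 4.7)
The plan is to apply Lemma \ref{jordan} directly and then use the uniqueness of prime decomposition of knots, combined with the primality of $K_1$ and $K_2$ (assumed throughout the paper), to enumerate the possible identifications of the three factor knots produced by the lemma.

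First I would invoke Lemma \ref{jordan} on $\theta$, using $\kappa_1=x\cup z=K_1\# K_2$, $\kappa_2=y\cup z=K_1\# K_2$, and $\kappa_3=x\cup y$. Since $xy+yz+xz=0$ by hypothesis, the lemma furnishes knots $K_1',K_2',K_3'$ with
\[
K_1\# K_2 = K_1'\# K_3',\qquad K_1\# K_2 = K_2'\# K_3',\qquad x\cup y = K_1'\# K_2',
\]
together with the crossing bounds $xx\ge c(K_1')$, $yy\ge c(K_2')$, $zz\ge c(K_3')$.

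Next I would enumerate the possibilities for $K_3'$. By uniqueness of prime decomposition and primality of $K_1,K_2$, the first equation forces $K_3'\in\{O,\,K_1,\,K_2,\,K_1\# K_2\}$ (with $K_1'$ determined correspondingly), and the second equation similarly determines $K_2'$ from the same choice of $K_3'$. The four cases give:
\begin{itemize}
\item $K_3'=O$: $K_1'=K_2'=K_1\# K_2$, so $x\cup y=K_1\# K_2\# K_1\# K_2$, excluded by hypothesis;
\item $K_3'=K_1\# K_2$: $K_1'=K_2'=O$, so $x\cup y$ is the unknot, also excluded;
\item $K_3'=K_1$: $K_1'=K_2'=K_2$, giving $xx\ge c(K_2)$, $yy\ge c(K_2)$, $zz\ge c(K_1)$;
\item $K_3'=K_2$: $K_1'=K_2'=K_1$, giving $xx\ge c(K_1)$, $yy\ge c(K_1)$, $zz\ge c(K_2)$.
\end{itemize}
The last two cases are precisely the two alternatives in the statement of the lemma, so this completes the proof.

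The only potentially delicate point is the case analysis via unique prime decomposition, but since $K_1$ and $K_2$ are assumed prime throughout the paper, the decomposition $K_1\# K_2 = A\# B$ really does force $\{A,B\}$ to be one of the four pairs listed above (up to swapping the two factors, which is harmless because the equations are symmetric in $K_1',K_3'$ only through the role each plays). No other step requires more than bookkeeping.
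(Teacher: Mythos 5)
Your proposal is correct and follows essentially the same route as the paper: invoke Lemma \ref{jordan}, then use unique prime decomposition together with the primality of $K_1$ and $K_2$ to run a four-way case analysis, discarding the two cases excluded by hypothesis. The only cosmetic difference is that you enumerate the possibilities for $K_3'$ while the paper enumerates those for $K_1'$; the resulting cases and conclusions are identical.
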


\begin{proof}
By Lemma \ref{jordan} there are knots $K_{1}'$, $K_{2}'$ and $K_{3}'$ such that $K_{1}'\# K_{3}'=K_{2}'\# K_{3}'=K_{1}\# K_{2}$ and $K_{1}'\# K_{2}'$ is neither the unknot nor $K_1\# K_1\# K_2\# K_2$.

Since the prime decomposition of knots is unique and both $K_1$ and $K_2$ are prime, $K_{1}'$ is either $K_{1}$, $K_{2}$, $K_{1}\# K_{2}$ or the unknot.
If it is the unknot, then $K_{3}'=K_{1}\# K_{2}$. But then $K_{2}'$ must also be the unknot and so $K_{1}'\# K_{2}'$ is the unknot, contradicting the assumption.

If $K_{1}'=K_{1}\# K_{2}$, then $K_{3}'$ is the unknot and hence $K_{2}'=K_1\# K_2$. Thus $K_{1}'\# K_{2}'=K_1\# K_2 \# K_1\# K_2$, again contradicting the assumption. 

If $K_{1}'=K_{1}$, then $K_{3}'=K_{2}$ and therefore $K_{2}'=K_{1}$ and so $xx\geq c(K_{1})$, $yy\geq c(K_{1})$ and $zz\geq c(K_{2})$ by Lemma \ref{jordan}.

If $K_{1}'=K_{2}$, then $K_{3}'=K_{1}$ and hence $K_{2}'=K_{2}$. It follows that $xx\geq c(K_{2})$, $yy\geq c(K_{2})$ and $zz\geq c(K_{1})$ by Lemma \ref{jordan}.
\end{proof}

This establishes the idea that if a theta-curve of degree 3 that is a subgraph of the diagram in question consists of three arcs that do not cross each other (only themselves), then its crossing numbers is comparatively large. We are thus interested in how many crossings between different edges are required to rule out the existence of any such subgraph.

\begin{figure}[tb]\centering
\labellist
\pinlabel \textbf{a)} at 2 850
\pinlabel \textbf{b)} at 850 850
\Large
\pinlabel ${\color{blue}x_1}$ at 420 820
\pinlabel ${\color{blue}x_2}$ at 330 650
\pinlabel ${\color{red}z_1}$ at 230 450
\pinlabel ${\color{red}z_2}$ at 30 180
\pinlabel ${\color{blue}x_1}$ at 930 690
\pinlabel ${\color{blue}x_2}$ at 1530 690
\pinlabel ${\color{red}z_1}$ at 950 180
\pinlabel ${\color{red}z_2}$ at 1530 180
\endlabellist
\begin{subfigure}[t]{0.3\textwidth}
\includegraphics[width=\linewidth]{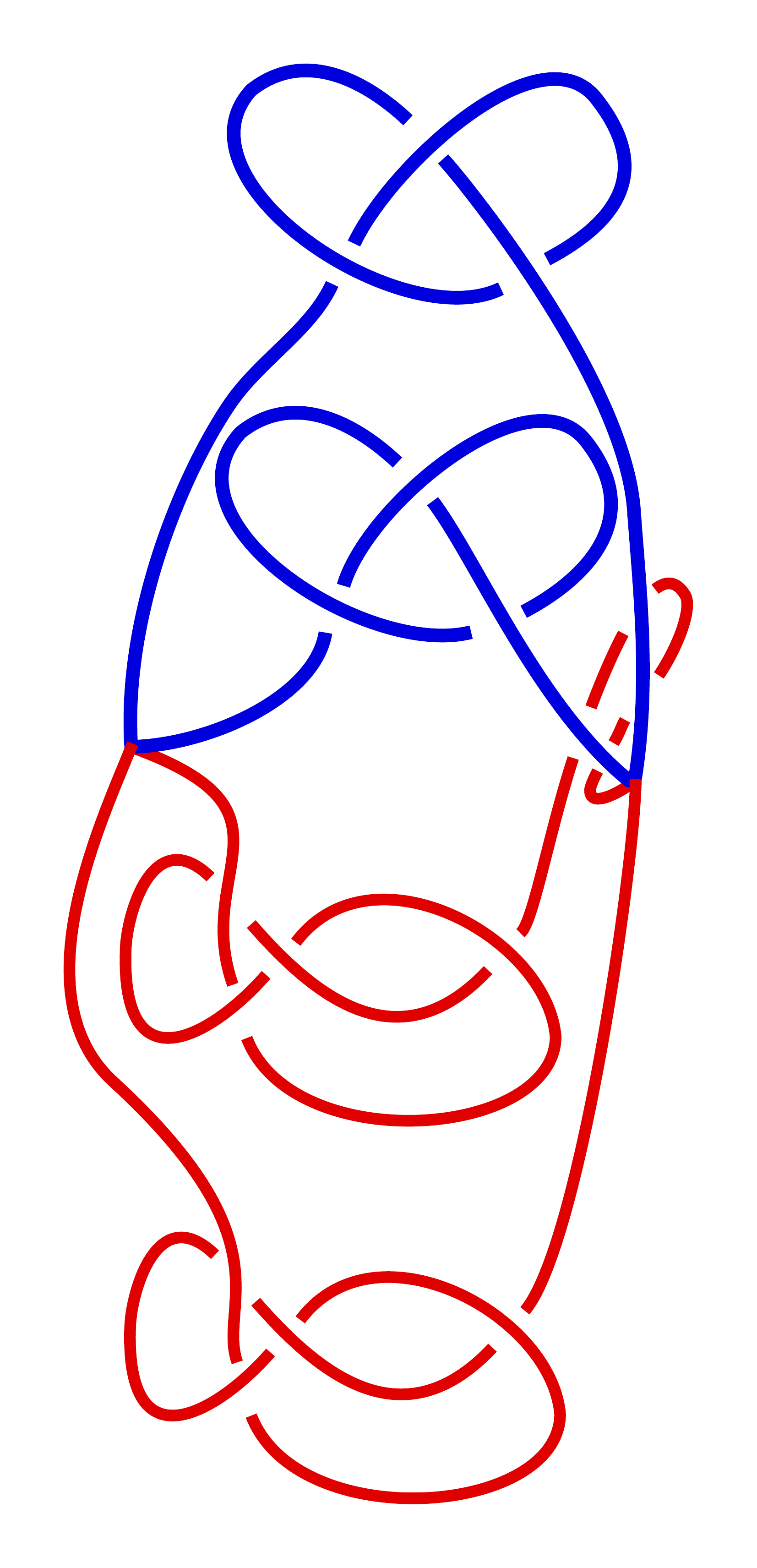}
\end{subfigure}\hfill
\begin{subfigure}[t]{0.45\textwidth}
\vspace{-7cm}
\includegraphics[width=\linewidth]{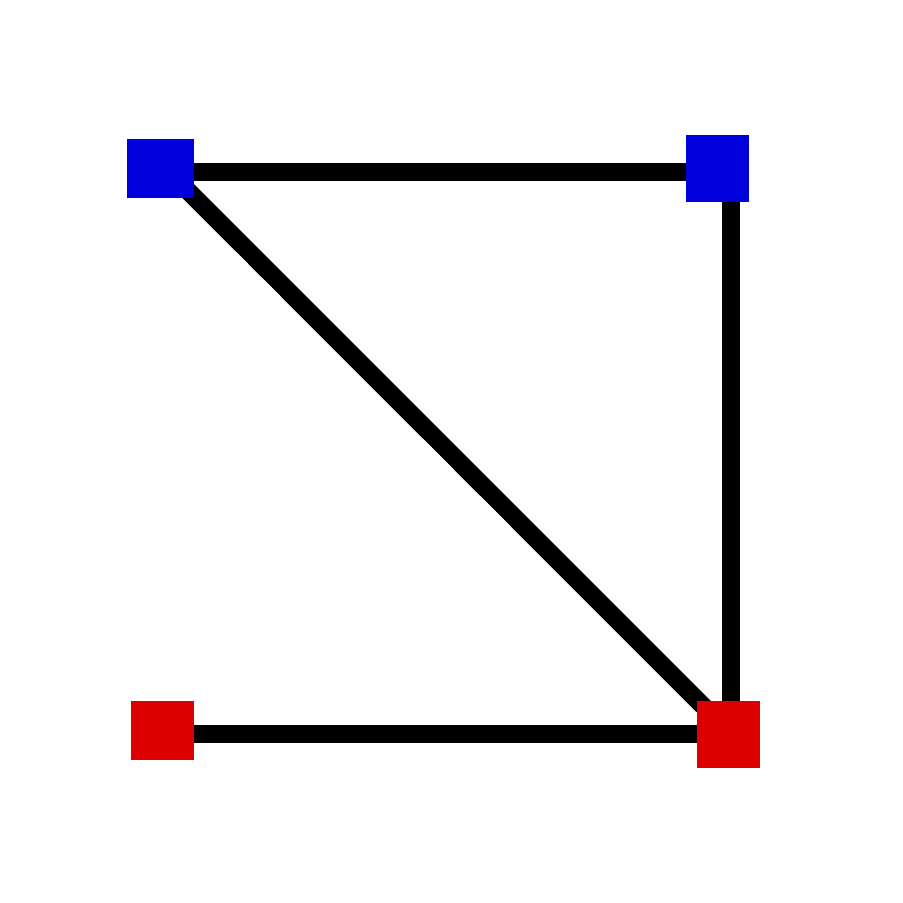}
\end{subfigure}
\caption{a) A diagram of $\theta_{K_1=3_1,K_2=4_1}^2$. b) The corresponding $\Gamma$-graph. The vertices $x_1$, $x_2$ and $z_2$ form a bicoloured triangle.\label{fig:gamma}}
\end{figure}

We can associate a graph $\Gamma$, or $\Gamma(D)$, to any diagram $D$ of a theta-curve $\theta\in\Omega_{K_1,K_2}^n$ that consists of $2n$ vertices, one for each edge of $D$, and an edge between two vertices if the corresponding edges in $D$ do not cross each other. Hence there is an edge between the vertices corresponding to $x_i$ and $z_j$ if and only if $x_i z_j=0$. Similarly, for $x_i$ and $x_j$ or $z_i$ and $z_j$.

We call a triangle in $\Gamma$ \textit{bicoloured} if its set of vertices consists of $x$'s and $z$'s, i.e. either $(x_i,x_j,z_k)$ or $(x_i,z_j,z_k)$.

Note that three arcs $(x_i,x_j,z_k)$ or $(x_i,z_j,z_k)$ in the diagram $D$ form a theta-curve as in Lemma \ref{decomposition} if and only if their corresponding vertices in $\Gamma(D)$ form a bicoloured triangle.

\begin{lemma}
\label{graph}
Let $n\geq 2$ and $\Gamma$ be a graph with $2n$ vertices, labelled $x_1,\ldots,x_n$, $z_1,\ldots,z_n$, and $m$ edges. If 
\begin{equation}
m\geq \frac{3}{2}n^2-n,
\end{equation}
then $\Gamma$ contains a bicoloured triangle.
\end{lemma}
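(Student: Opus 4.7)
The plan is a double-counting argument comparing bicoloured triples of vertices to non-edges of $\Gamma$. I would proceed by contradiction: suppose $\Gamma$ has no bicoloured triangle. Then each of the $\binom{n}{2}\cdot n + n\cdot \binom{n}{2} = n^2(n-1)$ bicoloured triples of vertices must omit at least one of the three possible edges in $\Gamma$, i.e.\ must contain a non-edge of $\Gamma$; otherwise its three vertices would form a bicoloured triangle.

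Let $\bar a$, $\bar b$, $\bar c$ denote the numbers of non-edges of type $xx$, $zz$, and $xz$ respectively, so $\bar a+\bar b+\bar c=\binom{2n}{2}-m$. An $xx$ non-edge $\{x_i,x_j\}$ belongs to $n$ bicoloured triples (one for each $z_k$), and symmetrically a $zz$ non-edge belongs to $n$ such triples; an $xz$ non-edge $\{x_i,z_k\}$ belongs to $2(n-1)$ triples (one for each other $x_j$ and each other $z_l$). Counting pairs (bicoloured triple, non-edge contained in it) in two ways gives
\begin{equation*}
n\bar a+n\bar b+2(n-1)\bar c\geq n^2(n-1).
\end{equation*}
Since $n\leq 2(n-1)$ for $n\geq 2$, the left-hand side is bounded above by $2(n-1)(\bar a+\bar b+\bar c)$, hence $\bar a+\bar b+\bar c\geq n^2/2$ and therefore $m\leq \binom{2n}{2}-n^2/2=\tfrac{3}{2}n^2-n$, which together with the hypothesis $m\geq \tfrac{3}{2}n^2-n$ is meant to close the argument.

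The main obstacle I anticipate is that the above gives only the non-strict bound $m\leq \tfrac{3}{2}n^2-n$, so some care is needed to actually contradict $m\geq \tfrac{3}{2}n^2-n$. The replacement $n\leq 2(n-1)$ is strict for $n\geq 3$, so the inequality $\bar a+\bar b+\bar c\geq n^2/2$ is strict unless $\bar a=\bar b=0$; and in that remaining case a small covering argument shows that using only $xz$ non-edges to cover every triple $(x_i,x_j,z_k)$ forces each $z_k$ to have non-edge degree at least $n-1$, yielding $\bar c\geq n(n-1)>n^2/2$ for $n\geq 3$. In either case the strict inequality $m<\tfrac{3}{2}n^2-n$ holds for $n\geq 3$, producing the desired contradiction.
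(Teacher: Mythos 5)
Your argument is correct for $n\geq 3$ and takes a genuinely different route from the paper. The paper assumes there is no bicoloured triangle, bounds $\sum_{v}d^2(v)=\sum_{(u,v)\in E(\Gamma)}(d(u)+d(v))$ from above by $m(3n-2)$ (using that the two endpoints of any edge have no common neighbour completing a bicoloured triangle) and from below by $4m^2/(2n)$ via Cauchy--Schwarz, obtaining $m\leq\tfrac{3}{2}n^2-n$. Your double count of pairs (bicoloured triple, non-edge contained in it) reaches the same non-strict bound by a more elementary route, with correct counts throughout ($n^2(n-1)$ bicoloured triples; multiplicities $n$, $n$, $2(n-1)$ for the three types of non-edge). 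Your method has the added benefit that the extremal configurations are visible, which is what lets you run the refinement in your final paragraph.

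Your worry about the non-strict bound is well-founded, and in fact the paper's own proof has exactly the same defect: it derives only $m\leq\tfrac{3}{2}n^2-n$ from the absence of a bicoloured triangle, which does not contradict the hypothesis $m\geq\tfrac{3}{2}n^2-n$. (In Lemma \ref{largek} only this contrapositive, non-strict form is used, so nothing downstream is affected.) Your repair of the equality case is valid for $n\geq 3$, but note that it cannot be pushed to $n=2$, because the lemma as stated is false there: the complete bipartite graph on $x_1,x_2,z_1,z_2$ with the four edges $x_iz_j$ has $m=4=\tfrac{3}{2}\cdot 2^2-2$ and contains no triangle at all. So the correct statement is either the contrapositive with the non-strict bound (which both your count and the paper's establish for all $n\geq 2$, and which is all that is needed later), or the implication as stated restricted to $n\geq 3$ (which your refinement proves).
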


\begin{proof}
Let $d(v)$ denote the degree of the vertex $v$. Let $V(\Gamma)$ and $E(\Gamma)$ denote the set of vertices and edges of $\Gamma$ respectively. Note that 
\begin{align}
\sum_{x\in V(\Gamma)}d^2 (x)&=\sum_{(x,y)\in E(\Gamma)}(d(x)+d(y))\nonumber\\
&=\sum_{(x_i,x_j)\in E(\Gamma)}(d(x_i)+d(x_j))+\sum_{(x_i,z_j)\in E(\Gamma)}(d(x_i)+d(z_j))\nonumber\\
&+\sum_{(z_i,z_j)\in E(\Gamma)}(d(z_i)+d(z_j)).
\end{align}
Assume now that $\Gamma$ does not contain a bicoloured triangle. Then if there is an edge between $x_i$ and $x_j$ every $z_k$ is directly connected to at most one of them. Thus $d(x_i)+d(x_j)\leq 2(n-1)+n=3n-2$. Similarly, $d(z_i)+d(z_j)\leq 3n-2$, whenever there is an edge between $z_i$ and $z_j$.

If there is an edge between $x_i$ and $z_j$ every other vertex is directly connected to at most one of $x_i$ and $z_j$. Thus $d(x_i)+d(z_j)\leq 2n$. We obtain if $n\geq 2$

\begin{equation}
\sum_{x\in V(\Gamma)}d^2 (x)\leq m(3n-2).
\end{equation}

Furthermore, since $\sum_{x\in V(\Gamma)}d(x)=2m$, the Cauchy-Schwartz inequality implies that
\begin{equation}
\sum_{x\in V(\Gamma)}d^2 (x)\geq \frac{\left(\sum_{x\in V(\Gamma)}d(x)\right)^2}{2n}=\frac{4m^2}{2n}.
\end{equation}

Thus $\frac{2m^2}{n}\leq m(3n-2)$ and we obtain $m\leq \frac{3}{2}n^2-n$.
\end{proof}

\begin{lemma}
\label{largek}
If $n> 2(c(K_1)+c(K_2)-c(K_1\# K_2))+1$, then for every diagram $D$ of $\theta\in\Omega_{K_{1},K_{2}}^{n}$ with $c(D)\leq n(c(K_{1})+c(K_{2}))$ there is a bicoloured triangle in $\Gamma(D)$.
\end{lemma}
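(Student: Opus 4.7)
The plan is a proof by contradiction: I assume a diagram $D$ of some $\theta\in\Omega_{K_1,K_2}^n$ with $c(D)\leq n(c(K_1)+c(K_2))$ whose graph $\Gamma(D)$ contains no bicoloured triangle, and I aim to derive the contradictory estimate $c(D)>n(c(K_1)+c(K_2))$. Throughout I write $A=\sum_i x_i x_i$, $B=\sum_j z_j z_j$, $C=\sum_{i,j}x_i z_j$, $A'=\sum_{i<j}x_i x_j$ and $B'=\sum_{i<j}z_i z_j$, so that $c(D)=A+B+C+A'+B'$, and denote by $e,f,g$ the numbers of edges of $\Gamma(D)$ of respective types $xz$, $xx$, $zz$, with $m=e+f+g$.

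The first ingredient is the constituent-knot constraint $x_i\cup z_j=K_1\#K_2$ for all $i,j$, which gives $x_i x_i+x_i z_j+z_j z_j\geq c(K_1\#K_2)$. Summing over all $n^2$ pairs $(i,j)$ yields $n(A+B)+C\geq n^2 c(K_1\#K_2)$; solving for $A+B$ and substituting into $c(D)=A+B+C+A'+B'$ gives
\begin{equation*}
c(D)\;\geq\;n\,c(K_1\#K_2)+\frac{n-1}{n}\,C+A'+B'.
\end{equation*}

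The second ingredient is Lemma \ref{graph}: the absence of a bicoloured triangle forces the strict bound $m<\tfrac{3n^2-2n}{2}$. Since every non-edge of $\Gamma(D)$ corresponds to a pair of distinct arcs that cross at least once, one has $C\geq n^2-e$, $A'\geq\binom{n}{2}-f$ and $B'\geq\binom{n}{2}-g$. Plugging these lower bounds into the estimate above and collecting terms — the contribution of $e$ simplifies with coefficient $\tfrac{1}{n}\geq 0$ and therefore can be discarded, while the strict inequality on $m$ survives — reduces the estimate to
\begin{equation*}
c(D)\;>\;n\,c(K_1\#K_2)+\frac{n(n-2)}{2}.
\end{equation*}

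To close, the hypothesis $n>2(c(K_1)+c(K_2)-c(K_1\#K_2))+1$ is equivalent, for integer $n$, to $n\geq 2\Delta+2$ with $\Delta:=c(K_1)+c(K_2)-c(K_1\#K_2)\geq 0$, so $\tfrac{n(n-2)}{2}\geq n\Delta$, and the displayed estimate becomes $c(D)>n(c(K_1)+c(K_2))$, contradicting the assumption on $D$. The main delicate point I foresee is bookkeeping of strict versus non-strict inequalities: the strictness of $m<\tfrac{3n^2-2n}{2}$ from Lemma \ref{graph} is essential, since replacing it by the non-strict version would only yield the weaker threshold $n\geq 2\Delta+3$, and similarly the knot-theoretic input must be applied to the pairs $x_i\cup z_j$ (rather than, say, $x_i\cup x_j$) for the constants in the final simplification to match exactly.
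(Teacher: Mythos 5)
Your proof is correct and follows essentially the same route as the paper: sum the constituent-knot inequality $x_ix_i+x_iz_j+z_jz_j\geq c(K_1\# K_2)$ over all $n^2$ pairs and combine it with Lemma \ref{graph} to force many crossing pairs, yielding a contradiction with $c(D)\leq n(c(K_1)+c(K_2))$. The only difference is bookkeeping — the paper lumps all crossing pairs together with the uniform coefficient $n-1$ and a non-strict edge count, whereas you keep the coefficients separate and exploit the strictness of $m<\tfrac{3}{2}n^2-n$ — but both versions land on the same threshold $n>2(c(K_1)+c(K_2)-c(K_1\# K_2))+1$.
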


\begin{proof}
Since $x_i\cup z_j=K_{1}\# K_{2}$ for all $i,j$, we have the inequality
\begin{equation}
\label{eq:kineq}
n c(D)\geq n^2 c(K_{1}\# K_{2})+(n-1)\sum_{i,j=1}^{n}x_{i}z_{j}+n\underset{i> j}{\sum_{i,j=1}^n}(x_i x_j+z_i z_j).
\end{equation}

Assume there is no bicoloured triangle in $\Gamma(D)$. Then by Lemma \ref{graph} $\Gamma(D)$ has at most $\tfrac{3}{2} n^2-n$ edges. Thus for at most $\tfrac{3}{2} n^2-n$ pairs of arcs there is no crossing between them. Hence for at least $\tfrac{2n(2n-1)}{2}-\tfrac{3}{2} n^2+n=\tfrac{n^2}{2}$ pairs there is a crossing between them. Note that since we only count crossings of $x_{i}$ with $x_{j}$ and $z_{j}$ and crossings of $z_{i}$ with $x_{j}$ and $z_{j}$, we count every crossing only once.

Equation (\ref{eq:kineq}) then becomes
\begin{equation}
n c(D)\geq n^2 c(K_{1}\# K_{2})+\frac{(n-1)n^2}{2}.
\end{equation}

With the assumption that $c(D)\leq n(c(K_{1})+c(K_{2}))$ we get 
\begin{align}
n^{2} (c(K_{1})+c(K_{2}))\geq n^2 c(K_{1}\# K_{2})+\frac{(n-1)n^2}{2}\nonumber\\
\implies(c(K_{1})+c(K_{2})-c(K_{1}\# K_{2}))\geq \frac{n-1}{2},
\end{align}
which gives a contradiction if $n> 2(c(K_{1})+c(K_{2})-c(K_{1}\# K_{2}))+1$.

Thus $\Gamma(D)$ does contain a bicoloured triangle $(x_i,x_j,z_k)$ or $(x_i,z_j,z_k)$ if $n>2(c(K_{1})+c(K_{2})-c(K_{1}\# K_{2}))+1$. 
\end{proof}

Note that Lemma \ref{largek} directly implies the following result.
\begin{lemma}
\label{lowl} 
For $n=2(c(K_{1})+c(K_{2})-c(K_{1}\# K_{2}))+1+k$, $k\geq 1$, a diagram $D$ of $\theta\in\Omega_{K_{1},K_{2}}^{n}$ with $c(D)\leq n(c(K_{1})+c(K_{2}))$ has at least $k$ arcs $x_{i}$ with $x_{i}x_{i}\geq c(K_{1})$ and at least $k$ arcs $z_{i}$ with $z_{i}z_{i}\geq c(K_{2})$. Each of these $x_{i}$ and $z_{i}$ is part of a classical theta-curve where no pair of arcs is crossing each other.
\end{lemma}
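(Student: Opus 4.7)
The plan is to prove the lemma by induction on $k \geq 1$, at each step using Lemma \ref{largek} to extract one bicoloured triangle from $\Gamma(D)$, Lemma \ref{decomposition} to convert that triangle into a heavy $x$-arc and a heavy $z$-arc, and then deleting those two arcs to set up the recursion.

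For the base case $k=1$, the hypothesis $n=2(c(K_1)+c(K_2)-c(K_1\# K_2))+2$ strictly exceeds the threshold of Lemma \ref{largek}, which therefore guarantees a bicoloured triangle in $\Gamma(D)$. Its three arcs are pairwise non-crossing in $D$ and together form a classical theta-curve satisfying the hypotheses of Lemma \ref{decomposition}, which then delivers the desired self-crossing lower bounds on an $x$-arc and a $z$-arc of the triangle.

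For the inductive step with $k\geq 2$, I apply Lemma \ref{largek} to produce a bicoloured triangle and invoke Lemma \ref{decomposition} to extract from it an $x$-arc $x_{i_0}$ and a $z$-arc $z_{j_0}$ with $x_{i_0}x_{i_0}+z_{j_0}z_{j_0}\geq c(K_1)+c(K_2)$, each lying in a pairwise non-crossing classical sub-theta-curve of $D$. Let $D'$ be the diagram obtained from $D$ by removing these two arcs. Then $D'$ is a diagram of some $\theta'\in\Omega_{K_1,K_2}^{n-1}$, since the pairwise constituent-knot conditions on the remaining $2(n-1)$ arcs are inherited from $\theta$, and
\[
c(D')\;\leq\;c(D)-(x_{i_0}x_{i_0}+z_{j_0}z_{j_0})\;\leq\;n(c(K_1)+c(K_2))-(c(K_1)+c(K_2))\;=\;(n-1)(c(K_1)+c(K_2)).
\]
Since $n-1=2(c(K_1)+c(K_2)-c(K_1\# K_2))+1+(k-1)$, the inductive hypothesis supplies $k-1$ further heavy $x$-arcs and $k-1$ further heavy $z$-arcs in $D'$, each belonging to a pairwise non-crossing classical sub-theta-curve of $D'$ and hence of $D$; adjoining $x_{i_0}$ and $z_{j_0}$ completes the induction.

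The main obstacle I foresee is the disjunctive conclusion of Lemma \ref{decomposition}: a bicoloured triangle yields either $(x_ix_i\geq c(K_1),\, z_kz_k\geq c(K_2))$ or the swapped $(x_ix_i\geq c(K_2),\, z_kz_k\geq c(K_1))$, and some care is needed to pin down the precise bounds stated in the lemma rather than the weaker $\min(c(K_1),c(K_2))$. This should be resolved by assuming without loss of generality $c(K_1)\leq c(K_2)$ (using the symmetry of $\Omega_{K_1,K_2}^{n}$ under colour swap) together with a short case analysis on the type of triangle ($(x_i,x_j,z_k)$ versus $(x_i,z_j,z_k)$) and which disjunct obtains; a secondary routine verification is that $\theta'\in\Omega_{K_1,K_2}^{n-1}$ after arc deletion, which is immediate from the pairwise inheritance of the constituent-knot constraints.
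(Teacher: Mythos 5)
Your proof takes essentially the same route as the paper's: the paper likewise obtains a bicoloured triangle from Lemma \ref{largek}, feeds it into Lemma \ref{decomposition} to extract the self-crossing bounds, deletes the pair $(x_i,z_t)$ to land in $\Omega_{K_1,K_2}^{n-1}$ with the crossing budget reduced by $c(K_1)+c(K_2)$, and writes ``repeatedly applying'' where you spell out the induction on $k$. Your checks that $D'$ remains in $\Omega_{K_1,K_2}^{n-1}$ and that the numerology $n-1=2(c(K_1)+c(K_2)-c(K_1\# K_2))+1+(k-1)$ lines up are exactly what the paper leaves implicit, and they are correct.

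The one place where you go beyond the paper is in flagging the disjunction in Lemma \ref{decomposition}, and there your proposed repair does not actually close the issue. If a triangle $(x_i,x_j,z_t)$ resolves via the second disjunct, you get $x_ix_i\geq c(K_2)$ and $z_tz_t\geq c(K_1)$; assuming $c(K_1)\leq c(K_2)$ rescues the $x$-side, since $x_ix_i\geq c(K_2)\geq c(K_1)$, but leaves the $z$-side with only $z_tz_t\geq c(K_1)$, which is strictly weaker than the claimed $z_tz_t\geq c(K_2)$ whenever $c(K_1)<c(K_2)$; the opposite normalisation fails symmetrically on the $x$-side. Since different triangles in the same diagram can a priori resolve via different disjuncts, neither the colour-swap symmetry nor a case analysis on the triangle type pins down the colour-specific bounds as stated. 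You should be aware, however, that the paper's own proof has exactly the same gap: it simply asserts the first disjunct. What both arguments do establish unconditionally is the symmetric bound $x_ix_i+z_tz_t\geq c(K_1)+c(K_2)$ for each extracted pair, which is all that your recursion (and the subsequent counting in the main proposition) actually requires.
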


\begin{proof} 
By Lemma \ref{largek} $\Gamma(D)$ contains a bicoloured triangle. By Lemma \ref{decomposition} this means that the arcs $(x_i,x_j,z_t)$ or $(x_i,z_j,z_t)$ of $D$ that correspond to the vertices of the bicoloured triangle satisfy $x_ix_i\geq c(K_1)$, $z_tz_t\geq c(K_2)$ and $x_jx_j\geq c(K_1)$ or $z_jz_j\geq c(K_2)$.

Deleting $x_{i}$ and $z_t$ results in a diagram $D'$ of a theta-curve of degree $2n-2$ in $\Omega_{K_{1},K_{2}}^{n-1}$ with $c(D')\leq (n-1)(c(K_{1})+c(K_{2}))$. Repeatedly applying Lemma \ref{largek} and Lemma \ref{decomposition} results in the proof of the lemma.  
\end{proof}

\begin{proposition}
If $n\geq\max\{4(c(K_{1})+c(K_{2})-c(K_1\# K_2))+2,2(c(K_1)+c(K_2)+1)\}$, then $c(\Omega_{K_{1},K_{2}}^{n})=n(c(K_{1})+c(K_{2}))$.
\end{proposition}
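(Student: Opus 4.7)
The plan is induction on $n$ starting from $n = N_0 := \max\{4\delta+2,\,2M+2\}$, where $M := c(K_1)+c(K_2)$ and $\delta := M - c(K_1\# K_2)$. Because $c(\Omega_{K_1,K_2}^n) \leq c(\theta_{K_1,K_2}^n) \leq nM$ is immediate from the construction, only the matching lower bound $c(D) \geq nM$ for every diagram $D$ of every $\theta \in \Omega_{K_1,K_2}^n$ needs to be established.

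For the inductive step I would assume the proposition for $n-1 \geq N_0$ and take any diagram $D$ of $\theta \in \Omega_{K_1,K_2}^n$ with $c(D) \leq nM$. Since $n \geq 2\delta+2$, Lemma~\ref{lowl} produces an arc $x_i$ with $x_i x_i \geq c(K_1)$ and an arc $z_t$ with $z_t z_t \geq c(K_2)$. Deleting these two arcs gives a diagram $D'$ of a theta-curve in $\Omega_{K_1,K_2}^{n-1}$ with $c(D') \leq c(D) - x_i x_i - z_t z_t \leq (n-1)M$, and the inductive hypothesis yields $c(D') \geq (n-1)M$, hence $c(D) \geq nM$.

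The actual work is in the base case $n = N_0$. My approach is to iterate the peeling of Lemma~\ref{lowl} for as long as it applies, removing $k = N_0 - 2\delta - 1$ good pairs and arriving at a residual diagram $D^{(k)}$ of a theta-curve in $\Omega_{K_1,K_2}^{2\delta+1}$ with $c(D^{(k)}) \leq (2\delta+1)M$. Arguing as in the proof of Corollary~\ref{mixed} using $x_a \cup z_b = K_1\# K_2$ for every remaining pair gives $c(D^{(k)}) \geq (2\delta+1)(M-\delta)$; combined with the $kM$ crossings already removed during the iteration, this only yields $c(D) \geq N_0 M - (2\delta+1)\delta$, short of the target by exactly $(2\delta+1)\delta$.

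Closing this gap is where the second condition $n \geq 2M+2$ must enter, and I expect this to be the main obstacle. The strategy I would pursue is to track, for each bicoloured triangle discovered during the iteration, its ``third arc'' --- the one not deleted: by Lemma~\ref{decomposition} it is itself good, and if it survives in $D^{(k)}$ it contributes additional self-crossings beyond the naive $(2\delta+1)(M-\delta)$ bound. Alternatively one can try a direct edge-counting argument on $\Gamma(D^{(k)})$ via Lemma~\ref{graph}: the assumption $n \geq 2M+2$ forces $\Gamma$ to be dense enough that such surviving good arcs must be numerous. Either way, the crux is a careful combinatorial accounting showing that, under $n \geq 2M+2$, the residual diagram in $\Omega_{K_1,K_2}^{2\delta+1}$ carries at least $(2\delta+1)\delta$ crossings beyond the trivial lower bound, exactly filling the gap.
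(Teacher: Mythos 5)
Your inductive reduction is sound as far as it goes: the step from $n$ to $n-1$ via Lemma \ref{largek}/\ref{lowl} works, since deleting a good pair $x_i,z_t$ with $x_ix_i\geq c(K_1)$, $z_tz_t\geq c(K_2)$ leaves a diagram in $\Omega_{K_1,K_2}^{n-1}$ with at least $c(K_1)+c(K_2)$ fewer crossings. But the base case is where the entire content of the proposition lives, and you have not proved it: you correctly compute that peeling down to $\Omega_{K_1,K_2}^{2\delta+1}$ (with $\delta=c(K_1)+c(K_2)-c(K_1\# K_2)$) leaves a deficit of $(2\delta+1)\delta$, and then only name two strategies you ``would pursue'' without carrying either out. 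That is a genuine gap. Moreover, the peel-then-bound structure has already discarded the information needed to close it: after passing to the residual diagram you retain only the self-crossings of the deleted arcs and the crossings internal to the residual, whereas the crossings \emph{between} the deleted good arcs and the surviving arcs are precisely what must be counted. Note also that at $2\delta+1$ remaining pairs Lemma \ref{largek} no longer guarantees a bicoloured triangle, so the peeling cannot be continued, and if the residual $\Gamma$-graph happens to still contain bicoloured triangles then the triangle-free edge bound of Lemma \ref{graph} yields nothing for it. Your sketch also never explains where the second threshold $n\geq 2(c(K_1)+c(K_2)+1)$ would enter, which is a sign the argument is not yet there.

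The paper avoids induction entirely and runs a single weighted count on the whole diagram. It sets $l$ equal to the number of good pairs whose vertices lie in \emph{no} bicoloured triangle of $\Gamma(D)$ and $k$ equal to the number of pairs whose vertices do lie in one; only the $l$ pairs are deleted, while the $k$ triangle-pairs are kept inside the sum, contributing $k^2(c(K_1)+c(K_2))$ plus a cross term $(n-l)k(c(K_1)+c(K_2))$. Two further combinatorial inputs close the count: the triangle-free residual on $n-l-k$ pairs forces at least $\tfrac{1}{2}(n-l-k)^2$ crossing pairs by Lemma \ref{graph}, and by maximality of $l$ and $k$ every residual index must cross one of the $k$ triangle-arcs, giving another $k(n-l-k)$ crossing pairs, each weighted by $n-l-1$. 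Dividing by $n-l-k$ and splitting into the cases $c(K_1\# K_2)\lessgtr\tfrac{1}{2}(c(K_1)+c(K_2))$ is exactly what produces the two thresholds $4\delta+2$ and $2(c(K_1)+c(K_2)+1)$. If you want to salvage your outline, the minimal repair is to stop discarding the deleted-versus-surviving crossings and to keep the triangle-pairs inside the weighted sum rather than peeling them off --- at which point you will have reconstructed the paper's argument.
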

\begin{proof}
Assume $c(\Omega_{K_{1},K_{2}}^{n})<n(c(K_{1})+c(K_{2}))$ and let $D$ be a diagram of a theta-curve of degree $2n$ that is in $\Omega_{K_{1},K_{2}}^{n}$ such that $c(D)<n(c(K_{1})+c(K_{2}))$. 

Let $l$ be the largest integer such that there are $l$ arcs $x_i$ and $l$ arcs $z_i$ with $x_i x_i\geq c(K_1)$ and $z_i z_i\geq c(K_2)$ whose corresponding vertices in $\Gamma(D)$ are not part of a bicoloured triangle.
We label these arcs by $x_i$ and $z_i$, $i=1,\ldots,l$.

Let $k$ be the largest integer such that there are $k$ arcs $x_{i}$ and $k$ arcs $z_{i}$ whose corresponding vertices in $\Gamma(D)$ are part of a bicoloured triangle in $\Gamma(D)$. Then by Lemma \ref{jordan} these arcs each cross themselves at least $c(K_{1})$ and $c(K_{2})$ times, respectively, i.e. $x_ix_i\geq c(K_1)$ and $z_iz_i\geq c(K_2)$.
We label these arcs $x_{i}$ and $z_{i}$, $i=l+1,l+2,\ldots,l+k$.

Let $\tilde{D}$ denote the diagram that results from deleting the arcs $x_i$ and $z_i$, $i=1,\ldots,l$. Note that $c(D)\geq c(\tilde{D})+l(c(K_1)+c(K_2))$.

We therefore have
\begin{align}
(n-l) c(\tilde{D})= &(n-l)\left(\underset{i\geq j}{\sum_{i,j=l+1}^{l+k}}(x_ix_j+z_iz_j)+\sum_{i,j=l+1}^{l+k}x_iz_j\right.\nonumber\\
&\left.+\sum_{j=l+1}^{l+k}\sum_{i=l+k+1}^n (x_ix_j+x_iz_j+z_ix_j+z_iz_j)\right.\nonumber\\
&\left.+\underset{i\geq j}{\sum_{i,j=l+k+1}^n} (x_ix_j+z_iz_j)+\sum_{i,j=l+k+1}^n x_iz_j\right).
\end{align}

Rearranging the terms on the right hand side gives
\begin{align}
\label{eq:kquadrat}
&\sum_{i,j=l+1}^{l+k}(x_ix_i+x_iz_j+z_jz_j)+\sum_{i=l+k+1}^{n}\sum_{j=l+1}^{l+k}(x_ix_i+x_iz_j+z_jz_j)\nonumber\\
&+\sum_{i=l+k+1}^{n}\sum_{j=l+1}^{l+k}(x_jx_j+x_jz_i+z_iz_i)
\sum_{i,j=l+k+1}^n (x_ix_i+x_iz_j+z_jz_j)\nonumber\\
&+(n-l-1)\sum_{i,j=l+1}^n x_iz_j+(n-l)\underset{i>j}{\sum_{i,j=l+1}^n}(x_ix_j+z_iz_j).
\end{align}

Since $x_ix_i+x_iz_j+z_jz_j\geq c(K_1\# K_2)$ for all $i,j\in\{1,2,\ldots,n\}$, and $x_ix_i\geq c(K_1)$ and $z_iz_i\geq c(K_2)$ for all $i\in\{l+1,l+2,\ldots,l+k\}$, Equation (\ref{eq:kquadrat}) is at least 
\begin{align}
&k^2(c(K_1)+c(K_2))+(n-l)k(c(K_{1})+c(K_{2}))+(n-l-k)^2 c(K_{1}\# K_{2})\nonumber\\
&+(n-l-1)\sum_{i,j=l+1}^{n}x_{i}z_{j}+(n-l)\underset{i>j}{\sum_{i,j=l+1}^{n}}(x_{i}x_{j}+z_{i}z_{j}).
\end{align}

It follows from $k\geq 0$ and $c(D)< n(c(K_{1})+c(K_{2}))$ and therefore $c(\tilde{D})<(n-l)(c(K_1)+c(K_2))$ that
\begin{align}
(n-l)^2 (c(K_{1})+c(K_{2}))&>(n-l)k(c(K_{1})+c(K_{2}))\nonumber\\
&+(n-l-k)^2 c(K_{1}\# K_{2})\nonumber\\
&+(n-l-1)\sum_{i,j=l+1}^{n}x_{i}z_{j}\nonumber\\
&+(n-l)\underset{i>j}{\sum_{i,j=l+1}^{n}}(x_{i}x_{j}+z_{i}z_{j})\nonumber\\
\iff (n-l-k)(n-l)(c(K_{1})+c(K_{2}))&>(n-l-k)^2 c(K_{1}\# K_{2})\nonumber\\
&+(n-l-1)\sum_{i,j=l+1}^{n}x_{i}z_{j}\nonumber\\
&+(n-l)\underset{i>j}{\sum_{i,j=l+1}^{n}}(x_{i}x_{j}+z_{i}z_{j}).
\end{align}

By construction there can not be any bicoloured triangles in the $\Gamma$-graph associated to the theta-curve of order $2(n-l-k)$ that results from $\tilde{D}$ by deleting the $x_{i}$ and $z_{i}$ with $i=l+1,l+2,\ldots,l+k$. Thus there are at least $\frac{1}{2}(n-l-k)^2$ crossings between arcs with indices larger than $l+k$.

Furthermore, by definition of $l$ and $k$ for every $i>l+k$ either $x_{i}$ or $z_{i}$ must cross $x_j$ or $z_j$ for all $j=l+1,l+2,\ldots,l+k$ at least once.

This gives
\begin{align}
(n-l-k)(n-l)(c(K_{1})+c(K_{2}))>&(n-l-k)^2 c(K_{1}\# K_{2})+(n-l-1)\nonumber\\
&\times\left(\frac{1}{2}(n-l-k)^2+k(n-l-k)\right).
\end{align}

Assume that $k<n-l$. Then we can divide by $(n-l-k)$ and obtain
\begin{align}
(n-l)(c(K_{1})+c(K_{2}))&>(n-l-k)c(K_{1}\# K_{2})+(n-l-1)\nonumber\\
&\times(\frac{1}{2}(n-l-k)+k)\nonumber\\
&=(n-l-k)c(K_{1}\# K_{2})+(n-l-1)\frac{n-l+k}{2}\nonumber\\
\iff (n-l)(c(K_1)&+c(K_2)-c(K_1\# K_2))+kc(K_1\# K_2)\nonumber\\
&> (n-l-1)\frac{n-l+k}{2}.
\end{align}

If $c(K_1\# K_2)<\frac{1}{2}(c(K_1)+c(K_2))$, then 

\begin{align} 
(n-l+k)(c(K_1)+c(K_2)-c(K_1\# K_2))&> (n-l-1)\frac{n-l+k}{2}\nonumber\\
\iff c(K_1)+c(K_2)-c(K_1\# K_2)&>\frac{n-l-1}{2},
\end{align} 
which leads to a contradiction if $n-l\geq 2(c(K_1)+c(K_2)-c(K_1\# K_2))+1$. Note that by Lemma \ref{lowl} we have $l\leq 2(c(K_1)+c(K_2)-c(K_1\# K_2))+1$. Therefore $k=n-l$ if $n\geq 4(c(K_1)+c(K_2)-c(K_1\# K_2))+2$, but this means that all arcs $x_i$ and $z_i$ whose corresponding vertices in $\Gamma(D)$ are not part of a bicloured triangle in $\Gamma(D)$ satisfy $x_i x_i\geq c(K_1)$ and $z_i z_i\geq c(K_2)$. Since the same is true for all arcs whose corresponding vertices in $\Gamma(D)$ are part of a bicoloured triangle, we have $c(D)\geq n(c(K_1)+c(K_2))$.

Similarly, if $c(K_1\# K_2)>\frac{1}{2}(c(K_1)+c(K_2))$, we obtain a contradiction if $n\geq 2(c(K_1)+c(K_2)+1)$. Thus if $n\geq 2(c(K_1)+c(K_2)+1)$, then $k=n-l$ and therefore $c(D)\geq n(c(K_1)+c(K_2))$.
\end{proof}

Since $\theta_{K_1,K_2}^n\in\Omega_{K_1,K_2}^n$, we immediately obtain the following result.

\begin{corollary}
If $n\geq\max\{4(c(K_{1})+c(K_{2})-c(K_1\# K_2))+2,2(c(K_1)+c(K_2)+1)\}$,
then $c(\theta_{K_{1},K_{2}}^{n})=n(c(K_{1})+c(K_{2}))$.
\end{corollary}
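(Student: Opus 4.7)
The plan is to deduce the corollary directly from the preceding proposition. Since the inclusion $\theta_{K_1,K_2}^n\in\Omega_{K_1,K_2}^n$ holds by construction, any lower bound on $c(\Omega_{K_1,K_2}^n)$ transfers immediately to a lower bound on $c(\theta_{K_1,K_2}^n)$, and pairing this with the easy upper bound from the standard diagram will yield the desired equality.

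First I would verify the membership $\theta_{K_1,K_2}^n\in\Omega_{K_1,K_2}^n$. Colour the $n$ arcs carrying $K_1$ blue, labelling them $x_1,\ldots,x_n$, and colour the $n$ arcs carrying $K_2$ red, labelling them $z_1,\ldots,z_n$. Then for all $i,j$ the constituent knots satisfy $x_i\cup z_j=K_1\# K_2$, $x_i\cup x_j=K_1\# K_1$, and $z_i\cup z_j=K_2\# K_2$. Because $K_1$ and $K_2$ are prime and nontrivial, uniqueness of the prime decomposition forces $K_1\# K_1$ and $K_2\# K_2$ to be neither the unknot nor $K_1\# K_2\# K_1\# K_2$, which is exactly the condition imposed on same-colour pairs in the definition of $\Omega_{K_1,K_2}^n$.

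With this in hand, the preceding proposition immediately gives $c(\theta_{K_1,K_2}^n)\geq c(\Omega_{K_1,K_2}^n)=n(c(K_1)+c(K_2))$ for $n$ in the stated range. For the matching upper bound I would exhibit an explicit diagram: start from the planar embedding of the trivial $2n$-theta-graph, insert a minimal-crossing diagram of $K_1$ into each blue arc (contributing $nc(K_1)$ crossings in total), and insert a minimal-crossing diagram of $K_2$ into each red arc (contributing $nc(K_2)$ crossings). Each insertion takes place in a small neighbourhood of the corresponding arc of the planar model, so no crossings between distinct arcs are created, and the total crossing count of the resulting diagram is exactly $n(c(K_1)+c(K_2))$.

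Essentially all of the combinatorial work is carried by the preceding proposition---the extremal graph-theoretic bound of Lemma \ref{graph}, the structural decomposition in Lemma \ref{decomposition}, and the case split comparing $c(K_1\# K_2)$ with $\tfrac{1}{2}(c(K_1)+c(K_2))$---so there is no genuine obstacle in the deduction itself. The only subtlety is the short primality argument in the first step, which is what allows us to recognise $\theta_{K_1,K_2}^n$ as an element of the class $\Omega_{K_1,K_2}^n$ and thereby to inherit the lower bound.
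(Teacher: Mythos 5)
Your argument is correct and is essentially the paper's own: the paper likewise deduces the corollary immediately from the preceding proposition via the inclusion $\theta_{K_1,K_2}^n\in\Omega_{K_1,K_2}^n$, with the upper bound $c(\theta_{K_1,K_2}^n)\leq n(c(K_1)+c(K_2))$ coming from the standard diagram. Your explicit check that $K_1\# K_1$ and $K_2\# K_2$ satisfy the membership condition just spells out what the paper calls obvious.
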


\section{Relations between composite knots and higher degree theta-curves}
\label{sec:double}

In this section we discuss relations between the $c(\Omega_{K_{1},K_{2}}^n)$ and $c(K_{1}\# K_{2})$. In particular, we show that $c(K_{1}\# K_{2})\geq\tfrac{1}{n^2 } c(\Omega_{K_{1},K_{2}}^n)$. From the previous section we know that if $n$ is sufficiently large, then $c(\Omega_{K_{1},K_{2}}^n)=n(c(K_{1})+c(K_{2}))$. Thus finding low values for $n$ for which this equality holds is a way to obtain lower bounds of the form $c(K_{1}\# K_{2})\geq \frac{1}{n} (c(K_{1})+c(K_{2}))$.

Consider a minimal diagram of $K_{1}\# K_{2}$ and draw $n-1$ parallel curves to the diagram in $\mathbb{R}^2$ that are at most $\epsilon$ away from $D$ for some small $\epsilon>0$. Obviously, we typically do not know what the minimal diagram looks like, but the procedure is well-defined. We can think of these curves as a link diagram $D_{n}$, where many of the crossings have no determined signs yet (cf. Figure \ref{fig:double}a)). We claim that we can choose the signs of these crossings and two points, where the parallel diagrams are glued together, such that we obtain a diagram of a theta-curve of degree $2n$ that is an element of $\Omega_{K_{1},K_{2}}^n$. In Figure \ref{fig:double} this can be done by choosing the signs such that the one copy of the knot diagram lies completely below the other. We can not assume that this is the case in general.
Note that the diagram constructed in this way has $n^2 c(K_{1}\# K_{2})$ crossings and thus $n^2 c(K_{1}\# K_{2})\geq c(\Omega_{K_{1},K_{2}}^n)$.

\begin{figure}[tb]\centering
\labellist
\pinlabel \textbf{a)} at 2 420
\pinlabel \textbf{b)} at 600 420
\endlabellist
\begin{subfigure}[t]{0.45\textwidth}
\includegraphics[width=\linewidth]{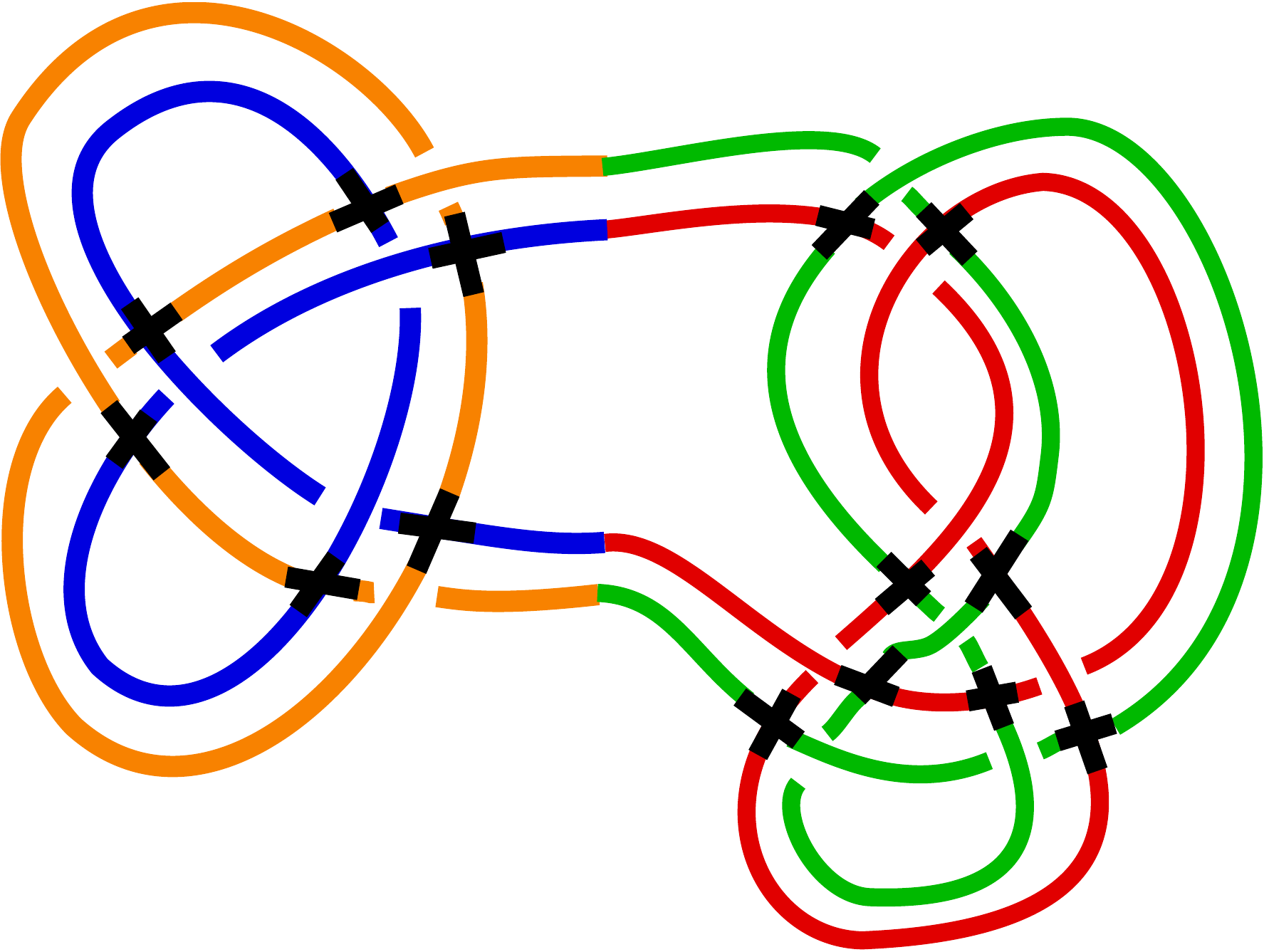}
\end{subfigure}\hfill
\begin{subfigure}[t]{0.45\textwidth}
\includegraphics[width=\linewidth, valign=b]{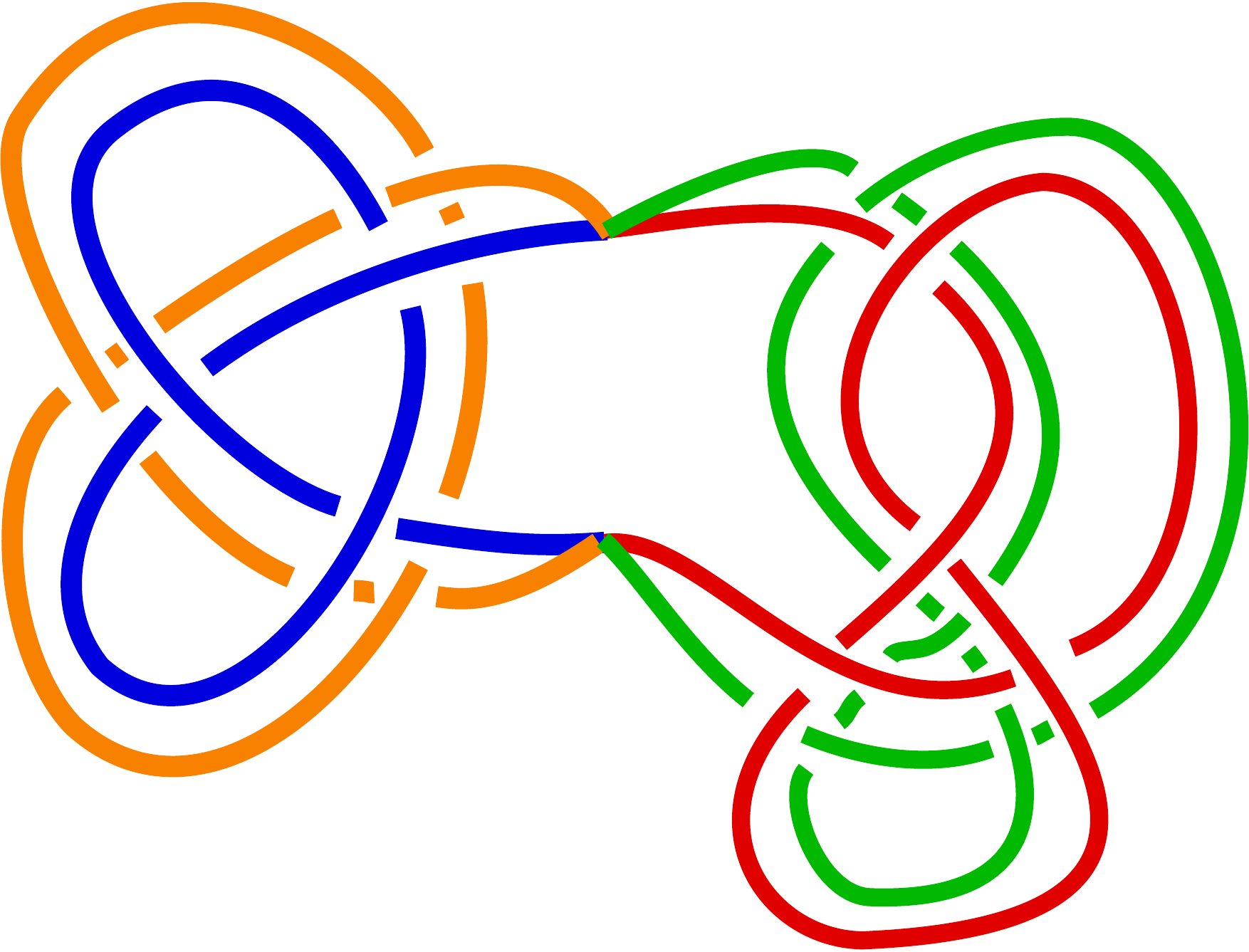}
\end{subfigure}
\caption{a) A minimal diagram of $3_1\# 4_1$ with a parallel curve next to it. The black crossings indicate crossings with undetermined signs. b) A diagram of a theta-curve of degree 4 in $\Omega_{3_1,4_1}^2$, constructed by choosing signs for the black crossings and gluing the parallel curves together in two nodes.\label{fig:double}}
\end{figure}
 
We call the process of choosing two points $n_{1}$, $n_{2}$ on a knot diagram and thereby dividing the knot into two arcs $\alpha_{1}$ and $\alpha_{2}$ a \textit{partition} of the knot diagram.

\begin{lemma}
\label{nonalt}
For all pairs of knots $K_{1}$, $K_{2}$, not both alternating, there is a partition $\alpha_{1}\cup\alpha_{2}=K_{1}\#K_{2}$ of any diagram of $K_{1}\#K_{2}$ such that for every $i\in\{1,2\}$ there is a crossing of $\alpha_{i}$ with itself.
\end{lemma}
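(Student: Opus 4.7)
The strategy is to rephrase the lemma as a combinatorial question about chord diagrams (Gauss diagrams). Parameterize the knot by $S^1$ and record each crossing of $D$ as a chord between the two points of $S^1$ at which the knot passes through it. A partition with cut points $n_1,n_2$ divides the knot into two arcs $\alpha_1,\alpha_2$, and a crossing is a self-crossing of $\alpha_i$ precisely when both endpoints of the associated chord lie in the arc of $S^1$ parameterizing $\alpha_i$. Consequently, the partition demanded by the lemma exists if and only if the chord diagram of $D$ contains two chords that are \emph{non-interleaved}, i.e., either nested or disjoint: given such chords $c,c'$, one can place $n_1,n_2$ to separate them, so that $c$ lies in $\alpha_1$ and $c'$ lies in $\alpha_2$, providing the two required self-crossings.

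I would then argue by contradiction. Suppose $D$ is a diagram of $K_1\#K_2$ with $n$ crossings whose chord diagram has every pair of chords interleaved. A short combinatorial step shows that in this case the chord diagram must, up to cyclic relabeling of the $2n$ endpoints, be the standard pattern $\{(i,n+i):1\leq i\leq n\}$, with every chord spanning exactly $n$ positions. Cutting $S^1$ between positions $n$ and $n+1$ and between positions $2n$ and $1$ then produces two arcs with no self-crossings, with every crossing of $D$ occurring between them. This is precisely a 2-bridge (4-plat) presentation of $K_1\#K_2$, so $b(K_1\#K_2)\leq 2$.

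Finally I would invoke two classical theorems: Schubert's formula $b(K_1\#K_2)=b(K_1)+b(K_2)-1$, and the fact that every 2-bridge knot is alternating. Since $b\geq 1$ for every knot, the inequality $b(K_1\#K_2)\leq 2$ forces one of $K_1,K_2$ to be the unknot, say $K_2$; then $K_1$ itself has $b(K_1)\leq 2$ and is therefore alternating (either as the unknot or as a non-trivial 2-bridge knot). Since the unknot is alternating, both $K_1$ and $K_2$ would be alternating, contradicting the hypothesis. Hence the chord diagram of $D$ must contain two non-interleaved chords, and the partition exists.

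The main obstacle is the combinatorial step identifying all-interleaved chord diagrams with the standard pattern $\{(i,n+i)\}$ and thereby realizing the underlying diagram as a genuine 2-bridge projection. Once that combinatorial observation is in hand, everything reduces cleanly to classical structural results on 2-bridge knots.
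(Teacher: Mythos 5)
Your reduction to chord diagrams, and the identification of the all-interleaved configuration with the standard pattern $\{(i,n+i)\}$, are correct; up to that point your argument coincides with the paper's, which phrases the same reduction in terms of the Gauss code being forced to read $1,2,\ldots,n,1,2,\ldots,n$. The gap is in the sentence asserting that two arcs with no self-crossings, all of whose crossings occur between them, constitute ``precisely a 2-bridge (4-plat) presentation''. That inference is false as a general principle: a diagram that is a union of two embedded arcs is a meander diagram, and every knot --- in particular every knot of bridge number greater than two --- admits one (this is the Jablan--Radovi\'c conjecture, proved by Belousov and Malyutin). So the mere fact that each arc is embedded and all crossings are mixed does not bound the bridge number, and as written the decisive step of your proof rests on an invalid implication.

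The step is repairable, but only by using the stronger structure you have already derived and then set aside: because every chord is a diameter, the two arcs traverse the $n$ crossings in the \emph{same} order, so the underlying projection is the (essentially unique) realization of the code $1,\ldots,n,1,\ldots,n$, namely the standard closed $2$-braid projection; realizability of that code forces $n$ odd, and any assignment of over/under crossings yields the closure of a word in $\sigma_1^{\pm 1}$, hence a $(2,m)$-torus knot. It is this identification, not the meander structure, that gives $b(K_1\# K_2)\leq 2$ (and alternation). From there your endgame works and genuinely differs from the paper's: you invoke Schubert's additivity of bridge number together with the fact that $2$-bridge knots are alternating, whereas the paper concludes directly that such a diagram represents an alternating knot and relies (implicitly, in the opening line ``then $K_1\# K_2$ is not alternating'') on Menasco's theorem that the prime summands of an alternating knot are alternating. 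Trading Menasco for Schubert is a reasonable exchange, but you should also dispose of the degenerate case of a diagram with at most one crossing, where the knot is trivial and the hypothesis is vacuously violated.
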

\begin{proof}
Let $K_{1}$ and $K_{2}$ be knots not both alternating. Then $K_{1}\# K_{2}$ is not alternating. We pick a point $n_{1}$ on a diagram $D$ of $K_{1}\# K_{2}$ and consider the Gauss code starting at $n_{1}$ in an arbitrary direction. 

Let $n_{2}\neq n_{1}$ be a second point on the diagram and $\alpha_{1}$ the arc from $n_{1}$ to $n_{2}$ in the direction of the Gauss code.

Assume that $\alpha_{1}$ does not cross itself. This is equivalent to the position of $n_{2}$ on the knot diagram corresponding to a position in the Gauss code before an absolute value of a number appears for the second time in the Gauss code.

Similarly, $\alpha_{2}$, the other arc in the diagram, does not cross itself if and only if between the positions in Gauss code corresponding to $n_{2}$ and $n_{1}$ (in the direction of the Gauss code), no absolute value appears twice. 

Assume now that no matter where we place $n_{2}$ on the knot diagram, there is an $i=1,2$ such $\alpha_{i}$ does not cross itself. Then no matter where we split the Gauss code into two pieces, one piece will not contain any absolute value twice.

This means that every crossing must be visited once before the first instance of a crossing being visited for a second time, i.e. the first half of the Gauss code modulo signs reads $1, 2, \ldots, c(D)$. Now let $k\in\{1,2,\ldots,c(D)-1\}$ and assume that the crossing $k+1$ is visited the second time before $k$ is visited the second time. Then we could divide the Gauss code into two pieces, one of which contains both occurrences of the $k$ and $-k$ and the other both occurrences of $k+1$ and $-(k+1)$. Hence we found a partition where both $\alpha_{1}$ and $\alpha_{2}$ cross themselves.

If for every $k$ the crossing $k+1$ is visited the second time after crossing $k$ is visited the second time, then the sequence which is the absolute value of the Gauss code sequence is $1,2,\ldots,c(D),1,2,\ldots,c(D)$. It is easy to see that a knot that allows a diagram with such a Gauss code must be alternating, contradicting the assumption that $K_1$ and $K_2$ are not both alternating. 
\end{proof}

By Lemma \ref{nonalt} if $K_{1}$ and $K_{2}$ are not both alternating we can glue the link diagram $D_{n}$ of $n$ parallel copies of the diagram of $K_{1}\# K_{2}$ such that each of the edges of the resulting embedded graph crosses itself. Call the resulting diagram (with some undetermined crossing signs) $\tilde{D}$. We claim that now we can choose the signs of the crossings that are not determined yet in such a way that the resulting theta-curve of order $2n$ is in $\Omega_{K_{1},K_{2}}^n$, i.e. there are $n$ blue arcs $x_{i}$ and $n$ red arcs $z_{i}$ such that for all $i$ and $j$ the knot $x_{i}\cup z_{j}$ is $K_{1}\# K_{2}$ and none of $x_{i}\cup x_{i}$ and $z_{i}\cup z_{i}$ is the unknot or $K_1\# K_2\# K_1\# K_2$.

\begin{lemma}
We can choose the signs of the crossings of $\tilde{D}$ that are not determined yet in such a way that $\tilde{D}$ is a diagram of a theta-curve of degree $2n$ in $\Omega_{K_{1},K_{2}}^n$.
\end{lemma}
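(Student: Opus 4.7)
The plan is to choose the undetermined signs in three independent groups, one for each class of constituent sub-knot used in the definition of $\Omega_{K_1,K_2}^n$. Every undetermined sign of $\tilde{D}$ corresponds to an "off-diagonal" crossing in $D_n$ arising from some crossing $c$ of $D$, namely a crossing between a strand in layer $i$ and a strand in layer $j$ for distinct $i,j$. If $c$ is a self-crossing of $\alpha_1$, such an off-diagonal is an $x_i x_j$ crossing with $i\neq j$; if $c$ is a self-crossing of $\alpha_2$, it is a $z_i z_j$ crossing; if $c$ is a mixed crossing, it is an $x_i z_j$ crossing. Therefore these three groups of signs affect only (respectively) the sub-knots $x_i \cup x_j$, $z_i \cup z_j$, and $x_i\cup z_j$ for $i \neq j$, and can be handled independently.

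For the mixed-type off-diagonals I would use a \emph{consistent layering}: fix a linear order on the $n$ parallel layers of $D$ and at every such off-diagonal let the higher-indexed layer pass over the lower. Then for any pair $i\neq j$, the strands of layer $j$ in the sub-diagram of $x_i\cup z_j$ can be vertically isotoped onto those of layer $i$ because the off-diagonal crossings between them are consistently ordered; the result is a diagram of $\alpha_1 \cup \alpha_2 = K_1\# K_2$. For $i = j$ the sub-diagram is already a copy of $D$. Hence $x_i\cup z_j = K_1\# K_2$ for every pair $(i,j)$, establishing the blue--red part of the definition of $\Omega_{K_1,K_2}^n$.

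For the off-diagonals at self-crossings of $\alpha_1$—which exist thanks to Lemma \ref{nonalt}—I would choose signs so that the $n$ parallel layers of the $\alpha_1$-tangle behave as $n$ geometrically independent copies of the $K_1$-tangle, rather than as a tubular parallelisation of a single copy. A tubular parallelisation (which is what the consistent layering would give) yields $x_i\cup x_j = $ unknot, since the doubled arc then bounds a disk; the alternative "independent" configuration gives $x_i\cup x_j = K_1\# K_1$, since the closure of two disjoint $K_1$-tangles sharing only the endpoints $n_1, n_2$ is exactly a connected sum. An entirely analogous choice at self-crossings of $\alpha_2$ produces $z_i\cup z_j = K_2\# K_2$. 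Primeness of $K_1, K_2$ and uniqueness of the prime decomposition of knots rule out $K_1\# K_1, K_2\# K_2 \in \{\text{unknot},\, K_1\# K_2\# K_1\# K_2\}$, so $\tilde D$ with the specified signs is a diagram of a theta-curve in $\Omega_{K_1,K_2}^n$.

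The main obstacle is the precise description and verification of the third step: finding a sign assignment at the self-crossings of $\alpha_1$ that genuinely realises the "independent" configuration rather than a twisted cable, a Bing-double-like pattern, or some other framed variant of the doubled arc. The cleanest way around this technicality is to exhibit a specific sign choice at each self-crossing of $\alpha_1$ (and of $\alpha_2$) and then argue—either by an explicit sequence of Reidemeister moves, or by observing that the resulting $\tilde D$ is ambient-isotopic to the standard diagram of $\theta_{K_1,K_2}^n$ in which the $n$ $K_1$-tangles and the $n$ $K_2$-tangles are drawn side-by-side between two nodes—that the choice produces $x_i\cup x_j = K_1\# K_1$ and $z_i\cup z_j = K_2\# K_2$. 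Since $\theta_{K_1,K_2}^n \in \Omega_{K_1,K_2}^n$, the conclusion then follows.
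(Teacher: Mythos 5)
There is a genuine gap, and it is exactly the step you flag as ``the main obstacle.'' Your target for the same-colour pairs is both unachievable and stronger than what is needed. First, $\alpha_1$ is \emph{not} a $K_1$-tangle: $D$ is an unknown minimal diagram of $K_1\# K_2$, and the partition $\alpha_1\cup\alpha_2$ is chosen (via Lemma \ref{nonalt}) only so that each arc crosses itself; there is no control over which ``part'' of $K_1\# K_2$ sits inside $\alpha_1$. Second, even granting that, two parallel plane copies of an arc joined at their endpoints cannot be made ``geometrically independent'' by choosing over/under information at their mutual crossings: whatever signs you pick, $x_i\cup x_j$ is a band-closure-type knot (the boundary of a possibly clasped, twisted annulus following $\alpha_1$), so it is an unknot, a Whitehead-double-like satellite, or similar -- never $K_1\# K_1$ in general. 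The saving grace, which your proposal misses, is that the definition of $\Omega_{K_1,K_2}^n$ does not ask for $x_i\cup x_j=K_1\# K_1$; it only forbids the unknot and $K_1\# K_2\# K_1\# K_2$. The paper exploits this: give the two $x_i$--$x_j$ crossings in exactly one $4$-crossing cluster (which exists by Lemma \ref{nonalt}) opposite signs, creating a clasp, so that $x_i\cup x_j$ is a Whitehead double of some knot $K$ -- prime whenever it is nontrivial, hence not $K_1\# K_2\# K_1\# K_2$ -- and in the one degenerate case (untwisted double of the unknot) flip one more sign to get a trefoil.

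Your first step also has a problem. Choosing ``higher layer always over'' at every off-diagonal $x_iz_j$ crossing does not reproduce the crossing pattern of $D$ between the two arcs: for $i\neq j$ every mutual crossing of $x_i$ and $z_j$ then has $z_j$ on top, which forces $x_i\cup z_j$ to split as a connected sum of the trivial closures of the two arcs -- and since $D$ is an arbitrary minimal diagram, those closures need not be $K_1$ and $K_2$. The claimed ``vertical isotopy onto layer $i$'' would have to push $z_j$ through $x_i$ at the crossings where $D$ has $\alpha_1$ over $\alpha_2$, so it is not an isotopy. The fix is simpler than your layering: since the shadow of $x_i\cup z_j$ is planar-isotopic to the shadow of $D$, just assign each mutual crossing the sign of the corresponding crossing of $D$, making $x_i\cup z_j$ literally a copy of $D$ and hence of $K_1\# K_2$. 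This is what the paper does.
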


\begin{proof}
Note that by construction $x_{i}\cup z_{j}$, $i,j=1,2,\ldots,n$, is the original diagram $D$ of $K_{1}\# K_{2}$, where we deleted the information about the signs of the crossings. We can thus choose the signs of the crossings of $x_{i}$ with $z_{j}$ and the signs of crossings of $x_{i}$ and $z_{j}$ with themselves such that $x_{i}\cup z_{j}=K_{1}\# K_{2}$ for all $i$ and $j$.

We now need to determine the signs of the crossings of $x_{i}$ with $x_{j}$ and $z_{i}$ with $z_{j}$, $i\neq  j$. Note that $x_{i}$ and $x_{j}$ are two parallel arcs. So for each crossing between them, there is a cluster of four crossings, one of $x_{i}$ with itself, one of $x_{j}$ with itself (both of whose crossings have been already determined to carry identical signs) and two crossings of $x_{i}$ with $x_{j}$.

If for every such $4$-crossing we choose to give the crossings of $x_{i}$ and $x_{j}$ the same sign as the corresponding crossings of $x_{i}$ with itself and $x_{j}$ with itself, then $x_{i}$ and $x_{j}$ are two parallel curves glued together at their ends and hence $x_{i}\cup x_{j}$ is the unknot. We can move the ends, where $x_{i}$ and $x_{j}$ are glued together, through the knot to untie it. 

\begin{figure}[h!]\centering
\labellist
\pinlabel \textbf{a)} at 10 300
\pinlabel \textbf{b)} at 480 300
\pinlabel \textbf{c)} at 980 300
\endlabellist
\begin{subfigure}[t]{0.23\textwidth}
\includegraphics[width=\linewidth]{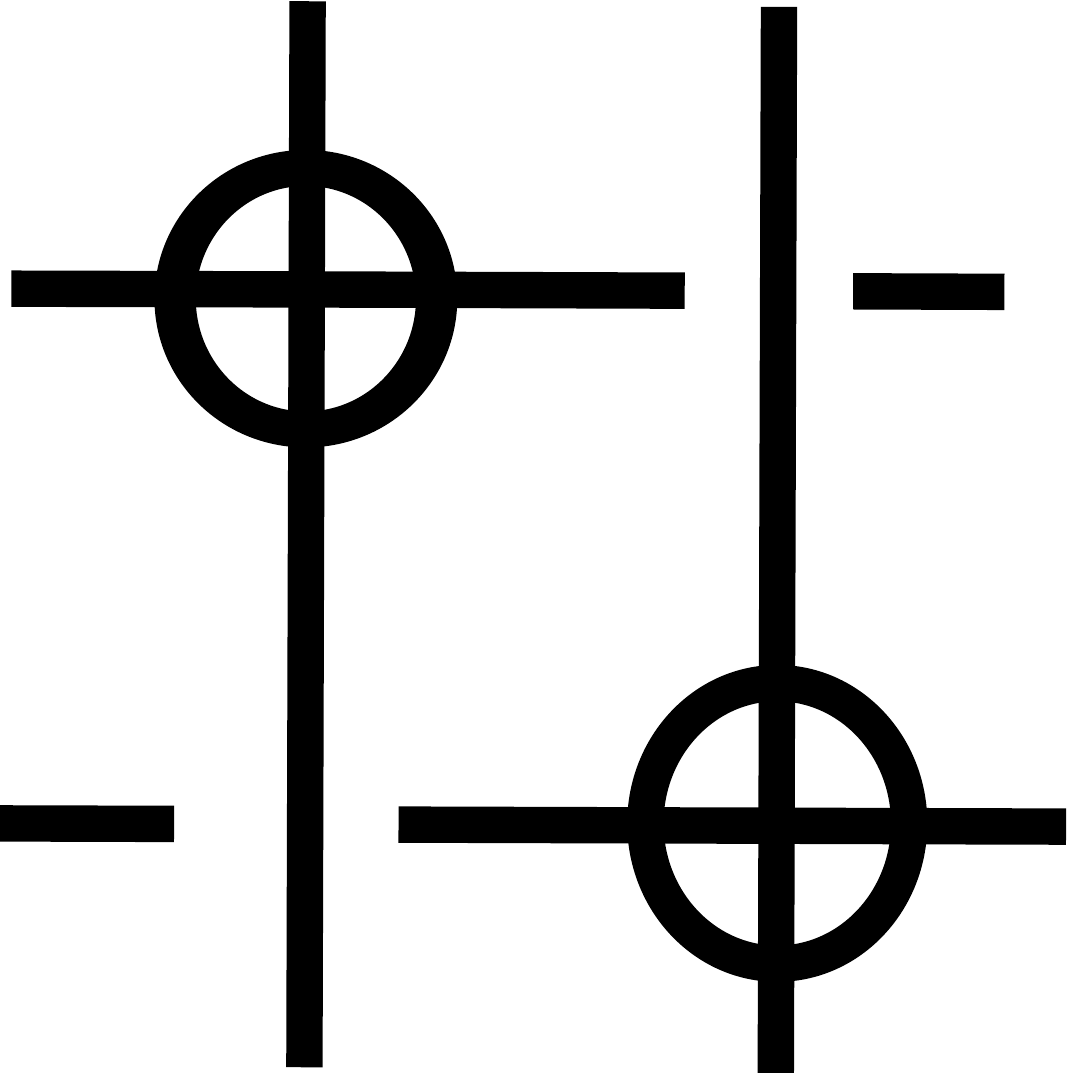}
\end{subfigure}\hfill
\begin{subfigure}[t]{0.28\textwidth}
\includegraphics[width=\linewidth, valign=b]{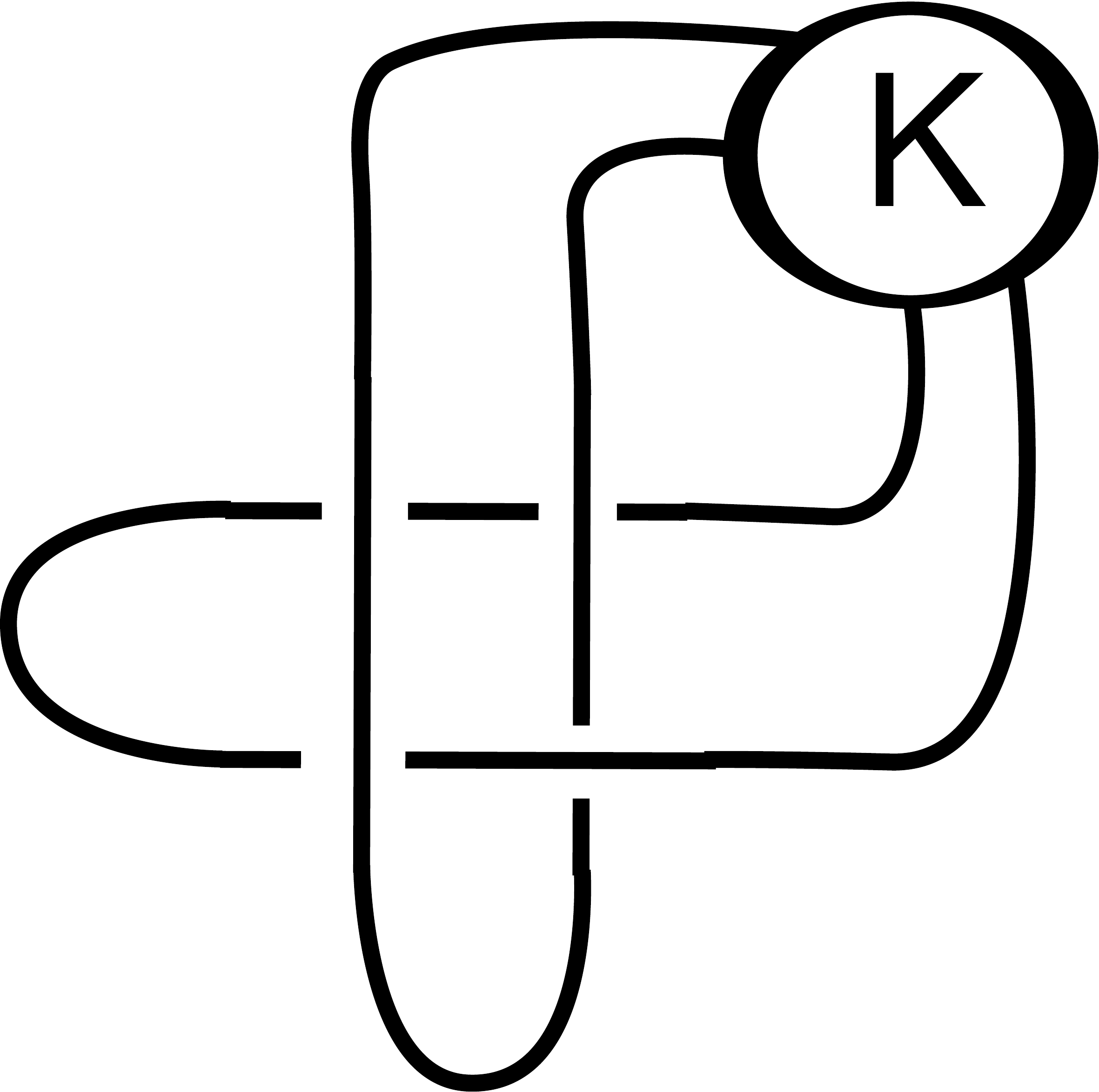}
\end{subfigure}\hfill
\begin{subfigure}[t]{0.28\textwidth}
\includegraphics[width=\linewidth, valign=b]{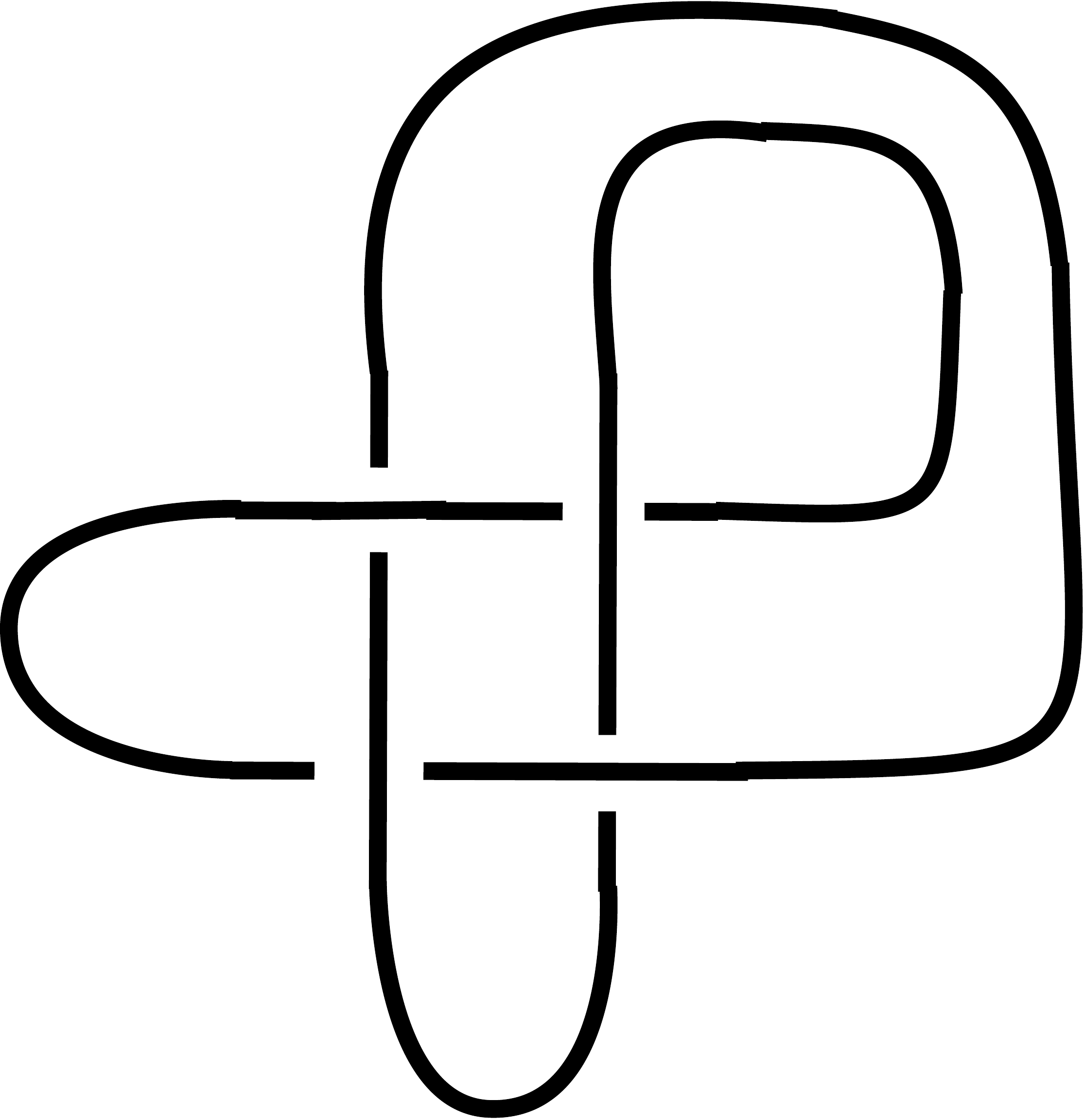}
\end{subfigure}
\caption{a) Doubling the strands turns every crossing into a 4-crossing, where two of the signs are given. Choosing the remaining signs results either in a diagram of a non-trivial Whitehead double of a knot $K$ (b) or in a trefoil (c).\label{fig:signs}}
\end{figure}

Instead we pick one such $4$-crossing, which exists by Lemma \ref{nonalt} for each pair $(x_i,x_j)$ and $(z_i,z_j)$. For all the others we distribute signs exactly as above, but for the one we picked we give the two crossings between $x_{i}$ and $x_{j}$ different signs. Then as we slide the ends of the curves through the knot as in the previous case, we obtain a diagram as in Figure \ref{fig:signs} b). It shows that the resulting knot is a Whitehead double of some knot $K$.

The only case where this Whitehead double is the unknot is if it is the untwisted Whitehead double of the unknot. In all other cases it is prime and therefore we have found a choice of signs for which $x_{i}\cup x_{j}$ is neither the unknot nor $K_1\# K_2\# K_1\# K_2$.

If $K$ is the unknot and the Whitehead double is untwisted, we can change one of the crossings in the $4$-crossing that we picked, so that now the two crossings between $x_{i}$ and $x_{j}$ both have different signs from the crossings of $x_{i}$ and $x_{j}$ with themselves. In this case the diagram that we obtain is the trefoil (Figure \ref{fig:signs} c)). 

Therefore, we can always choose the signs of the crossings in such a way that $x_{i}\cup z_{j}=K_{1}\# K_{2}$ and $x_{i}\cup x_{j}$ and $z_{i}\cup z_{j}$ are neither the unknot nor $K_1\# K_2\# K_1\# K_2$ for all $i,j=1,2,\ldots,n$.    
\end{proof}

Note that for alternating knots the additivity of the crossing number is known, so the next proposition follows from the previous lemmas and the opening remarks to this section.
\begin{proposition}
\label{square}
For every $n\in\mathbb{Z}_{>0}$ we have $c(K_{1}\# K_{2})\geq\frac{1}{n^2} c(\Omega_{K_{1},K_{2}}^{n})$.
\end{proposition}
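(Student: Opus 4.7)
The plan is to establish the equivalent inequality $c(\Omega_{K_1,K_2}^n)\le n^2\, c(K_1\#K_2)$ by an explicit construction, handling the alternating case separately and invoking the two lemmas stated just above for the general case.

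First I would split on whether both $K_1$ and $K_2$ are alternating. If they are, then the Kauffman--Murasugi--Thistlethwaite theorem gives $c(K_1\#K_2)=c(K_1)+c(K_2)$, and the opening remarks of this section provide $c(\Omega_{K_1,K_2}^n)\le c(\theta_{K_1,K_2}^n)\le n(c(K_1)+c(K_2))\le n^2\, c(K_1\#K_2)$, which already suffices for $n\ge 1$. So this case is essentially free.

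The interesting case is when $K_1$ and $K_2$ are not both alternating. Here I would start from a minimal diagram $D$ of $K_1\#K_2$, apply Lemma \ref{nonalt} to pick two points $n_1,n_2$ on $D$ that partition it into arcs $\alpha_1,\alpha_2$ each of which crosses itself, and then draw $n-1$ parallel copies of $D$ inside an $\epsilon$-neighbourhood. Gluing the $n$ copies together at $n_1$ and at $n_2$ yields the diagram $\tilde D$ of a degree-$2n$ spatial theta-graph: each original crossing of $D$ becomes an $n\times n$ cluster, so $\tilde D$ has exactly $n^2\, c(K_1\#K_2)$ crossings. Colour the $n$ parallel copies of $\alpha_1$ blue (call them $x_1,\dots,x_n$) and the $n$ copies of $\alpha_2$ red ($z_1,\dots,z_n$), so that $x_i\cup z_j$ is one full copy of $D$ for every pair $(i,j)$.

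Finally, I would invoke the second (unnumbered) lemma stated just before the proposition to select the signs of the still-undetermined crossings so that each $x_i\cup z_j$ realises $K_1\#K_2$, while each $x_i\cup x_j$ and $z_i\cup z_j$ is arranged -- using the flip-one-sign trick inside a $4$-cluster coming from a self-crossing of $\alpha_1$ or $\alpha_2$, which exists by Lemma \ref{nonalt} -- to be neither the unknot nor $K_1\#K_2\#K_1\#K_2$. This places $\tilde D$ in $\Omega_{K_1,K_2}^n$, yielding $c(\Omega_{K_1,K_2}^n)\le c(\tilde D)=n^2\, c(K_1\#K_2)$; dividing by $n^2$ gives the claim.

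The main obstacle is really hidden in the two preceding lemmas: the combinatorial/Gauss-code argument of Lemma \ref{nonalt} and the Whitehead-double analysis used for the sign choice. Once those are in hand, the proposition itself is a bookkeeping consequence of counting crossings in the $n$-cable and verifying the colour condition defining $\Omega_{K_1,K_2}^n$. The only minor care needed is to ensure the alternating case is separated out, because there the parallel-cable recipe cannot be used to produce a valid element of $\Omega_{K_1,K_2}^n$ directly, but classical additivity absorbs that case trivially.
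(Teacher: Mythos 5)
Your proposal is correct and follows essentially the same route as the paper: the alternating case is absorbed by the known additivity of crossing number for alternating knots, and the remaining case uses the $n$-fold parallel cable of a minimal diagram of $K_1\#K_2$ together with Lemma \ref{nonalt} and the sign-choosing lemma to produce a diagram in $\Omega_{K_1,K_2}^n$ with $n^2c(K_1\#K_2)$ crossings. The crossing count and the verification of the colouring conditions match the paper's argument.
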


As mentioned before, Proposition \ref{square} opens up the possibility of finding lower bounds for $c(K_{1}\# K_{2})$ by finding low $n$ such that $c(\Omega_{K_{1},K_{2}}^n)=n(c(K_{1})+c(K_{2}))$, since then $c(K_{1}\# K_{2})\geq \tfrac{1}{n} (c(K_{1})+c(K_{2}))$. 

Note that the $\Gamma$-graph associated to the constructed diagram $\tilde{D}$ (after the signs have been assigned) does not contain a bicoloured triangle. The next corollary follows directly.

\begin{corollary}
Let $n\in\mathbb{Z}_{\geq 2}$ such that every diagram $D$ of any theta-curve $\theta\in\Omega_{K_1,K_2}^n$ such that $\Gamma(D)$ does not have any bicoloured triangles satisfies $c(D)\geq n(c(K_1)+c(K_2))$. Then $c(K_1\# K_2)\geq \frac{1}{n}(c(K_1)+c(K_2))$.
\end{corollary}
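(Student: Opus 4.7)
The plan is to apply the construction from the proof of Proposition \ref{square} and then read off the bound directly. First, if both $K_1$ and $K_2$ are alternating, then by Kauffman--Murasugi--Thistlethwaite we already have $c(K_1 \# K_2) = c(K_1) + c(K_2)$, and the desired inequality holds for every $n \geq 1$. So I may assume that at least one of $K_1$, $K_2$ is non-alternating, which makes $K_1 \# K_2$ non-alternating and in particular puts Lemma \ref{nonalt} at my disposal.

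Next I would take a minimal diagram $D$ of $K_1 \# K_2$ and run exactly the construction preceding Proposition \ref{square}: draw $n-1$ parallel copies of $D$ inside an $\varepsilon$-neighbourhood, use Lemma \ref{nonalt} to pick a partition $\alpha_1 \cup \alpha_2 = K_1 \# K_2$ in which both arcs self-cross, glue the parallel copies at the two partition points into a $2n$-theta-graph, and invoke the sign-assignment lemma to realise the resulting diagram $\tilde{D}$ as a theta-curve in $\Omega_{K_1,K_2}^n$. By construction $c(\tilde{D}) = n^2 c(K_1 \# K_2)$.

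I then check that $\Gamma(\tilde{D})$ has no bicoloured triangle. Because $\alpha_1$ self-crosses in $D$, each self-crossing of $\alpha_1$ becomes a 4-cluster in $\tilde{D}$ in which $x_i$ meets $x_j$ for every $i \neq j$; hence no two $x$-vertices are adjacent in $\Gamma(\tilde{D})$. The same argument applied to $\alpha_2$ shows no two $z$-vertices are adjacent. Since every bicoloured triangle contains a monochromatic edge, none exist.

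The corollary's hypothesis applied to $\tilde{D}$ then gives $n^2 c(K_1 \# K_2) = c(\tilde{D}) \geq n (c(K_1) + c(K_2))$, and dividing by $n$ yields $c(K_1 \# K_2) \geq \tfrac{1}{n}(c(K_1) + c(K_2))$. The argument is really just a repackaging of the construction, and the only step that requires genuine care is the verification that $\Gamma(\tilde{D})$ is bicoloured-triangle-free --- the remark flagged in the paper just before the corollary --- so I expect no substantive obstacle beyond faithfully carrying out the earlier construction.
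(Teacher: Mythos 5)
Your proposal is correct and follows exactly the route the paper intends: the paper's own justification is simply the remark that the constructed diagram $\tilde{D}$ from the proof of Proposition \ref{square} lies in $\Omega_{K_1,K_2}^{n}$, has $n^{2}c(K_1\# K_2)$ crossings, and has a bicoloured-triangle-free $\Gamma$-graph, after which the corollary ``follows directly.'' You have merely filled in the details the paper leaves implicit (the alternating case via Lemma \ref{nonalt}'s hypothesis, and the observation that a bicoloured triangle forces a monochromatic edge while the parallel copies of each $\alpha_i$ necessarily cross pairwise), so there is no substantive difference from the paper's argument.
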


Lemma \ref{largek} shows that such values for $n$ exist.
For example, the value of $n=2(c(K_1)+c(K_2)-c(K_1\# K_2))+1$ found in Lemma \ref{largek} gives $c(K_1\# K_2)\geq \frac{c(K_1)+c(K_2)}{2(c(K_1)+c(K_2)-c(K_1\# K_2))+1}$, which is trivial. However, if we could improve on the value of $n$, then we would obtain a new lower bound for $c(K_1\# K_2)$.

\section{Other graphs}
\label{sec:outlook}

In this section we consider graphs with more than two nodes starting with the example graph $\oplus$ with four 3-valent vertices connected by edges in a circle and one 4-valent vertex that is connected to every other vertex by an edge. We want to think of this graph as two theta-graphs glued together in a neighbourhood of one of their vertices.

The set of theta-curves also comes with a notion of connected sum. We can orient the edges of a theta-curve such that one of its vertices is a source $n_{1}$ and the other is a sink $n_{2}$. Then the connected sum of two theta-curves, $\theta_1$ and $\theta_2$, is formed by deleting a neighbourhood of $n_{2}$ of $\theta_{1}$ and a neighbourhood of $n_{1}$ of $\theta_{2}$ and gluing the theta-curves together on the open ends of their arcs, joining arcs with the same labels $x$, $y$ and $z$ respectively. 
In order to make this a natural operation we should consider two embedded graphs to be equivalent iff they are related by an ambient isotopy that does not change the clockwise order in which the arcs meet the node.

Note that the connected sum commutes with tying knots into one of the arcs, in particular $\theta_{K_1,K_2}\# \theta_{K_3,K_4}=\theta_{K_1\#K_3,K_2\# K_4}$. This means that if the crossing number of theta-curves is additive under connected sum, then the crossing number of knots is also additive (simply take $K_{2}$ and $K_4$ to be the unknot).

A fundamental concept of Section \ref{sec:proof} can now easily be generalised to $\oplus$ (and in fact beyond). The step from knots to theta-curves in Section \ref{sec:proof} is adding an extra arc, which we will think  of as adding the part of the knot (or in this case the theta-curve) that was deleted in the process of the connected sum. In the case of the connected sum of two theta-curves adding the deleted part back in results in $\oplus$.

\begin{figure}\centering
\labellist
\pinlabel $x_1$ at 68 299
\pinlabel $x_2$ at 310 289
\pinlabel $y_1$ at 130 149
\pinlabel $y_2$ at 255 200
\pinlabel $z_1$ at 68 85
\pinlabel $z_2$ at 320 85
\pinlabel $n_1$ at 20 179
\pinlabel $n_2$ at 350 179
\pinlabel $h_1$ at 170 240
\pinlabel $h_2$ at 220 120
\endlabellist
\includegraphics[height=3.5cm]{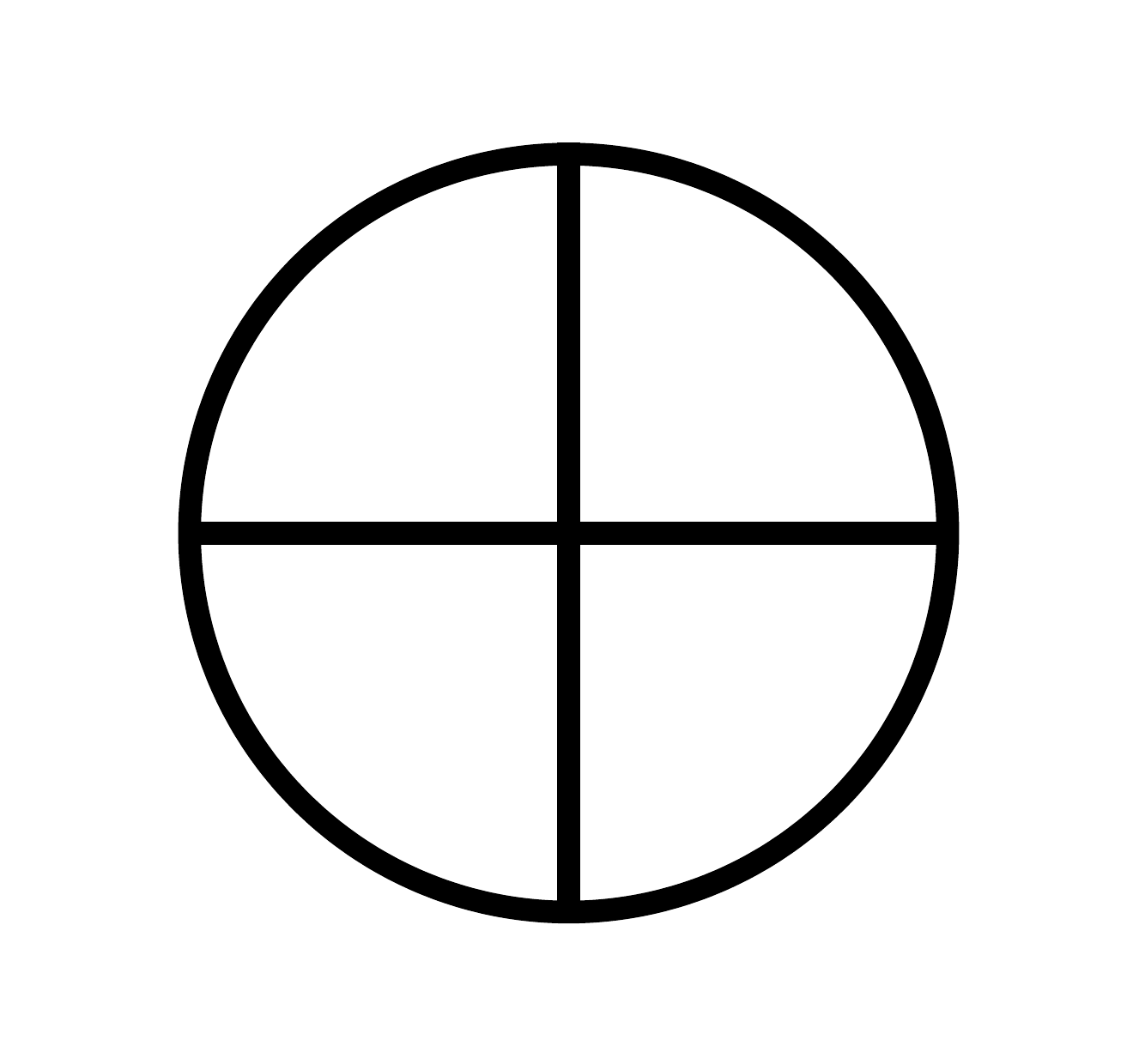}
\caption{The planar embedding of the $\oplus$-graph with labelled edges.\label{fig:def_oplus}}
\end{figure}

We label the edges of this graph as follows: We fix one of the 3-valent vertices $n_{1}$ and denote the edges connected to $n_1$ by $x_{1}$, $y_{1}$ and $z_{1}$. The only 3-valent vertex that is not connected to $n_{1}$ is called $n_{2}$ and edges connecting to $n_{2}$ have labels $x_{2}$, $y_{2}$ and $z_{2}$ such that $x_{1}$ and $x_{2}$ (and similarly $y_{1}$ and $y_{2}$ as well as $z_{1}$ and $z_{2}$) meet at a vertex. The two edges that are left are called $h_{1}$ and $h_{2}$.

Consider now an embedding of $\oplus$ where a copy of $K_{1}$ is tied into $x_{1}$ and $z_{2}$ of the planar $\oplus$ and a copy of $K_{2}$ is tied into each of the edges $z_{1}$ and $x_{2}$, which we denote by $\oplus_{K_1,K_2}$.
Then for each $i\in\{1,2\}$ deleting $x_{i}$, $y_{i}$ and $z_{i}$ results in a diagram of a theta-curve $\theta_{K_1,K_2}$. In other words
\begin{equation}
c(\oplus_{K_1,K_2})+h_1h_1+h_2h_2+h_1h_2\geq 2c(\theta_{K_1,K_2})+\sum_{(k,l)\in\{x,y,z\}^2}k_1l_2.
\end{equation}

On the other hand, deleting the edges $h_1$ and $h_2$ results in the theta-curve $\theta_{K_1\# K_2,K_1\# K_2}$. We thus have a situation that is similar to that of Section 2, where
\begin{equation}
c(\oplus_{K_1,K_2})\geq c\left(\theta_{K_1\# K_2,K_1\# K_2}\right),
\end{equation}
and equality is equivalent to $c(\theta_{K_1\# K_2,K_1\# K_2})=2c(\theta_{K_1,K_2})$. Since 
\begin{equation}
c(\theta_{K_1\# K_2,K_1\# K_2})\leq 2c(K_{1}\# K_2),
\end{equation} 
this then is equivalent to $c(K_1\# K_2)=c(\theta_{K_1,K_2})$ and by Proposition \ref{prop1} to the additivity of the crossing number.

Analogously, we can define the connected sum of two theta-curves of any degree (cf. Figure \ref{fig:oplus_sum}). 

\begin{figure}[tb]\centering
\labellist
\pinlabel \textbf{a)} at 20 850
\pinlabel \textbf{b)} at 1050 850
\endlabellist
\begin{subfigure}[t]{0.45\textwidth}
\includegraphics[width=\linewidth]{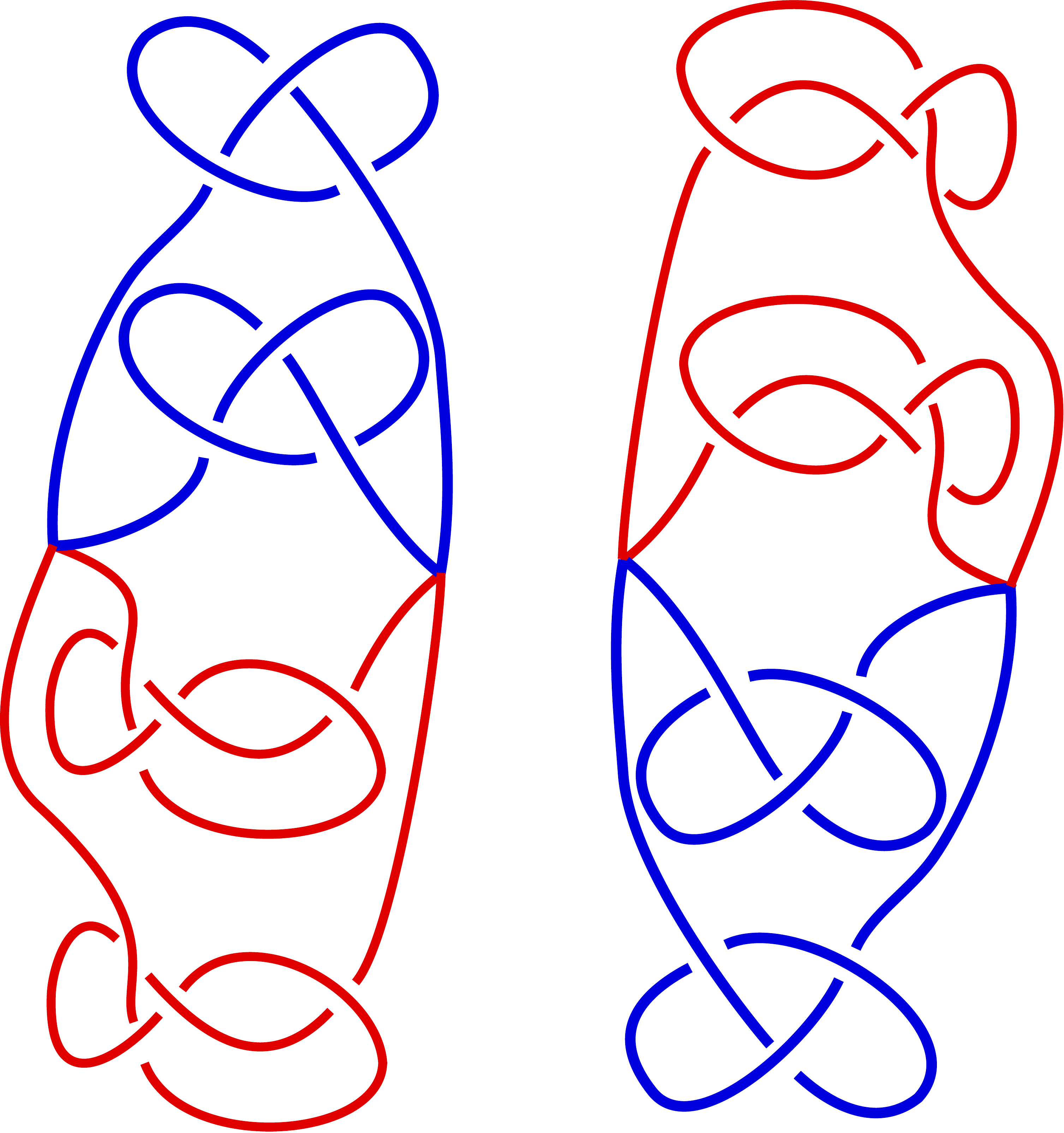}
\end{subfigure}\hfill
\begin{subfigure}[t]{0.45\textwidth}
\includegraphics[width=\linewidth, valign=b]{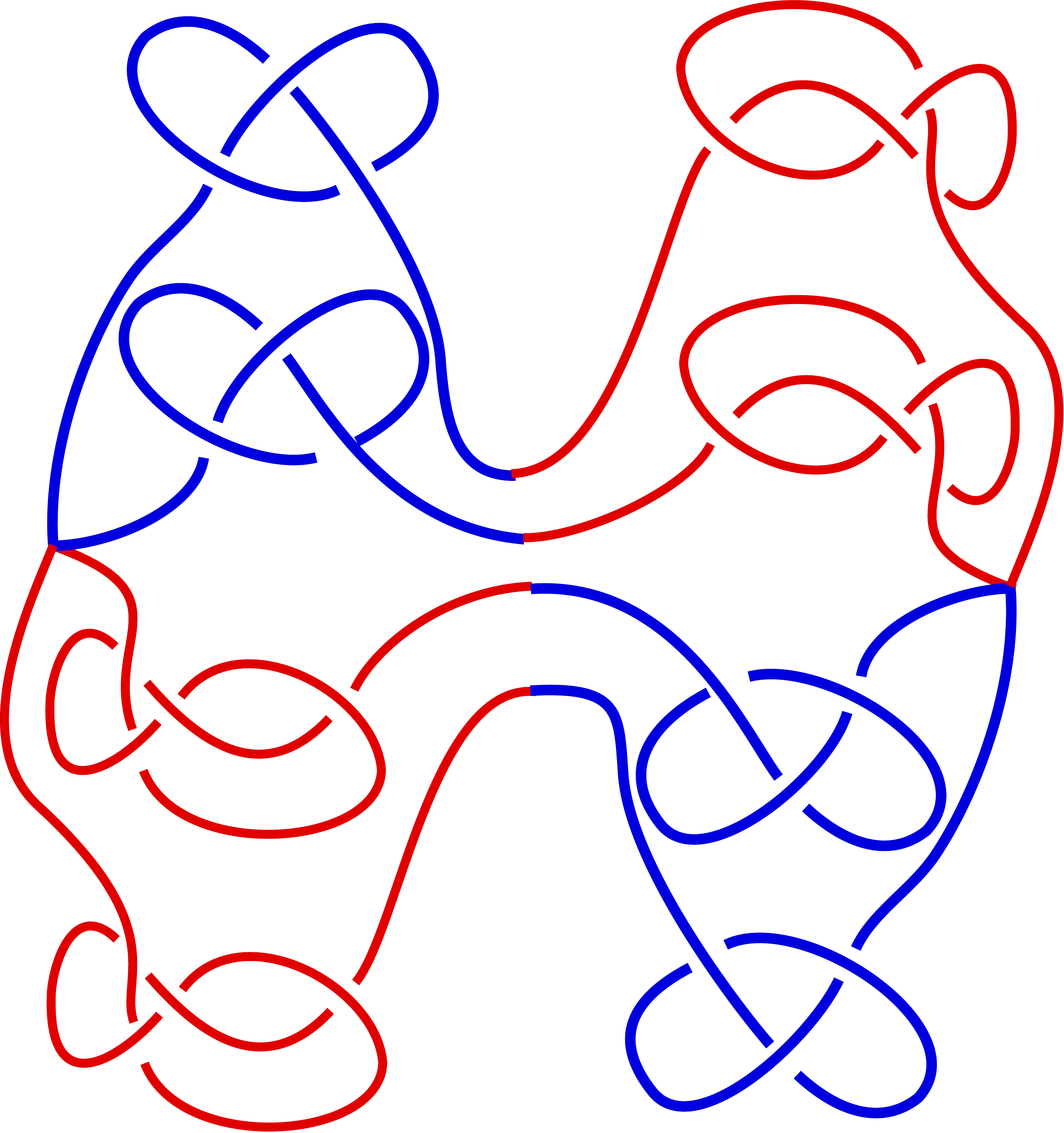}
\end{subfigure}
\caption{Two $4$-theta-curves (a) are added with the connected sum operation (b).\label{fig:oplus_sum}}
\end{figure}

\begin{figure}[h!]\centering
\labellist
\pinlabel \textbf{a)} at 20 900
\pinlabel \textbf{b)} at 1750 900
\pinlabel \textbf{c)} at 20 -430
\pinlabel \textbf{d)} at 1750 -430
\pinlabel \textbf{e)} at 20 -1830
\pinlabel \textbf{f)} at 1550 -1830
\endlabellist
\begin{subfigure}[t]{0.45\textwidth}
\vspace{-3cm}
\includegraphics[width=\linewidth]{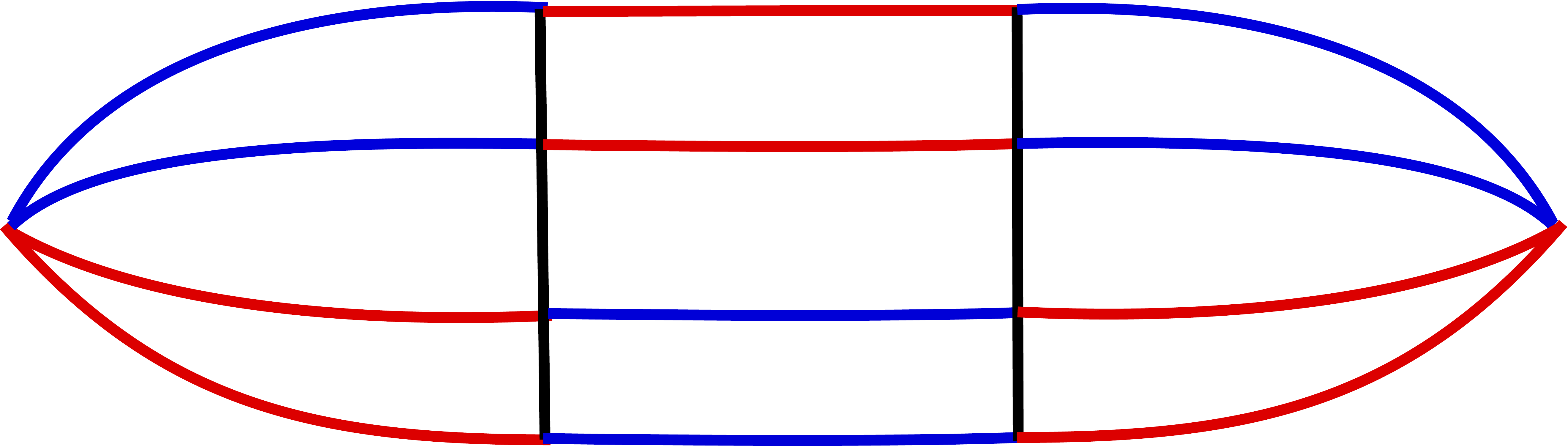}
\end{subfigure}\hfill
\begin{subfigure}[t]{0.45\textwidth}
\includegraphics[width=\linewidth]{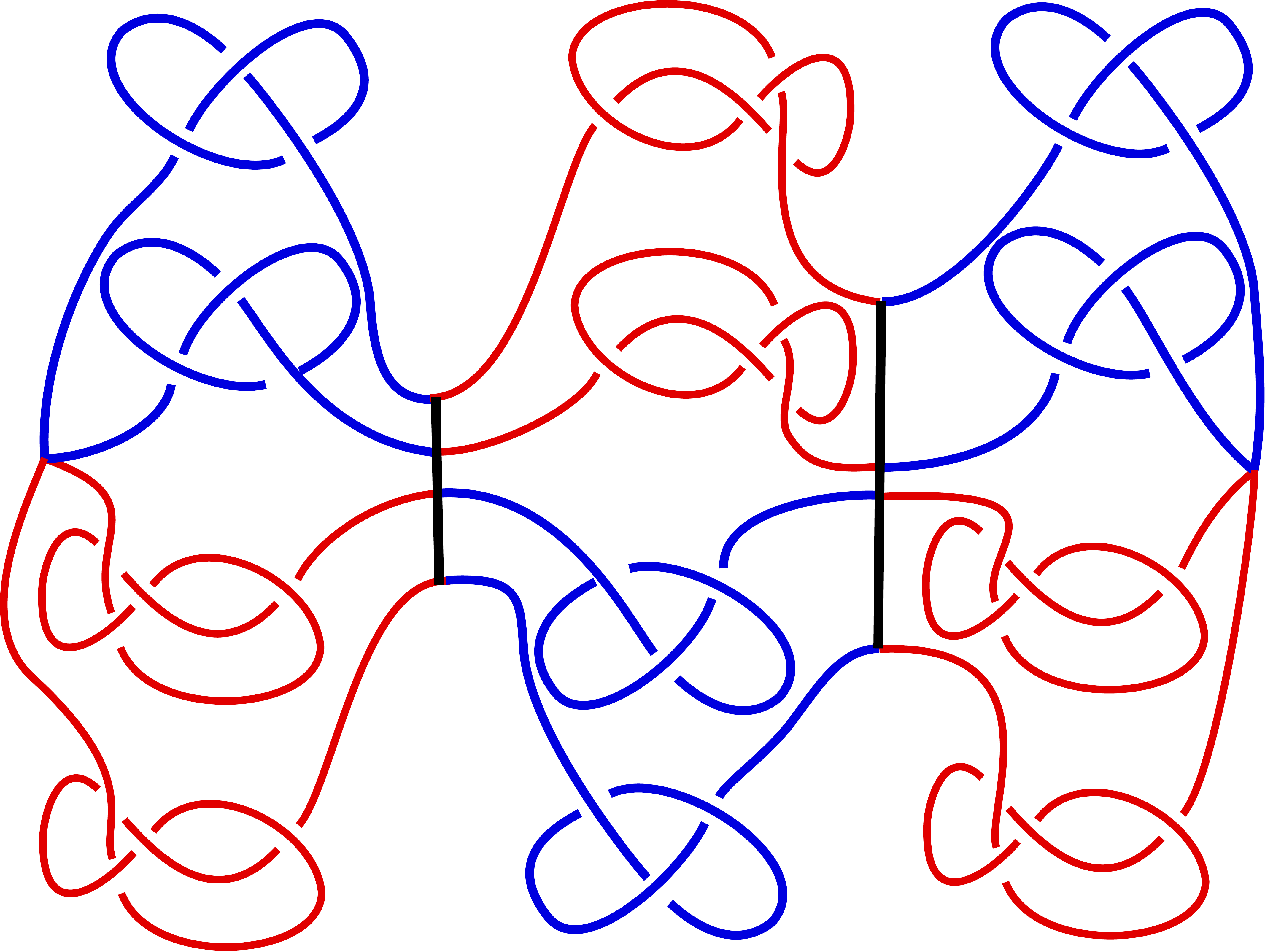}
\end{subfigure}
\begin{subfigure}[t]{0.45\textwidth}
\vspace{0.5cm}
\hspace{-0.5cm}
\includegraphics[width=\linewidth]{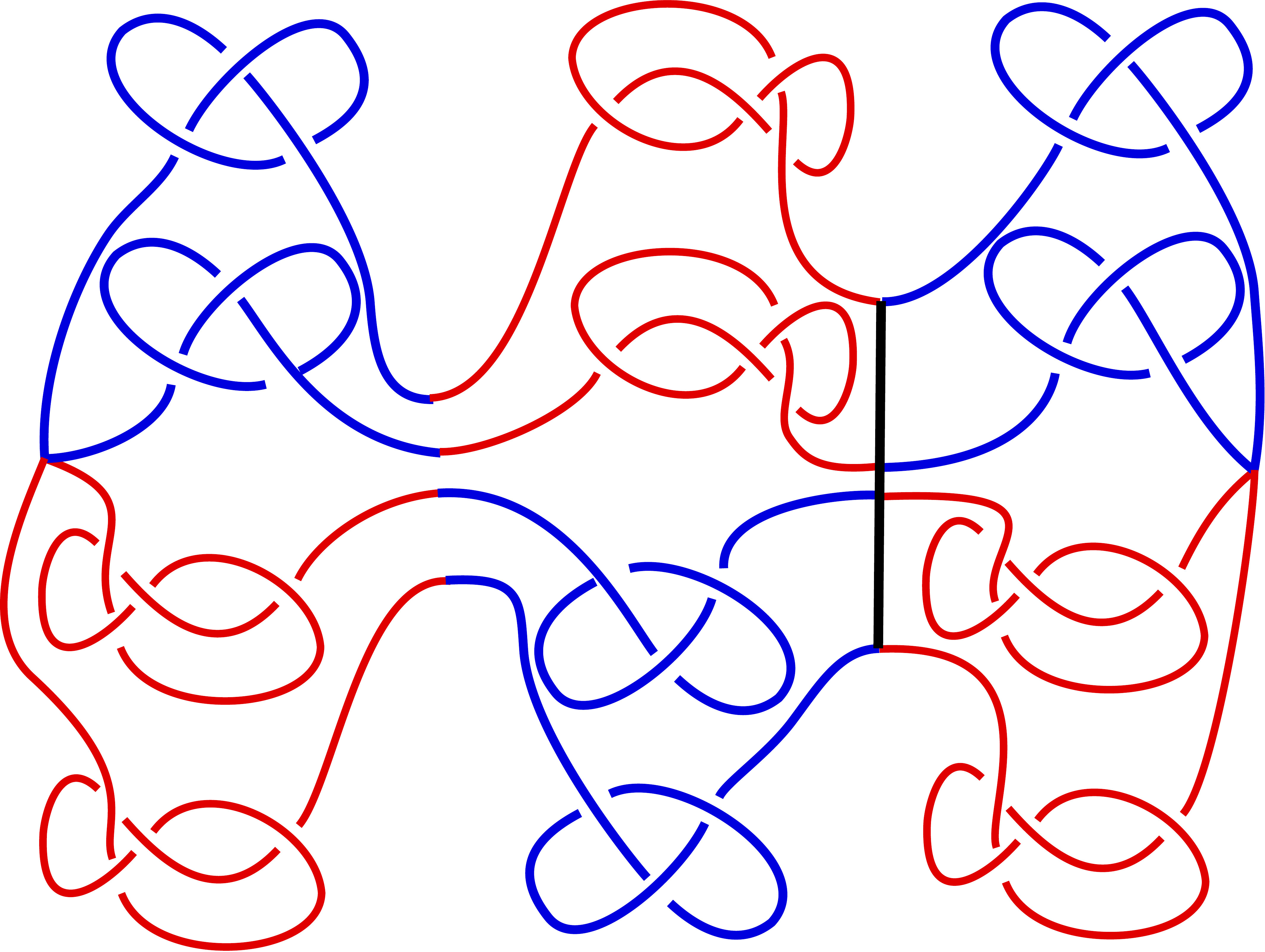}
\end{subfigure}
\begin{subfigure}[t]{0.45\textwidth}
\vspace{0.5cm}
\hspace{0.5cm}
\includegraphics[width=\linewidth]{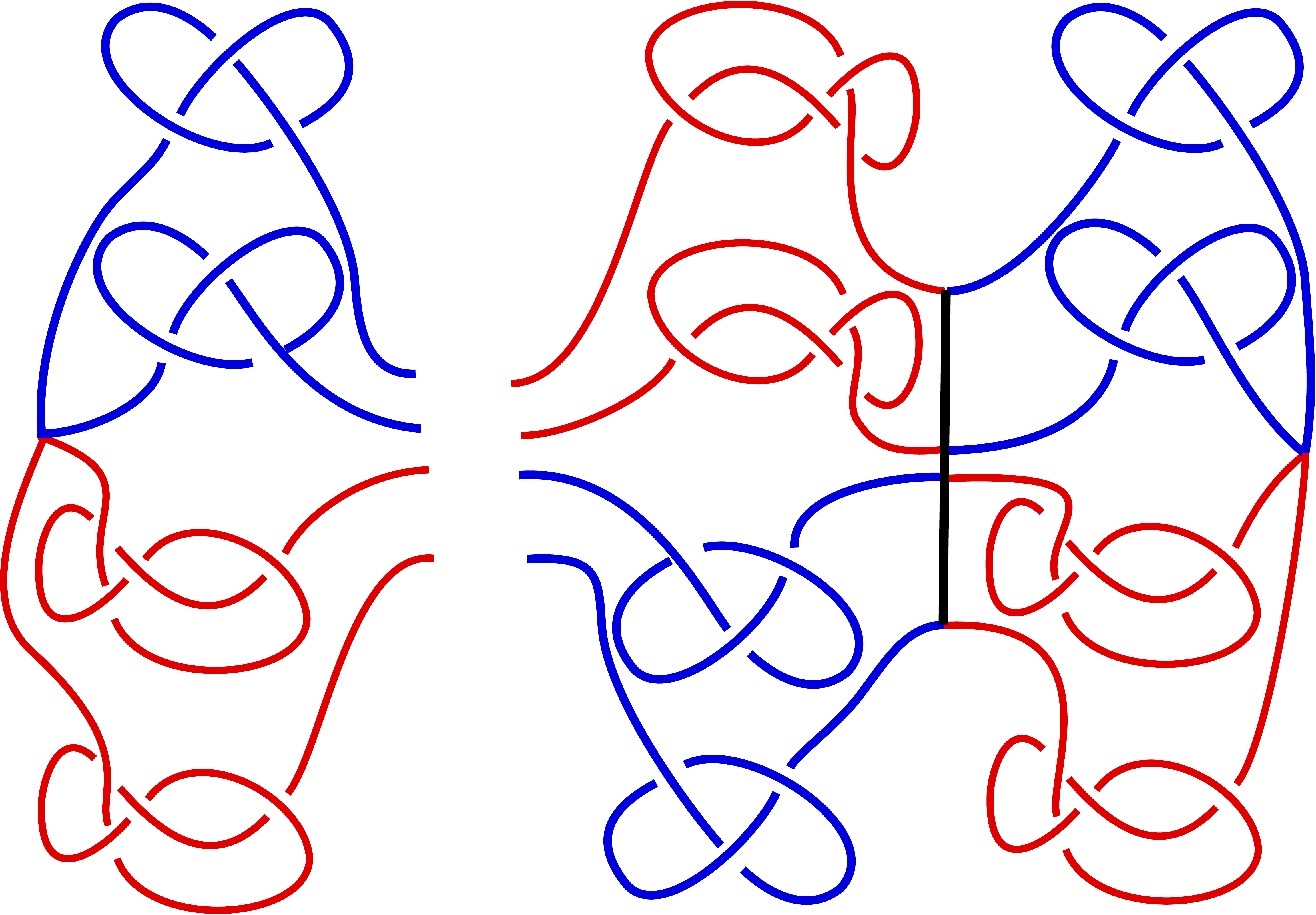}
\end{subfigure}
\begin{subfigure}[t]{0.4\textwidth}
\vspace{1.5cm}
\hspace{-0.5cm}
\includegraphics[width=\linewidth]{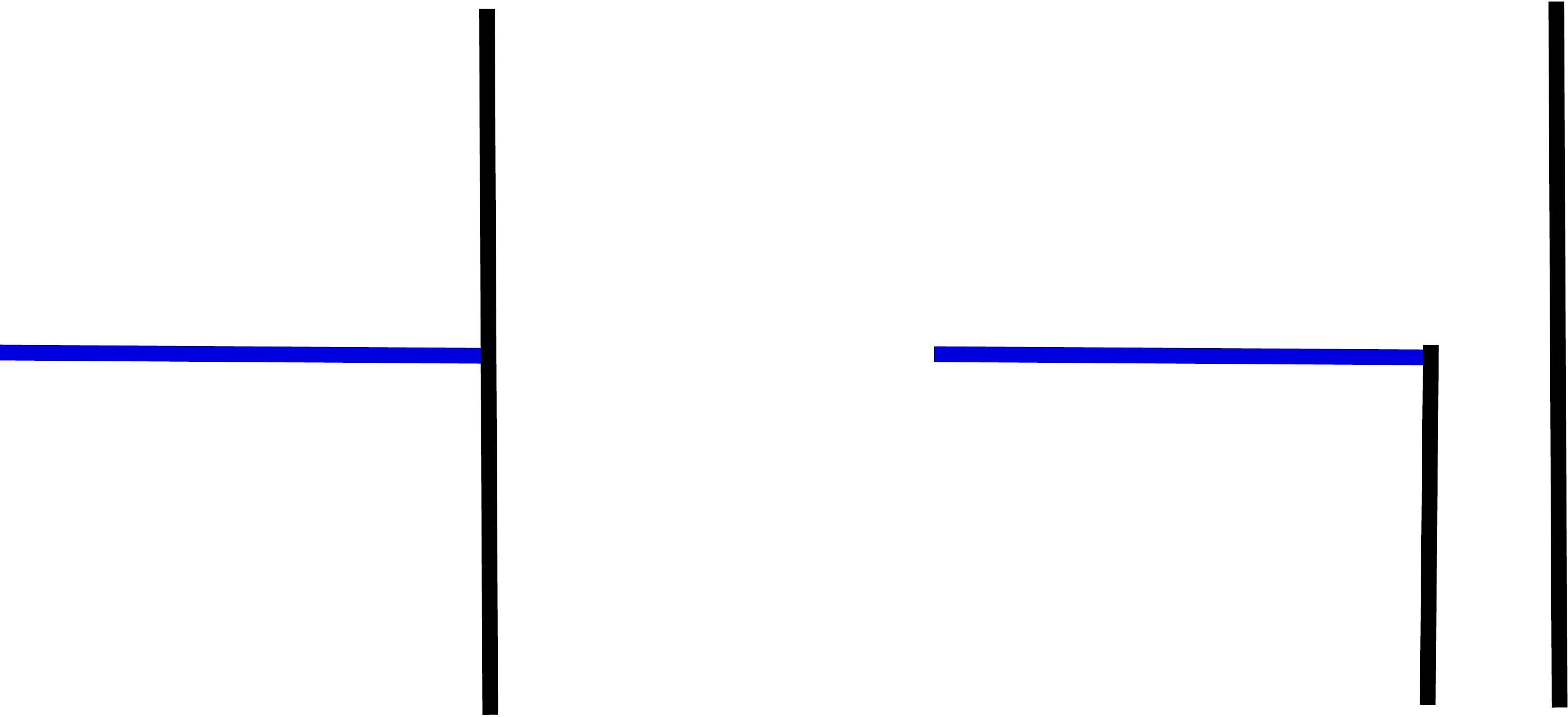}
\end{subfigure}
\begin{subfigure}[t]{0.55\textwidth}
\vspace{0.5cm}
\hspace{0.5cm}
\includegraphics[width=\linewidth]{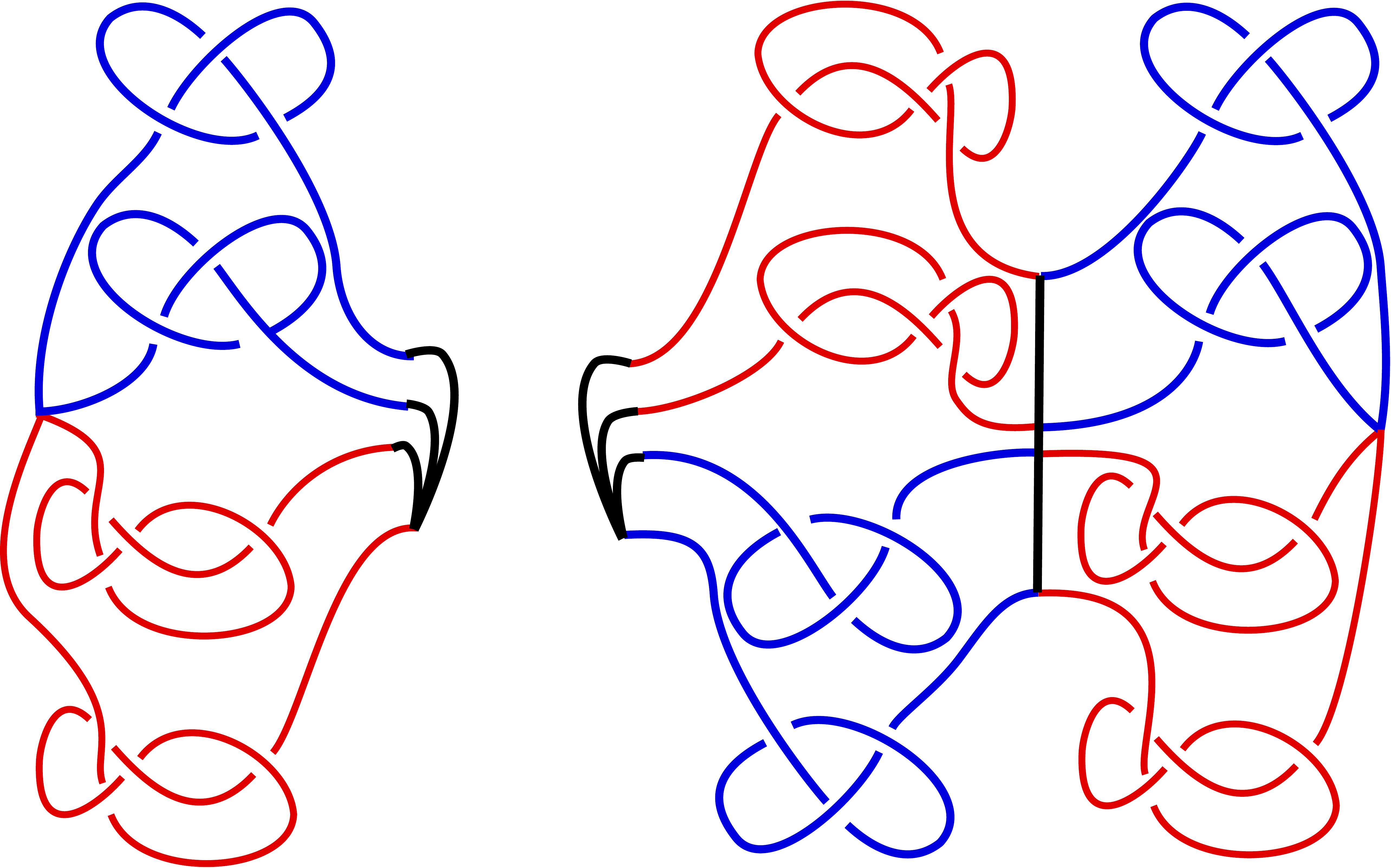}
\end{subfigure}
\caption{a) $\oplus^{2,2}$. b) $\oplus_{K_1=3_1,K_2=4_1}^{2,2}$ is an embedding of the graph $\oplus^{2,2}$. c) A diagram of $\mathrm{G}_{3_1,4_1}^{2,2,1}$. d) Cutting a diagram of $\oplus_{3_1,4_1}^{2,2}$ along the first vertical edge. e) Resolution of the nodes in c) to close d) to a diagram of $\oplus_{3_1,4_1}^{2,0}$ and $\oplus_{3_1,4_1}^{2,1}$. f) The resulting diagram of $\oplus_{3_1,4_1}^{2,0}$ and $\oplus_{3_1,4_1}^{2,1}$.\label{fig:oplus}}
\end{figure}

Let $\oplus^{n,k}$ denote the graph (as in Figure \ref{fig:oplus}a)) with $k$ vertical edges and $2n$ rows of horizontal edges.
Let $\oplus_{K_{1},K_2}^{n,k}$ denote the spatial graph that is obtained from the planar embedding of $\oplus^{n,k}$ by tying in each column $n$ of the horizontal edges into $K_1$ and the remaining $n$ horizontal edges into $K_2$, such that at every node an arc with a $K_1$ meets an arc with a $K_2$ (cf. Figure \ref{fig:oplus}b)).
We denote by $\mathrm{G}_{K_1,K_2}^{n,k,i}$ the graph (cf. Figure \ref{fig:oplus}c)) that results from $\oplus_{K_1,K_2}^{n,k}$ by deleting the $i$th vertical edge.
Note that $\oplus_{K_1\# K_2,K_1\# K_2}^{n,0}=\mathrm{G}_{K_1,K_2}^{n,1,1}$.

\begin{lemma}
\label{general}
For all positive integers $n$, $k$ and $i$ we have
\begin{equation}
\label{eq:graph}
c\left(\oplus_{K_1,K_2}^{n-i-1}\right)+c\left(\oplus_{K_1,K_2}^{n,k-i}\right)\geq
c\left(\oplus_{K_1,K_2}^{n,k}\right)\geq c\left(\mathrm{G}_{K_1,K_2}^{n,k,i}\right).
\end{equation}
Furthermore, if $c\left(\oplus_{K_1,K_2}^{n,k}\right)= c\left(\mathrm{G}_{K_1,K_2}^{n,k,i}\right)$, then 
\begin{equation}
\label{eq:cond}
c\left(\oplus_{K_1,K_2}^{n,k}\right)= c\left(\oplus_{K_1,K_2}^{n,i-1}\right)+c\left(\oplus_{K_1,K_2}^{n,k-i}\right).
\end{equation}

\end{lemma}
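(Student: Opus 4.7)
The plan is to prove the two inequalities and the conditional equality separately, each by an analog of the arguments used for classical theta-curves in Section \ref{sec:proof}.

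First, I would establish the upper bound $c(\oplus_{K_1,K_2}^{n,i-1})+c(\oplus_{K_1,K_2}^{n,k-i})\geq c(\oplus_{K_1,K_2}^{n,k})$ by an explicit construction. Take minimal diagrams $D_L$ and $D_R$ of $\oplus_{K_1,K_2}^{n,i-1}$ and $\oplus_{K_1,K_2}^{n,k-i}$, arrange them side by side in the plane, and glue them together in the connected-sum-at-a-vertex manner described at the opening of Section \ref{sec:outlook}, introducing a single new vertical edge joining the rightmost node of $D_L$ to the leftmost node of $D_R$ and matching up the $2n$ horizontal edge-ends. The resulting diagram represents $\oplus_{K_1,K_2}^{n,k}$ and has exactly $c(\oplus_{K_1,K_2}^{n,i-1})+c(\oplus_{K_1,K_2}^{n,k-i})$ crossings, which bounds $c(\oplus_{K_1,K_2}^{n,k})$ from above.

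For the second inequality $c(\oplus_{K_1,K_2}^{n,k})\geq c(\mathrm{G}_{K_1,K_2}^{n,k,i})$, simply delete the $i$th vertical edge from a minimal diagram of $\oplus_{K_1,K_2}^{n,k}$: this produces a diagram of $\mathrm{G}_{K_1,K_2}^{n,k,i}$ with no more crossings than the original. This is the direct analog of Equation (\ref{eq:1}). For the conditional statement, assume $c(\oplus_{K_1,K_2}^{n,k})=c(\mathrm{G}_{K_1,K_2}^{n,k,i})$ and let $D$ realise the former. Let $D'$ denote $D$ with the $i$th vertical edge removed. Then the chain $c(D')\leq c(D)=c(\oplus_{K_1,K_2}^{n,k})=c(\mathrm{G}_{K_1,K_2}^{n,k,i})\leq c(D')$ forces $c(D')=c(D)$, so the $i$th vertical edge has no crossings in $D$. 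The key structural observation is that in the abstract graph $\oplus^{n,k}$ this edge is a bridge whose removal yields the disjoint union $\oplus^{n,i-1}\sqcup\oplus^{n,k-i}$, so $D'$ is a diagram of $\oplus_{K_1,K_2}^{n,i-1}\sqcup\oplus_{K_1,K_2}^{n,k-i}$. Counting only the self-crossings within each of the two subdiagrams gives
\begin{equation*}
c(\oplus_{K_1,K_2}^{n,k})=c(D)=c(D')\geq c(\oplus_{K_1,K_2}^{n,i-1})+c(\oplus_{K_1,K_2}^{n,k-i}),
\end{equation*}
which, combined with the first inequality, yields the desired equality (\ref{eq:cond}).

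The main subtlety is in the last step: one must verify that the $i$th vertical edge really is a cut edge of the abstract graph $\oplus^{n,k}$ whose two resulting components are exactly $\oplus^{n,i-1}$ and $\oplus^{n,k-i}$ (including the fact that each endpoint of the removed edge has the correct residual valence to fit into its piece). Once this is in hand, any inter-component crossings in $D'$ are non-negative contributions beyond the self-crossings of the two subdiagrams and are forced to vanish in the equality case, so no separating Jordan-curve argument is needed.
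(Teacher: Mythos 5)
Your first two inequalities are fine and match the paper: the upper bound comes from forming the connected sum of minimal diagrams of $\oplus^{n,i-1}_{K_1,K_2}$ and $\oplus^{n,k-i}_{K_1,K_2}$ and reinstating the deleted vertical arc, and the lower bound from deleting the $i$th vertical edge of a minimal diagram. The genuine gap is in the conditional part, precisely at the point you flag as ``the main subtlety''. The $i$th vertical edge of $\oplus^{n,k}$ is \emph{not} a bridge whose removal leaves two components isomorphic to $\oplus^{n,i-1}$ and $\oplus^{n,k-i}$: it is a vertical path carrying $2n$ vertices (two of valency $3$ and $2(n-1)$ of valency $4$), and the horizontal edges of the two adjacent columns attach to those vertices. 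Deleting it leaves each such vertex with valency $2$, so the left-hand and right-hand horizontal edges become concatenated and $\mathrm{G}^{n,k,i}_{K_1,K_2}$ is \emph{connected}; this is exactly why the paper records $\mathrm{G}^{n,1,1}_{K_1,K_2}=\theta^{n}_{K_1\# K_2,K_1\# K_2}$ rather than a disjoint union of two $2n$-theta-curves. Consequently your $D'$ is a diagram of $\mathrm{G}^{n,k,i}_{K_1,K_2}$, not of $\oplus^{n,i-1}_{K_1,K_2}\sqcup\oplus^{n,k-i}_{K_1,K_2}$, and the step $c(D')\geq c\left(\oplus^{n,i-1}_{K_1,K_2}\right)+c\left(\oplus^{n,k-i}_{K_1,K_2}\right)$ does not follow by counting self-crossings of two subdiagrams: each half has $2n$ loose ends that still have to be merged into a single valency-$2n$ end vertex, and a priori that merging could introduce new crossings.

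The paper's proof supplies exactly this missing step. Knowing that the $i$th vertical edge is involved in no crossings, it cuts the diagram along that edge and then closes up each half by resolving the $2n$ nodes one after another, in the manner of the proof of Proposition \ref{resolve}, routing all of the joining arcs parallel to the deleted vertical edge. Because that edge was crossing-free, the parallel arcs are crossing-free as well, so each half becomes a genuine diagram of $\oplus^{n,i-1}_{K_1,K_2}$ resp.\ $\oplus^{n,k-i}_{K_1,K_2}$ with no new crossings, and the desired inequality follows. To repair your argument you need this (or an equivalent) closing construction; the assertion that no such argument is needed is where the proof breaks.
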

\begin{proof}
Equation (\ref{eq:graph}) is almost immediate. We can form the connected sum of $\oplus_{K_1,K_2}^{n-i-1}$ and $\oplus_{K_1,K_2}^{n,k-i}$ using their minimal diagrams. Since this process involves deleting a small neighbourhood of two vertices, we can add an unknotted arc to form a diagram of $\oplus_{K_1,K_2}^{n,k}$ with $c\left(\oplus_{K_1,K_2}^{n-i-1}\right)+c\left(\oplus_{K_1,K_2}^{n,k-i}\right)$ many crossings. Deleting the $i$th vertical edge in the minimal diagram of $\oplus_{K_1,K_2}^{n,k}$ results in a diagram of $\mathrm{G}_{K_1,K_2}^{n,k,i}$, which proves the inequality on the right hand side of Equation (\ref{eq:graph}).

If $c\left(\oplus_{K_1,K_2}^{n,k}\right)= c\left(\mathrm{G}_{K_1,K_2}^{n,k,i}\right)$, then the $i$th vertical edge in the minimal diagram of $\oplus_{K_1,K_2}^{n,k}$ is not involved in any crossings, neither with itself nor with any other edge of the spatial graph. Otherwise deleting the $i$th vertical edge in the minimal diagram of $\oplus_{K_1,K_2}^{n,k}$ would result in a diagram of $\mathrm{G}_{K_1,K_2}^{n,k,i}$ with strictly less than $c\left(\oplus_{K_1,K_2}^{n,k}\right)= c\left(\mathrm{G}_{K_1,K_2}^{n,k,i}\right)$ crossings. We can therefore cut $\oplus_{K_1,K_2}^{n,k}$ along the $i$th arc to obtain two spatial graphs (as in Figure \ref{fig:oplus}d)), whose open ends can be joined in one vertex without introducing any crossings.

This can be seen as follows. The $i$th vertical edge in the minimal diagram of $\oplus_{K_1,K_2}^{n,k}$ has $2n$ vertices on it, 2 of valency 3 and $2(n-1)$ of valency 4. We cut the diagram along the $i$th vertical edge and now want to connect the open ends of the remaining diagram without introducing extra crossings.
We start with one of the endpoints of the deleted edge, i.e.~one of the nodes that had valency 3 in $\oplus_{K_1,K_2}^{n,k}$. We follow the deleted $i$th vertical edge until we encounter the next node. We resolve this node as in Figure 11e) in a similar fashion to the proof of Proposition \ref{resolve}. Now we have two parallel curves that follow the deleted $i$th vertical edge until the next vertex, that also gets resolved accordingly. This process continues until all $2n-1$ parallel arcs are glued to the last remaining open end.
It is clear that this results in a diagram of $\oplus_{K_1,K_2}^{n,i-1}$ and of $\oplus_{K_1,K_2}^{n,k-i}$ as in Figure \ref{fig:oplus}f). Furthermore, this closing procedure does not lead to any new crossings, since all added arcs are parallel to the deleted $i$th vertical edge, which was not involved in any crossings.

This results in a diagram of $\oplus_{K_1,K_2}^{n,i-1}$ and of $\oplus_{K_1,K_2}^{n,k-i}$, which shows that $c\left(\oplus_{K_1,K_2}^{n,k}\right)\geq c\left(\oplus_{K_1,K_2}^{n,i-1}\right)+c\left(\oplus_{K_1,K_2}^{n,k-i}\right)$. Equation (\ref{eq:cond}) then follows from Equation (\ref{eq:graph}).  
\end{proof}

Similar arguments apply to the spatial graph $\mathrm{G}_{K_1,K_2}^{n,k,i}$ as well.
 
\begin{lemma}
For all positive integers $n$, $k$ and $i\neq (k+1)/2$ we have
\begin{equation}
\label{eq:genki}
c\left(\mathrm{G}_{K_1,K_2}^{n,k,i}\right)\leq c\left(\oplus_{K_1,K_2}^{n,\min\{i-1,k-i\}}\right)+ c\left(\mathrm{G}_{K_1,K_2}^{n,k-1-\min\{i-1,k-i\},s}\right),
\end{equation}
where
\begin{equation}
\label{eq:s}
s=\begin{cases}i &\text{ if }i-1<k-i,\\
i-(k-i)-1 &\text{ if }i-1>k-i
\end{cases}.
\end{equation}
\end{lemma}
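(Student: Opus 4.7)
The plan is to construct a diagram of $\mathrm{G}_{K_1,K_2}^{n,k,i}$ by gluing together minimal diagrams of the two pieces appearing on the right-hand side. Since the gluing is essentially the connected sum of higher-degree theta-curves as in Lemma \ref{general}, it introduces no new crossings, which then yields the desired inequality. The condition $i \neq (k+1)/2$ ensures that the decomposition is non-degenerate, with one side strictly smaller than the other.

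First I would set up the decomposition. Without loss of generality assume $i-1 < k-i$ (Case 1), so $\min\{i-1,k-i\}=i-1$ and $s=i$. The plan is to split $\mathrm{G}_{K_1,K_2}^{n,k,i}$ at the position where the missing $i$-th vertical edge used to sit, but with the convention that the \emph{larger} piece carries the deleted position in its indexing. The smaller (left) piece is a complete $\oplus_{K_1,K_2}^{n,i-1}$ containing the leftmost $i-1$ vertical edges; the larger (right) piece is $\mathrm{G}_{K_1,K_2}^{n,k-i,i}$, i.e.\ the planar embedding of $\oplus^{n,k-i}$ in which the $i$-th vertical edge (originally the missing one in $\mathrm{G}^{n,k,i}$) has been removed. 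In Case 2 the roles of the two sides are exchanged and the index $s = i-(k-i)-1$ is exactly the position of the missing edge after relabelling the remaining larger block.

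Next I would take minimal diagrams $D_{1}$ of $\oplus_{K_1,K_2}^{n,L}$ and $D_{2}$ of $\mathrm{G}_{K_1,K_2}^{n,k-1-L,s}$ and glue them together by a connected sum operation analogous to the one in Lemma \ref{general}: delete a small neighbourhood of the boundary vertex of $D_1$ on the side of the future gluing, delete a small neighbourhood of the corresponding boundary vertex of $D_2$, and pairwise join the $2n$ open arcs. As observed in the proof of Lemma \ref{general}, this procedure is entirely local and does not introduce any new crossings. Hence the combined diagram has exactly $c(D_1)+c(D_2)$ crossings.

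Finally I would verify that the combined diagram does in fact represent $\mathrm{G}_{K_1,K_2}^{n,k,i}$. This is the bookkeeping step: one checks that the resulting spatial graph has $k-1$ vertical edges whose positions are $\{1,\dots,k\}\setminus\{i\}$, and that the knot types tied into the horizontal edges agree with those prescribed by the definition of $\oplus_{K_1,K_2}^{n,k}$ with the $i$-th vertical edge removed. The choice of $s$ is forced by this requirement: it is precisely the index which, after concatenating the two pieces along the connected sum interface, lands at the original position $i$. Putting this together gives $c(\mathrm{G}_{K_1,K_2}^{n,k,i})\leq c(D_1)+c(D_2)=c(\oplus_{K_1,K_2}^{n,L})+c(\mathrm{G}_{K_1,K_2}^{n,k-1-L,s})$.

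The main obstacle is the combinatorial bookkeeping of the labels, in particular verifying that the formula for $s$ correctly lines up the missing position of $D_2$ with position $i$ of the combined diagram in both cases, rather than with some shifted position. The underlying topological step -- that connected summing along a vertex neighbourhood preserves crossing counts -- is already established in Lemma \ref{general}.
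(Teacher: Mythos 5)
Your overall strategy is the paper's: form the connected sum of minimal diagrams of $\oplus_{K_1,K_2}^{n,\min\{i-1,k-i\}}$ and $\mathrm{G}_{K_1,K_2}^{n,k-1-\min\{i-1,k-i\},s}$, note that this introduces no new crossings, and conclude. But the step you yourself flag as ``the main obstacle'' --- checking that the indices line up --- is the only content of this lemma beyond Lemma \ref{general}, and your explicit setup gets it wrong. In your Case 1 you split at the position of the missing $i$-th vertical edge, put $\oplus_{K_1,K_2}^{n,i-1}$ on the left at positions $1,\dots,i-1$, and declare the right piece to be $\mathrm{G}_{K_1,K_2}^{n,k-i,i}$ with its deleted edge ``originally the missing one''. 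That cannot happen: if the split is at position $i$, the missing position is the interface itself, so the right-hand block (positions $i+1,\dots,k$) is a \emph{complete} $\oplus_{K_1,K_2}^{n,k-i}$. This yields only $c(\mathrm{G}^{n,k,i})\leq c(\oplus^{n,i-1})+c(\oplus^{n,k-i})$, which is strictly weaker than the claim, since $c(\oplus^{n,k-i})\geq c(\mathrm{G}^{n,k-i,s})$ and not conversely. If instead you literally concatenate $\oplus^{n,i-1}$ (left), an interface arc at position $i$, and $\mathrm{G}^{n,k-i,i}$ (right), then the locally missing $i$-th line of the right block sits at global position $2i$, and you have built $\mathrm{G}^{n,k,2i}$, not $\mathrm{G}^{n,k,i}$.

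The correct arrangement in Case 1 is the mirror image of yours: $\mathrm{G}_{K_1,K_2}^{n,k-i,i}$ occupies positions $1,\dots,k-i$, so that its locally missing $i$-th line is the globally missing $i$-th line (here one uses $i\leq k-i$, which is exactly the Case 1 hypothesis); the added interface arc sits at position $k-i+1$; and $\oplus_{K_1,K_2}^{n,i-1}$ occupies positions $k-i+2,\dots,k$, i.e.\ the end of the graph \emph{away} from the missing edge. Case 2 is analogous, which is where $s=i-(k-i)-1$ comes from. A second, smaller omission: you never add the extra unknotted vertical arc at the interface (the paper does, as in Lemma \ref{general}). Without it the glued diagram has only $\min\{i-1,k-i\}+\bigl(k-2-\min\{i-1,k-i\}\bigr)=k-2$ vertical edges instead of the $k-1$ of $\mathrm{G}_{K_1,K_2}^{n,k,i}$, so it is not even the right abstract graph. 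Both defects are repairable, but as written your construction does not produce a diagram of $\mathrm{G}_{K_1,K_2}^{n,k,i}$ with the claimed number of crossings.
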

\begin{proof}
First note that the case of $i-1=k-i$ cannot occur, since then $i=(k+1)/2$. Hence $s$ is well-defined.

We can form the connected sum of $\oplus_{K_1,K_2}^{n,\min\{i-1,k-i\}}$ and $\mathrm{G}_{K_1,K_2}^{n,k-1-\min\{i-1,k-i\},s}$ using their minimal diagrams. Since the connected sum involves deleting neighbourhoods of two nodes, we can add an extra arc to obtain a diagram of $\mathrm{G}_{K_1,K_2}^{n,k,i}$ without adding any extra crossings. Therefore the minimal crossing number of $\mathrm{G}_{K_1,K_2}^{n,k,i}$ is at most
\begin{equation}
c\left(\oplus_{K_1,K_2}^{n,\min\{i-1,k-i\}}\right)+ c\left(\mathrm{G}_{K_1,K_2}^{n,k-1-\min\{i-1,k-i\},s}\right).
\end{equation}
\end{proof}
 
Furthermore, Proposition \ref{prop1} generalizes to the following statement.
\begin{proposition}
\label{rec}
If there exist positive integers $n$, $k$ and $m$ such that $c\left(\oplus_{K_1,K_2}^{n,k}\right)=c\left(\mathrm{G}_{K_1,K_2}^{n,k,i}\right)$ holds for $i=k/m$ or $i=\tfrac{m-1}{m}k+1$, then $c(K_1\# K_2)=c(K_1)+c(K_2)$.
\end{proposition}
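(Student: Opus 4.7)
Both values of $i$ permitted by the hypothesis give the same unordered pair $\{i-1,\,k-i\}=\{k/m-1,\,(m-1)k/m\}$, so Lemma~\ref{general} immediately upgrades the hypothesis to the decomposition
\[
c\bigl(\oplus_{K_1,K_2}^{n,k}\bigr) \;=\; c\bigl(\oplus_{K_1,K_2}^{n,k/m-1}\bigr) + c\bigl(\oplus_{K_1,K_2}^{n,(m-1)k/m}\bigr).
\]
This is the one concrete output of the hypothesis; the rest of the proof wrings additivity out of it by sandwiching each term between matching upper and lower bounds and forcing everything to be tight.

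The upper bound is obtained by iterating the connected-sum inequality of Lemma~\ref{general}, giving $c(\oplus_{K_1,K_2}^{n,j})\le (j+1)\,c(\theta_{K_1,K_2}^{n})$ for every $j\ge 0$. The matching lower bound comes from deleting all $k$ vertical edges of any minimal diagram of $\oplus_{K_1,K_2}^{n,k}$ and smoothing the resulting degree-two vertices: the colouring rule ``$K_1$ meets $K_2$ at every node'' forces each horizontal strand to alternate $K_1,K_2,K_1,\ldots$, so the result is a diagram of a $2n$-theta-curve $\theta^{n}_{A,B}$ with $A\#B=(K_1\#K_2)^{k+1}$; since edge deletion only removes crossings, Corollary~\ref{mixed} then yields
\[
c(\oplus_{K_1,K_2}^{n,k})\;\ge\;c(\theta_{A,B}^{n})\;\ge\;n\,c\bigl((K_1\#K_2)^{k+1}\bigr).
\]

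The finish copies the template laid out in the discussion just before Lemma~\ref{general}. In the model case $k=m=1$ the decomposition reads $c(\oplus_{K_1,K_2}^{n,1})=2c(\theta_{K_1,K_2}^{n})$ and the identification $\mathrm{G}_{K_1,K_2}^{n,1,1}=\theta_{K_1\#K_2,K_1\#K_2}^{n}$, combined with the inequalities $c(\theta_{K_1\#K_2,K_1\#K_2}^{n})\le 2nc(K_1\#K_2)$ and $c(\theta_{K_1,K_2}^{n})\ge nc(K_1\#K_2)$ (Corollary~\ref{mixed}), collapse the chain
\[
2c(\theta_{K_1,K_2}^{n})\;\ge\;2nc(K_1\#K_2)\;\ge\;c(\theta_{K_1\#K_2,K_1\#K_2}^{n})\;=\;c(\oplus_{K_1,K_2}^{n,1})
\]
to equality, whence $c(\theta_{K_1,K_2}^{n})=nc(K_1\#K_2)$ and Corollary~\ref{mixed}'s equality clause delivers $c(K_1\#K_2)=c(K_1)+c(K_2)$. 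For general $(k,m)$ the decomposition together with the summand-wise upper bound forces $c(\oplus_{K_1,K_2}^{n,k})=(k+1)c(\theta_{K_1,K_2}^{n})$, and paired with the lower bound via $(K_1\#K_2)^{k+1}$ this pinches the chain $(k+1)c(\theta_{K_1,K_2}^{n})\ge(k+1)nc(K_1\#K_2)\ge nc((K_1\#K_2)^{k+1})\ge c(\oplus_{K_1,K_2}^{n,k})$ to equality throughout, producing once again $c(\theta_{K_1,K_2}^{n})=n\,c(K_1\#K_2)$ and hence additivity via Corollary~\ref{mixed}.

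\textbf{Main obstacle.} The subtle step is running the two-sided pinch \emph{without} implicitly invoking additivity of the crossing number on the powers $(K_1\#K_2)^{r}$, which is itself an open case of the very conjecture under attack. The specific fractional splittings $i=k/m$ and $(m-1)k/m+1$ are precisely what aligns the arithmetic on both sides of the decomposition, and routing every estimate through $c(\theta_{K_1,K_2}^{n})$---rather than through a direct calculation of $c((K_1\#K_2)^{r})$---is what lets Corollary~\ref{mixed}'s equality clause supply the needed additivity of $c(K_1\#K_2)$ on its own.
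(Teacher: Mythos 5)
Your base case $k=m=1$ is correct and matches the paper's, but the argument you give for general $(k,m)$ has a genuine gap, in fact two. First, the assertion that the decomposition $c\bigl(\oplus_{K_1,K_2}^{n,k}\bigr)=c\bigl(\oplus_{K_1,K_2}^{n,i-1}\bigr)+c\bigl(\oplus_{K_1,K_2}^{n,k-i}\bigr)$ ``together with the summand-wise upper bound forces $c\bigl(\oplus_{K_1,K_2}^{n,k}\bigr)=(k+1)\,c\bigl(\theta_{K_1,K_2}^{n}\bigr)$'' does not follow: an exact decomposition combined with upper bounds $c\bigl(\oplus_{K_1,K_2}^{n,j}\bigr)\leq (j+1)c\bigl(\theta_{K_1,K_2}^{n}\bigr)$ on the two summands yields only $c\bigl(\oplus_{K_1,K_2}^{n,k}\bigr)\leq (k+1)c\bigl(\theta_{K_1,K_2}^{n}\bigr)$, which holds unconditionally and carries no information. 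Second, your closing chain ends with $n\,c\bigl((K_1\# K_2)^{k+1}\bigr)\geq c\bigl(\oplus_{K_1,K_2}^{n,k}\bigr)$, which is the \emph{reverse} of the lower bound you constructed by deleting the vertical edges; no upper bound of that form is available, so the chain does not close into a pinch and nothing is forced to be an equality. (Your worry in the final paragraph about smuggling in additivity of $c$ on powers of $K_1\# K_2$ is well placed, but the argument as written fails before that issue even arises.) Note also that your observation that both admissible values of $i$ give the same unordered pair $\{i-1,k-i\}$ is true but misleading: the two values feed different indices $s$ into Equation (\ref{eq:genki}) and therefore drive different reductions.

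The paper closes this gap not by a direct pinch but by a descent on the pair $(k,i)$. Combining the equality clause of Lemma \ref{general} with the inequality (\ref{eq:genki}) and cancelling $c\bigl(\oplus_{K_1,K_2}^{n,\min\{i-1,k-i\}}\bigr)$ yields $c\bigl(\oplus_{K_1,K_2}^{n,k'}\bigr)=c\bigl(\mathrm{G}_{K_1,K_2}^{n,k',i'}\bigr)$ for the new parameters $k'=k-1-\min\{i-1,k-i\}$ and $i'=s$; the arithmetic of $i=k/m$ (respectively $i=\tfrac{m-1}{m}k+1$) shows that the new pair again satisfies the hypothesis with $m$ replaced by $m-1$, so iterating terminates at $i=k$ (respectively $i=1$), and a further iteration of the same cancellation brings $k$ down to $1$, where your $k=m=1$ computation applies. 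You would need to supply this descent (or some substitute for it) to handle any case beyond $k=m=1$; as it stands even $m=1$ with $k>1$ is not covered by your argument.
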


\begin{proof}
We start with $m=1$, so $i=1$ or $i=k$. We assume that $i=1$. The case of $i=k$ can be proven analogously. By Lemma \ref{general} $c\left(\oplus_{K_1,K_2}^{n,k}\right)=c\left(\mathrm{G}_{K_1,K_2}^{n,k,1}\right)$ implies that \begin{equation}
c(\oplus_{K_1,K_2}^{n,k})=c(\oplus_{K_1,K_2}^{n,0})+c(\oplus_{K_1,K_2}^{n,k-1})=c(\mathrm{G}_{K_1,K_2}^{n,k,1}).
\end{equation}
Using Equation (\ref{eq:genki}) with $i=1$,
\begin{equation}
c\left(\mathrm{G}_{K_1,K_2}^{n,k,1}\right)\leq c\left(\oplus_{K_1,K_2}^{n,0}\right)+c\left(\mathrm{G}_{K_1,K_2}^{n,k-1,1}\right), 
\end{equation}
we get 
\begin{equation}
c\left(\oplus_{K_1,K_2}^{n,k-1}\right)\leq c\left(\mathrm{G}_{K_1,K_2}^{n,k-1,1}\right),
\end{equation}
which by Lemma \ref{general} implies
\begin{equation}
c\left(\oplus_{K_1,K_2}^{n,k-1}\right)=c\left(\mathrm{G}_{K_1,K_2}^{n,k-1,1}\right).
\end{equation}
We have just shown that if $c\left(\oplus_{K_1,K_2}^{n,k}\right)=c\left(\mathrm{G}_{K_1,K_2}^{n,k,1}\right)$, then the same equality holds for $k-1$.
Iterating this process shows that
\begin{equation}
\label{eq:11}
c\left(\oplus_{K_1,K_2}^{n,1}\right)=c\left(\mathrm{G}_{K_1,K_2}^{n,1,1}\right)=2c\left(\oplus_{K_1 ,K_2}^{n,0}\right). 
\end{equation}
Note that $\mathrm{G}_{K_1,K_2}^{n,1,1}=\theta_{K_1\# K_2,K_1\# K_2}^n$, so in particular 
\begin{equation}
c\left(\mathrm{G}_{K_1,K_2}^{n,1,1}\right)\leq 2nc(K_1\# K_2).
\end{equation}
Using Equation (\ref{eq:11}) we obtain 
\begin{equation}
c\left(\oplus_{K_1,K_2}^{n,0}\right)\leq nc(K_1\# K_2).
\end{equation}
Note that $\oplus_{K_1,K_2}^{n,0}=\theta_{K_1,K_2}^n$ and $c(\theta_{K_1,K_2}^n)\geq nc(K_1\# K_2)$ (by Corollary \ref{mixed}) and thus we have $c(\theta_{K_1,K_2}^n)= nc(K_1\# K_2)$, which by Corollary \ref{mixed} implies that $c(K_1\# K_2)=c(K_1)+c(K_2)$.



Now we assume that we have $c\left(\oplus_{K_1,K_2}^{n,k}\right)=c\left(\mathrm{G}_{K_1,K_2}^{n,k,i}\right)$ with $i=k/m$ or $i=\tfrac{m-1}{m}k+1$ for some $m>1$. In particular, $i\neq \tfrac{k+1}{2}$.

It follows again from Lemma \ref{general} that
\begin{equation}\label{eq:bcd}
c\left(\oplus_{K_1,K_2}^{n,k}\right)=c\left(\oplus_{K_1,K_2}^{n,i-1}\right)+c\left(\oplus_{K_1,K_2}^{n,k-i}\right)=c\left(\mathrm{G}_{K_1,K_2}^{n,k,i}\right).
\end{equation}

Combining Equation (\ref{eq:bcd}) and Equation (\ref{eq:genki}) gives
\begin{align}
c\left(\oplus_{K_1,K_2}^{n,\min\{i-1,k-i\}}\right)&+c\left(\oplus_{K_1,K_2}^{n,\max\{i-1,k-i\}}\right)=c\left(\oplus_{K_1,K_2}^{n,i-1}\right)+c\left(\oplus_{K_1,K_2}^{n,k-i}\right)\nonumber\\
&\leq c\left(\oplus_{K_1,K_2}^{n,\min\{i-1,k-i\}}\right)+c\left(\mathrm{G}_{K_1,K_2}^{n,k-1-\min\{i-1,k-i\},s}\right),
\end{align}
with $s$ as in Equation (\ref{eq:s}). 

Canceling $c\left(\oplus_{K_1,K_2}^{n,\min\{i-1,k-i\}}\right)$ leaves us with
\begin{equation}
c\left(\oplus_{K_1,K_2}^{n,\max\{i-1,k-i\}}\right)\leq c\left(\mathrm{G}_{K_1,K_2}^{n,k-1-\min\{i-1,k-i\},s}\right),
\end{equation}
which implies
\begin{equation}
c\left(\oplus_{K_1,K_2}^{n,\max\{i-1,k-i\}}\right)= c\left(\oplus_{K_1,K_2}^{n,k-1-\min\{i-1,k-i\}}\right)=c\left(\mathrm{G}_{K_1,K_2}^{n,k-1-\min\{i-1,k-i\},s}\right),
\end{equation}
since $\max\{i-1,k-i\}=k-1-\min\{i-1,k-i\}$. This means we have another set of positive integers $(n,k',i')=(n,k-1-\min\{i-1,k-i\},s)$ with $c\left(\oplus_{K_1,K_2}^{n,k'}\right)=c\left(\mathrm{G}_{K_1,K_2}^{n,k',i'}\right)$. 

If $i=k/m$, then $i-1<k-i$ and we find that $s=k/m$ and $k'=k-i=\tfrac{m-1}{m}k$ and hence $s=k'/(m-1)$. Repeating this process, we obtain $c\left(\oplus_{K_1,K_2}^{n,\tilde{k}}\right)=c\left(\mathrm{G}_{K_1,K_2}^{n,\tilde{k},\tilde{i}}\right)$ for some $\tilde{i}=\tilde{k}$, which by the remarks above implies that $c(K_1\# K_2)=c(K_1)+c(K_2)$.

If $i=\tfrac{m-1}{m}k+1$, then $i-1>k-i$ and we obtain $s=2i-k-1=\tfrac{m-2}{m}k+1$ and $k'=i-1=\tfrac{m-1}{m}k$. Therefore $s=\tfrac{m-2}{m-1}k'+1$. Repeating this process, we obtain $c\left(\oplus_{K_1,K_2}^{n,\tilde{k}}\right)=c\left(\mathrm{G}_{K_1,K_2}^{n,\tilde{k},\tilde{i}}\right)$ for some $\tilde{k}$ and $\tilde{i}=1$, which again implies $c(K_1\# K_2)=c(K_1)+c(K_2)$.
\end{proof}

At the moment it seems unlikely that one could solve the crossing number conjecture by finding values for $n$, $k$ and $i$ for which the condition in Proposition \ref{rec} is satisfied. It is more promising to aim for a pure existence statement. This is of course highly speculative, but the hope is that the situation becomes similar to the one in Section \ref{sec:higher}, where it is very hard for a given $n$ to decide whether $c(\theta_{K_1,K_2}^n)=n(c(K_1)+c(K_2))$, but we know that if $n$ is large enough, then the equality is satisfied.

There are multiple other ways that one could extend the results outlined here to other types of graphs, all of which seem to give some inequalities and conditional results. It is a part of ongoing research, whether the results obtained by studying some of these graphs actually give us something new, something that we can not find by studying higher degree theta-curves.

Throughout this article we have worked under the assumption that $K_1$ and $K_2$ are prime. Many of the stated results remain true if we drop this assumption. Notably, for large enough $n$ the minimal crossing number of $\theta_{K_1,K_2}^n$ is equal to $n(c(K_1)+c(K_2))$. 
The definition of $\Omega_{K_1,K_2}^n$ has to be slightly adjusted. In particular, $x_i\cup x_j$ and $z_i\cup z_j$ are not allowed to be of the form $K\# K$, if $K$ is any summand of $K_1\# K_2\# K_1\# K_2$ other than $K_1$ or $K_2$. With this definition we again obtain that for large enough $n$ the crossing number satisfies $c(\Omega_{K_1,K_2}^n)=n(c(K_1)+c(K_2))$.

The results from Section \ref{sec:double} also remain largely true. Since the signs in the construction of $\tilde{D}$ can be chosen in such a way that $x_i\cup x_j$ and $z_i\cup z_j$ are always either a trefoil or the Whitehead double of a non-trivial knot (all of which have genus 1 and are therefore prime), $\tilde{D}$ is the diagram of a higher degree theta-curve in $c(\Omega_{K_1,K_2}^n)$. Thus we again have $c(K_1\# K_2)\geq \frac{1}{n^2} c(\Omega_{K_1,K_2}^n)$ for all $n$.

One difference in the setting of composite summands is that $\Gamma(\tilde{D})$ could have triangles even if $c(K_1\# K_2)\neq c(K_1)+c(K_2)$. Namely, there could be some prime summand $K$ of $K_1\# K_2$ such that $c(K_1\# K_2)=c(K)+c(K')$, where $K_1\# K_2=K\# K'$.

Thus any lower bound for $c(K_1\# K_2)$ that is obtained by finding $n$ such that $c(\Omega_{K_1,K_2}^n)=n(c(K_1)+c(K_2))$ does not relate the crossing number of a composite knot to the crossing numbers of its prime summands, but rather the crossing numbers of some decomposition into two summands, $K$ and $K'$.  

\ \\
\textbf{Acknowledgments:}\\
The author is grateful to Mark Dennis, Mikami Hirasawa, Jonathan Robbins, Kouki Taniyama and De Witt Sumners for valuable discussions and comments. This work is funded by the Leverhulme Trust Research Programme Grant RP2013- K-009, SPOCK: Scientific Properties Of Complex Knots.
\ \\
Conflict of interests: none.

\end{document}